\newtheorem{theorem}{Theorem}         
\newtheorem{proposition}[theorem]{Proposition}
\newtheorem{cor}[theorem]{Corollary}
\newtheorem{lemma}[theorem]{Lemma}
\newtheorem{definition}[theorem]{Definition}
\theoremstyle{definition}
\newtheorem{remark}[theorem]{Remark}
\newtheorem*{rmk*}{Remark}
\newtheorem*{Ex1*}{Example 1}
\newtheorem*{Ex2*}{Example 2}
\newtheorem*{Ex3*}{Example 3}
\newtheorem*{Ex4*}{Example 4}
\newtheorem*{Ex5*}{Example 5}
\newtheorem*{Ex6*}{Example 6}
\numberwithin{equation}{section}
\newcommand{\N}{\mathbb N}
\newcommand{\Q}{\mathbb Q}
\newcommand{\R}{\mathbb R}
\newcommand{\Z}{\mathbb Z}
\renewcommand{\H}{\mathbb H}
\newcommand{\e}{E}
\renewcommand{\b}{B}
\newcommand{\PP}{\mathscr P}
\newcommand{\SSS}{\mathscr S}
\newcommand{\X}{\mathscr X}
\newcommand{\TT}{\mathscr T}
\newcommand{\FFF}{\mathscr F}
\newcommand{\WW}{\mathscr W}
\newcommand{\NN}{\mathscr N}
\newcommand{\RRR}{\mathscr R}
\newcommand{\AAA}{\mathscr A}
\newcommand{\Te}{T_E}
\newcommand{\oTe}{\overline{T}_{E}}
\newcommand{\Tb}{T_B}
\newcommand{\oTb}{\overline{T}_{B}}
\newcommand{\ep}{\operatorname{eper}(\omega)}
\newcommand{\llangle}{\langle\! \langle}
\newcommand{\rrangle}{\rangle\! \rangle}
\newcommand{\llb}{[\! [}
\newcommand{\rrb}{]\! ]}
\newcommand{\wTheta}{\widetilde{\Theta}}
\title[Periodic points of certain Gauss shifts with infinite invariant measure]{Distribution of periodic points of certain Gauss shifts with infinite invariant measure}
\author{Florin P. Boca and Maria Siskaki}
\address{Department of Mathematics, University of Illinois at Urbana-Champaign,
Urbana, IL 61801}
\address{E-mail: fboca@illinois.edu \quad siskaki2@illinois.edu}
\subjclass{37A44 (primary), 11J70, 11N37, 37D40 (secondary).}
\keywords{Gauss shift, even continued fraction, backward continued fraction, reduced quadratic irrationals, Pell equation, equidistribution.}
\date{\today}
\begin{document}


\begin{abstract}
This paper investigates the periodic points of the Gauss type shifts associated to the
even continued fraction (Schweiger) and to the backward continued fraction (R\' enyi).
We show that they coincide exactly with two sets of quadratic irrationals that we call
$E$-reduced, and respectively $B$-reduced. We prove that these numbers are
equidistributed with respect to the (infinite) Lebesgue absolutely continuous
invariant measures of the corresponding Gauss shift.
\end{abstract}

\maketitle

\section{Introduction}
Euclidean algorithms and their associated continued fraction expansions generate interesting examples of
(non-invertible) measure preserving transformations, called Gauss shifts.
The best known is the Gauss map
\begin{equation*}
T:[0,1) \longrightarrow [0,1),\quad T(x):=\bigg\{ \frac{1}{x}\bigg\} =\frac{1}{x}-\bigg\lfloor \frac{1}{x}\bigg\rfloor\ \
\mbox{\rm if $x\neq 0$}, \qquad T(0):=0,
\end{equation*}
associated with the RCF (regular continued fraction) expansion
\begin{equation*}
x=[a_1,a_2,\ldots]:=\cfrac{1}{a_1+\cfrac{1}{a_2+\cfrac{1}{\ddots}}}\, ,\qquad a_i \in \N .
\end{equation*}
On such expansions $T$ acts as a one-sided shift
\begin{equation}\label{eq1.1}
T([a_1,a_2,\ldots ])=[a_2,a_3,\ldots ].
\end{equation}
The digits of $x$ are fully recaptured by the $T$-iterates of $x$ as $a_1 =\big\lfloor \frac{1}{x}\big\rfloor$,
$a_{n+1}=\big\lfloor \frac{1}{T^n (x)}\big\rfloor$, $n\geq 1$. It was discovered by Gauss that the probability
measure $\mu_G:=\frac{dx}{(1+x)\log 2}$ is $T$-invariant. The endomorphism $T$ is exact in the sense of Rohlin.
The measure $\mu_G$ is the unique Lebesgue absolutely continuous $T$-invariant probability measure. For a comprehensive
presentation of the ergodic properties of $T$ we refer to \cite{KMS}.

Equality \eqref{eq1.1} shows that the periodic points of $T$ are precisely the \emph{reduced} quadratic irrationals
in $[0,1)$, i.e. the numbers with periodic RCF-representation $\omega =[\, \overline{a_1,\ldots,a_n}\,]$.
These are known to coincide with the \emph{QIs} (quadratic irrationals) $\omega \in [0,1)$ with conjugate $\omega^* \in (-\infty,-1]$.
An important connection between $\omega$ and $\omega^*$ is provided by the Galois formula (\cite{Ga})
\begin{equation}\label{eq1.2}
[\,\overline{a_1,\ldots ,a_n}\,]^* =-\frac{1}{[\,\overline{a_n,\ldots,a_1}\,]} \, .
\end{equation}
Reduced QIs are naturally ordered by their \emph{length}
\begin{equation*}
\varrho (\omega):=2\log \epsilon_0 (\omega) ,
\end{equation*}
where $\epsilon_0(\omega)=\epsilon_\Delta = \frac{1}{2}(u_0+v_0\sqrt{\Delta})$ is the fundamental solution of the Pell equation
$u^2 -\Delta v^2 =4$, with $\Delta:=\operatorname{disc}(\omega)$. Geometrically, $\varrho (\omega)$ measures the length of the closed primitive
geodesic on the modular surface ${\mathscr M}=\operatorname{SL}(2,\Z)\backslash {\mathbb H}$, which has a lift to ${\mathbb H}$ with endpoints at
$\omega^{-1}=[\, \overline{a_1,\ldots ,a_n}\, ]^{-1}$ and $\omega^* =-[\, \overline{a_n,\ldots ,a_1}\,]$. More concretely,
if we set
\begin{equation}\label{eq1.3}
\Omega (\omega):=\left( \begin{matrix} a_1 & 1 \\ 1 & 0 \end{matrix}\right) \cdots\left( \begin{matrix} a_n & 1 \\ 1 & 0 \end{matrix} \right) ,
\qquad \widetilde{\Omega}(\omega):=\begin{cases}
\Omega(\omega) & \mbox{\rm if $n$ even} \\ \Omega(\omega)^2 & \mbox{\rm if $n$ odd,}
\end{cases}
\end{equation}
and denote by ${\mathfrak r}(\sigma)$ the spectral radius of a $2\times 2$ matrix $\sigma$, then
\begin{equation}\label{eq1.4}
\varrho (\omega) =2\log {\mathfrak r}(\widetilde{\Omega}(\omega)).
\end{equation}
It is well known (see, e.g., \cite{Fa}) that, in the upper half-plane $\H$, for every
$\sigma \in \operatorname{SL}(2,\R)$ and $z\in\H$ on the axis of $\sigma$,
\begin{equation*}
d(z,\sigma z)=2\log {\mathfrak r}(\sigma).
\end{equation*}

Employing Mayer's thermodynamic formalism for the Gauss shift $T$ (\cite{Ma}) and the Series coding of geodesics on
the modular surface ${\mathscr M}$ (\cite{Se}),
Pollicott proved (\cite{Po}, see also Faivre's ensuing work \cite{Fa}) that the periodic points of $T$ are equidistributed with respect to the Gauss
measure $\mu_G$, and also that closed geodesics on
$\mathscr M$ are uniformly distributed when ordered by length.
More recently, Kelmer proved a more general result (\cite{Ke}) about closed geodesics with prescribed linking
number and the uniform distribution on $[0,1)^2$ of the periodic points of the
(invertible) natural extension $\widetilde{T}$ of $T$, with respect to the $\widetilde{T}$-invariant probability measure $\widetilde{\mu}_G := \frac{dx dy}{(xy+1)^2 \log 2}$.
These proofs rely on the spectral analysis of the
nuclear Perron-Frobenius operator associated to $T$, acting on the disk algebra $A(\{ \lvert z-1\rvert \leq \frac{3}{2} \})$,
and ultimately on an application of the Wiener-Ikehara tauberian theorem, which does not lead to effective estimates for the error term
in the final asymptotic formula. Effective asymptotic results from applications of transfer operators have very recently emerged in the study of the additive cost of moderate growth of
reduced QIs (\cite{CV}), and respectively of the average of word lengths of closed geodesics on negatively curved surfaces (\cite{CP}).

When ordering by discriminant, a powerful number theoretical result of Duke (\cite{Du}) shows that the collection of closed geodesics with
the same discriminant $\Delta >0$, and hence with the same length $2\log \epsilon_\Delta$, are equidistributed in
${\mathscr M}$.

A more direct number theoretical approach for estimating the number of periodic points of $T$, initiated in \cite{KO}, was further sharpened
by one of the authors (\cite{Bo}), followed by work of Ustinov \cite{Us} (see also the Appendix to \cite{He}).
The approach from \cite{Bo} and \cite{Us} relies essentially on applications of the Weil bound for Kloosterman sums.
In that setting, the problem was reduced to deriving an asymptotic formula for the number $S(\alpha,\beta;N)$ of matrices
$\Big( \begin{smallmatrix} p & p^\prime \\ q & q^\prime \end{smallmatrix}\Big) \in \operatorname{SL}(2,\Z)$ subject to the inequalities
$\alpha q^\prime \geq p^\prime >p>0$, $\beta q^\prime \geq q> p$, and $p +q^\prime \leq N$,
where $\alpha,\beta \in [0,1]$ are fixed and $N\rightarrow \infty$. The following effective estimate was proved in \cite{Us}:
\begin{equation*}
\begin{split}
S(\alpha,\beta;N) & =\frac{\log (\alpha\beta+1)}{2\zeta(2)}\, N^2 + O_\varepsilon (N^{3/2+\varepsilon}) \\ & =
\frac{N^2}{2\zeta(2)} \iint_{[0,\alpha]\times [0,\beta]} \frac{dx\, dy}{(xy+1)^2} +O_\varepsilon (N^{3/2+\varepsilon}) ,
\qquad \forall \varepsilon >0.
\end{split}
\end{equation*}
Elementary considerations (\cite{KO,Bo,Us}) then lead to the estimate
\begin{equation}
\sum\limits_{\substack{\omega \, \text{\rm reduced Q.I.}\\ \varrho (\omega) \leq R \\ 0\leq \omega \leq \alpha \\ 0\leq -1/\omega^* \leq \beta}} 1 =
\frac {e^R}{2\zeta(2)} \iint_{[0,\alpha]\times [0,\beta]} \frac{dx\, dy}{(xy+1)^2}
+O_\varepsilon (e^{(3/4+\varepsilon)R}),
\end{equation}
showing that the periodic points of the natural extension of $T$ are $\widetilde{\mu}_G$-equidistributed.

In Vall\' ee's  classification of Euclidean algorithms (\cite{V1,V2}), the MSB (most significant bits) class is given special attention.
There are six CF (continued fraction) MSB algorithms, denoted by (G), (M), (K), (E), (O), (T). In our terminology they are:
(G) $\longleftrightarrow$ RCF (regular CF), (M) $\longleftrightarrow$ BCF (backward CF), (K) $\longleftrightarrow$ NICF (nearest integer CF),
(E) $\longleftrightarrow$ ECF (even CF), (O) $\longleftrightarrow$ OCF (odd CF), (T) $\longleftrightarrow$ LCF (Lehner CF).
The Gauss map corresponding to type (T) is given by $V(x):=\frac{x}{1-x}$ if $x\in [0,\frac{1}{2}]$ and $V(x):=
\frac{1-x}{x}$ if $x\in [\frac{1}{2},1]$. This is the familiar Farey map on $[0,1]$. It was observed in \cite{DK} that conjugating by
$x\mapsto x+1$ one gets the Gauss map of the Lehner CF on the interval $[1,2]$, which involves only the digits $(1,+1)$ and $(2,-1)$ (see also \cite{Me} for
a geometric approach).
The algorithms (M), (E) and (T) are ``slow" and belong to the ``Bad Class" (see Section 2.5 of \cite{V2}). Incidentally,
their associated Gauss shifts have infinite invariant measure, which makes a Perron-Frobenius operator approach as in \cite{BaV,He,Ke,Po} more challenging.

The analogue of Pollicott's problem for the Farey map has been already thoroughly studied by Heersink (\cite{He}, see also \cite{PU} for a broader scenery).
Building on the approach from \cite{Po} and \cite{Fa}, the equidistribution of the periodic
points of the Farey map, and also of its natural extension, with respect to their (infinite) invariant measures have been established in \cite{He}.

In this paper we investigate the distribution of the periodic points of the Gauss shifts $T_E$ in situation (E) and $T_B$ in the situation (M).
Our results show that, when ordered by appropriate lengths $\varrho_E$ and respectively $\varrho_B$, these
subsets of QIs are equidistributed in an effective manner with respect to the Lebesgue absolutely continuous measure of
the corresponding Gauss shift. In fact, Theorems 1 and 4 below show that the periodic points of the natural extensions of
these maps are equidistributed with respect to their invariant measure.
With the purpose of stating these results, we start with a summary of definitions and properties
of the shifts $T_E$ and $T_B$, and of their periodic points.
Here, we prefer to work with ECF and BCF-expansions of
numbers in $[1,\infty)\setminus \Q$ rather than $[0,1]\setminus \Q$. Definitions and results can be easily formulated on
$[0,1]$ by conjugating by $x\mapsto \frac{1}{x}$. An operator theoretical approach appears to be complicated, a first difficulty being to find
an appropriate invariant space of analytic functions under the corresponding Perron-Frobenius operator. Our approach is number theoretical and
ultimately relies on the Weil bound for Kloosterman sums.

Every irrational number $u>1$ has a unique ECF-expansion
\begin{equation}\label{eq1.6}
u=[ (a_1,e_1),(a_2,e_2),\ldots ]:=
a_1+\cfrac{e_1}{a_2 +\cfrac{e_2}{a_3+\cdots}} \geq 1,
\end{equation}
where $a_i\in 2\N$ and $e_i\in \{ \pm 1\}$.
The corresponding ECF Gauss shift $\Te$ acts on $[1,\infty)\setminus \Q$ by $\Te ([(a_1,e_1),(a_2,e_2),\ldots ]) =[(a_2,e_2),(a_3,e_3), \ldots ]$. In
different notation we have
\begin{equation*}
\Te (u)= \left( \begin{matrix} 0 & e_1 \\ 1 & -a_1 \end{matrix} \right)u =
\frac{e_1}{u-a_1} \, ,
\end{equation*}
where $a_1=a_1(u) =2\big\lfloor \frac{u+1}{2}\big\rfloor \in 2\N$ and $e_1=e_1(u) =\operatorname{sgn} ( u-a_1(u))\in \{ \pm 1\}$.
The infinite measure $\mu_{E} =\big( \int_{-1}^1 (u+v)^{-2} dv\big) du=\frac{2du}{(u-1)(u+1)}$ is $\Te$-invariant.
Conjugating by $J(x):=\frac{1}{x}$, one gets the customary ECF Gauss map
$\oTe:=J^{-1} \Te J$ (\cite{KL,S1,S2}), which acts on $[0,1]$ as
\begin{equation}\label{eq1.7}
\oTe (x)=\bigg| \frac{1}{x} -2\bigg[ \frac{x+1}{2x} \bigg]\bigg| \  \  \mbox{\rm if $x\neq 0$},\qquad \oTe (0):=0,
\end{equation}
with invariant measure $\nu_{E} =J_\ast \mu_{E} = \frac{2dx}{(1-x)(1+x)}$.
Equivalently, $\overline{T}_E$ acts as a shift on ECF-expansions
\begin{equation*}
\oTe ([ (a_1,e_1),(a_2,e_2),(a_3,e_3),\ldots ]) =[(a_2,e_2),(a_3,e_3),\ldots ] .
\end{equation*}

The periodic points of $T_E$ are exactly the irrationals with periodic
ECF-expansion $\omega =[\, \overline{(a_1,e_1),\ldots,(a_n,e_n)}\,]$.
In Section \ref{sectECF} we will show that
these are also exactly the elements of the set $\RRR_E$ of QIs
$\omega >1$ with $\omega^* \in [-1,1]$, which we call \emph{$E$-reduced} QIs. 

To define the length of
$\omega=[\, \overline{(a_1,e_1),\ldots,(a_n,e_n)}\,] \in \RRR_E$ with $n=\operatorname{per}(\omega)$, we introduce the matrices
\begin{equation}\label{eq1.8}
\Omega_E(\omega):= \left( \begin{matrix} a_1 & e_1 \\ 1 & 0 \end{matrix}\right) \cdots
\left( \begin{matrix} a_n & e_n \\ 1 & 0 \end{matrix}\right),\quad
\widetilde{\Omega}_E(\omega):= \begin{cases}
\Omega_E(\omega) & \mbox{\rm if $(-e_1)\cdots (-e_n)=+1$} \\
\Omega_E(\omega)^2 & \mbox{\rm if $(-e_1)\cdots (-e_n)=-1$,} \end{cases}
\end{equation}
then set
\begin{equation*}
\varrho_E(\omega):=2\log {\mathfrak r} ( \widetilde{\Omega}_E(\omega)),\qquad \omega\in\RRR_E.
\end{equation*}

Denote
\begin{equation*}
I_2 :=\left( \begin{matrix} 1 & 0 \\ 0 & 1 \end{matrix}\right) ,\qquad
J_2 :=\left( \begin{matrix} 0 & 1 \\ 1 & 0 \end{matrix}\right) .
\end{equation*}
Consider the Theta groups $\widetilde{\Theta}:=\{ \sigma \in \operatorname{GL}(2,\Z) \mid \sigma \equiv I_2 \ \mbox{\rm or}\
J_2 \pmod{2} \}$ and $\Theta:= \widetilde{\Theta} \cap \operatorname{SL}(2,\Z)$.
In the RCF case, the stabilizer $\{ \sigma\in \operatorname{GL}(2,\Z) \mid \sigma \omega=\omega\}$ of a reduced quadratic irrational
$\omega$ is used to produce solutions of the Pell equations $u^2-\Delta v^2=\pm 4$ (see. e.g., \cite{Ha,On}). The connection obtained
by replacing reduced QIs by $E$-reduced QIs and the group $\operatorname{GL}(2,\Z)$ by
its subgroup $\widetilde{\Theta}$ will be discussed in Section \ref{Pell}.

The closed primitive geodesics on the modular surface $\Theta \backslash \H$ correspond exactly (cf. \cite{BM}) to
those closed geodesics that lift to a geodesic on $\H$ with endpoints
\begin{equation*}
\gamma_{+\infty} =e [ \,\overline{(a_1,e_1),\ldots ,(a_n,e_n)}\,]=e\omega \quad \mbox{\rm and}
\quad \gamma_{-\infty}  = \gamma_{+\infty}^\ast,
\end{equation*}
 for some $e \in \{ \pm 1\}$, and also with
\begin{equation*}
(-e_1) \ldots (-e_n)=1 .
\end{equation*}
This shows that the $E$-reduced QIs $\omega$ are naturally ordered by $\varrho_E (\omega)$,
which represents the length of such a geodesic.

The difference between $E$-reduced QIs and ordinary reduced QIs will be illustrated in the Appendix,
where we consider the families of $E$-reduced QIs of the form $[\,\overline{(a,-1)}\,]$,
$[\,\overline{(a_1,1),(a_2,-1)}\,]$,
$[\,\overline{(a_1,-1),(a_2,1)}\,]$, and
$[\,\overline{(a_1,-1),(a_2,-1)}\,]$. We compute their discriminants and lengths $\varrho_E(\omega)$,
and show that only the third family contains regular reduced QIs.

For every $\alpha,\beta_1,\beta_2 \geq 1$, $N\in \N$, set
\begin{equation}\label{eq1.9}
r_{\e} (\alpha,\beta_1,\beta_2;R) :=\sum\limits_{\substack{\omega \in \RRR_{\e}, \, \varrho_E (\omega) \leq R \\
\omega \geq \alpha,\, -\frac{1}{\beta_2} \leq \omega^* \leq \frac{1}{\beta_1} }} 1.
\end{equation}
We will prove the following results concerning the distribution of periodic points of $T_E$:

\begin{theorem}\label{thm1}
For every $\alpha,\beta_1,\beta_2 \geq 1$ with $(\alpha, \beta_1) \neq (1,1)$,
\begin{equation*}
r_E (\alpha,\beta_1,\beta_2;R) \
= \  C(\alpha,\beta_1,\beta_2) e^R +O_{\alpha,\beta_1,\varepsilon} (e^{(3/4+\varepsilon)R}),\qquad \forall \varepsilon >0,
\end{equation*}
where
\begin{equation*}
C(\alpha,\beta_1,\beta_2) = \frac{1}{\pi^2}
\log \bigg( \frac{\alpha\beta_2+1}{\alpha\beta_2}\cdot \frac{\alpha\beta_1}{\alpha\beta_1-1}\bigg)
=\frac{1}{\pi^2} \iint_{[\alpha,\infty) \times [-\frac{1}{\beta_1},\frac{1}{\beta_2}]} \frac{du\, dv}{(u+v)^2} .
\end{equation*}
\end{theorem}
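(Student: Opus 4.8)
The plan is to recast $r_E(\alpha,\beta_1,\beta_2;R)$ as a lattice-point count over the Theta group and then apply the Weil bound for Kloosterman sums, following the number-theoretic template of \cite{Bo,Us}. First I would pin down the correspondence $\omega\mapsto\Omega_E(\omega)$. For $\omega=[\,\overline{(a_1,e_1),\ldots,(a_n,e_n)}\,]\in\RRR_E$ of minimal period $n=\operatorname{per}(\omega)$, the relation $\Te^n(\omega)=\omega$ says exactly that $\Omega_E(\omega)=\left(\begin{smallmatrix}P&P'\\ Q&Q'\end{smallmatrix}\right)$ fixes $\omega$ as a M\"obius transformation, so $\omega,\omega^*$ are the two roots of $Qt^2+(Q'-P)t-P'=0$ and
\begin{equation*}
\omega+\omega^*=\frac{P-Q'}{Q},\qquad \omega\,\omega^*=-\frac{P'}{Q}.
\end{equation*}
Each factor of $\Omega_E(\omega)$ is $\equiv J_2\pmod 2$, hence $\Omega_E(\omega)\in\wTheta$, and minimality of $n$ makes it a primitive (non-power) element, so $\omega\mapsto\Omega_E(\omega)$ is injective (the attracting fixed point recovers $\omega$). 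Writing $\delta:=(-e_1)\cdots(-e_n)=\det\Omega_E(\omega)$, I would split according to $\delta=\pm1$. When $\delta=-1$ we have $\widetilde{\Omega}_E(\omega)=\Omega_E(\omega)^2$, so $\varrho_E(\omega)=4\log{\mathfrak r}(\Omega_E(\omega))$ and the cutoff $\varrho_E(\omega)\le R$ forces $|P+Q'|=O(e^{R/4})$; the number of such $\omega$ is then $O(e^{R/2})$, which is absorbed into the error term. The main term thus comes entirely from $\delta=+1$, i.e.\ from $\Omega_E(\omega)\in\wTheta\cap\operatorname{SL}(2,\Z)=\Theta$, where $\widetilde{\Omega}_E(\omega)=\Omega_E(\omega)$ has spectral radius $e^{\varrho_E(\omega)/2}$; hence, with $N:=e^{R/2}$, the condition $\varrho_E(\omega)\le R$ is equivalent to $P+Q'\le N+N^{-1}$.

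Next I would convert the remaining constraints into inequalities on $(P,P',Q,Q')$. Via the fixed-point formulas, $\omega\ge\alpha$ and $-\tfrac1{\beta_2}\le\omega^*\le\tfrac1{\beta_1}$ become explicit linear and bilinear inequalities in the entries, which together with the reducedness conditions (positivity and size of the entries forced by $a_i\in 2\N$) cut out a region; this is precisely the $\Theta$-analogue of the domain defining $S(\alpha,\beta;N)$ in \cite{Us}. Up to $O(N)$ boundary corrections, $r_E(\alpha,\beta_1,\beta_2;R)$ then equals
\begin{equation*}
\#\Big\{\left(\begin{smallmatrix}P&P'\\ Q&Q'\end{smallmatrix}\right)\in\Theta \ :\ P+Q'\le N,\ \text{range and reducedness inequalities hold}\Big\},
\end{equation*}
a count of $\operatorname{SL}(2,\Z)$-matrices subject to congruences $\pmod 2$.

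Finally I would evaluate this count. Fixing $Q$ and using $PQ'-P'Q=1$, the inner sum runs over $P\pmod Q$ in prescribed subintervals with $Q'$ determined by $PQ'\equiv 1\pmod Q$ (and $P'$ then fixed) lying in a further interval; summing over $Q$ produces incomplete Kloosterman sums, which the Weil bound controls after completion, yielding the main term with power-saving error $O_\varepsilon(N^{3/2+\varepsilon})=O_\varepsilon(e^{(3/4+\varepsilon)R})$. The mod-$2$ conditions defining $\Theta$ (index $[\operatorname{SL}(2,\Z):\Theta]=3$) produce the density $\tfrac1{\pi^2}=\tfrac13\cdot\tfrac1{2\zeta(2)}$, and the main term is $e^R$ times the integral of the invariant measure $\frac{d\omega\,d\omega^*}{(\omega-\omega^*)^2}$ on fixed-point pairs over the admissible region; the substitution $v=-\omega^*$ turns this into
\begin{equation*}
\frac1{\pi^2}\int_{-1/\beta_1}^{1/\beta_2}\!\!\left(\int_\alpha^\infty\frac{du}{(u+v)^2}\right)dv
=\frac1{\pi^2}\int_{-1/\beta_1}^{1/\beta_2}\frac{dv}{\alpha+v}
=\frac1{\pi^2}\log\frac{\alpha+1/\beta_2}{\alpha-1/\beta_1}=C(\alpha,\beta_1,\beta_2),
\end{equation*}
finite precisely when $\alpha-1/\beta_1>0$, i.e.\ $(\alpha,\beta_1)\ne(1,1)$. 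The main obstacle is the combinatorial bookkeeping of the first two steps: making the correspondence bijective and translating the dynamical conditions into the exact reducedness and mod-$2$ congruence inequalities, so that the resulting region carries the claimed measure and the $\delta=-1$ contribution genuinely drops into the error; the concluding Kloosterman estimate is technical but runs along the lines of \cite{Bo,Us}.
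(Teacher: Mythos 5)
Your proposal follows the same route as the paper: identify periodic points with $E$-reduced QIs, encode them as matrices congruent to $I_2$ or $J_2$ modulo $2$, count those matrices with trace at most $N=e^{R/2}$ in the region cut out by $\omega\geq\alpha$, $\omega^*\in[-\frac{1}{\beta_2},\frac{1}{\beta_1}]$ via Weil's bound for Kloosterman sums, and read off the main term as $N^2$ times the invariant-measure integral; your evaluation of the constant $C(\alpha,\beta_1,\beta_2)$ is correct. The organizational differences are harmless: you split by $\det\Omega_E(\omega)=\pm1$ and discard $\delta=-1$ into the error, whereas the paper keeps those points via $\widetilde{\Omega}_E(\omega)=\Omega_E(\omega)^2$ and the word--matrix bijection (Proposition \ref{prop19}, Corollary \ref{cor20}), splitting instead by the sign $e_n$ (equivalently, of $\omega^*$). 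Two caveats on your variant: for $e_n=-1$ a bounded trace $p'-q$ does \emph{not} by itself bound the entries, so counting the $\delta=-1$ family requires a divisor argument (the bound $O_\varepsilon(N^{1+\varepsilon})$ is true but not immediate); and matrices that are proper powers $\sigma^k$, $k\geq 2$, do not correspond to additional periodic points, so their contribution to the matrix count must be bounded separately (this is Corollary \ref{cor32} and \eqref{eq8.3}, giving $O(N\log N)$). Both are fixable with tools you already invoke.

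There are, however, two places where your sketch asserts what is in fact the paper's main technical work. First, the claim that the conditions on $(\omega,\omega^*)$ translate into entry inequalities ``up to $O(N)$ boundary corrections'' is unsupported, and $O(N)$ is not what the method gives: $\omega$ and $-1/\omega^*$ are not ratios of entries, only approximated by them with errors $\frac{1}{q_{n-1}(q_n-q_{n-1})}$ and $\frac{1}{p_{n-1}(p_{n-1}-q_{n-1})}$ (formulas \eqref{eq8.4}--\eqref{eq8.5}), so one must bound the number of matrices in the region for which these denominators are $\leq N$. That is Lemma \ref{lem33}, a divisor-counting argument yielding $O_\varepsilon(N^{3/2+\varepsilon})$, followed by the two-sided sandwich \eqref{eq8.6}--\eqref{eq8.7} with parameters perturbed by $\frac{1}{N}$. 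Your conclusion survives only because $N^{3/2+\varepsilon}=e^{(3/4+\varepsilon)R}$ exactly matches the claimed error term; the proposal contains no mechanism for this step.

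Second, ``$[\operatorname{SL}(2,\Z):\Theta]=3$ produces the density $\frac13$'' is a heuristic, not an argument: the counting region is not $\Theta$-invariant, so one must prove that the $\operatorname{SL}(2,\Z)$-count equidistributes among residue classes mod $2$ with a power-saving error. The paper does this through the bijection of Lemma \ref{lem30}, which converts the two admissible classes into counts of solutions of $uv\equiv\pm 1\pmod{p}$ (with $u,v$ even, $p$ odd) and of $uv\equiv\pm 1\pmod{2p}$ (with $p$ even), and then through the parity-restricted totient sums $S_i^O$, $S_i^E$ of Section 5, obtained from the Dirichlet series $\zeta_O$, $\zeta_E$ and Perron's formula. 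None of this is in \cite{Bo} or \cite{Us}, which carry no congruence restrictions; indeed, Remark \ref{rem6} records that dispensing with the refined estimates \eqref{eq5.7}--\eqref{eq5.8} degrades the error to $O_\varepsilon(e^{(7/8+\varepsilon)R})$. So the exponent $3/4$ you claim depends precisely on the analysis your sketch defers to the references.
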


Taking $\beta_1=\beta_2=1$, and $\alpha=\beta_2=1$, $\beta_1=\infty$ respectively, we find

\begin{cor}\label{cor2}
For every $\alpha >1$,
\begin{equation*}
\sum\limits_{\substack{\omega\in \RRR_{\e},\, \omega \geq \alpha \\ \varrho_E (\omega)\leq R}} 1
=\frac{e^R}{\pi^2} \int_\alpha^\infty d\mu_E +O_{\alpha,\varepsilon} (e^{(3/4+\varepsilon)R}) .
\end{equation*}
\end{cor}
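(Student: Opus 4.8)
The plan is to obtain this as a direct specialization of Theorem~\ref{thm1} with $\beta_1=\beta_2=1$. The key preliminary observation is that, by the characterization of $E$-reduced QIs established in Section~\ref{sectECF}, every $\omega\in\RRR_E$ automatically satisfies $\omega^*\in[-1,1]$. Hence, in the definition \eqref{eq1.9} of $r_E(\alpha,\beta_1,\beta_2;R)$, the choice $\beta_1=\beta_2=1$ turns the side condition $-\tfrac{1}{\beta_2}\le\omega^*\le\tfrac{1}{\beta_1}$ into $-1\le\omega^*\le1$, which holds for every element of $\RRR_E$ and is therefore vacuous. Consequently
\begin{equation*}
r_E(\alpha,1,1;R)=\sum_{\substack{\omega\in\RRR_E,\ \omega\ge\alpha\\ \varrho_E(\omega)\le R}}1 ,
\end{equation*}
so the left-hand side of the corollary is exactly $r_E(\alpha,1,1;R)$.

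Next I would invoke Theorem~\ref{thm1}. Since $\alpha>1$, the excluded pair $(\alpha,\beta_1)=(1,1)$ does not arise, so the theorem applies with $\beta_1=\beta_2=1$ and gives
\begin{equation*}
r_E(\alpha,1,1;R)=C(\alpha,1,1)\,e^R+O_{\alpha,\varepsilon}\big(e^{(3/4+\varepsilon)R}\big) .
\end{equation*}
Substituting $\beta_1=\beta_2=1$ into the closed form for the constant and using the telescoping $\frac{\alpha+1}{\alpha}\cdot\frac{\alpha}{\alpha-1}=\frac{\alpha+1}{\alpha-1}$ yields $C(\alpha,1,1)=\frac{1}{\pi^2}\log\frac{\alpha+1}{\alpha-1}$.

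Finally I would match this with the measure-theoretic normalization in the statement. Using $\mu_E=\frac{2\,du}{(u-1)(u+1)}$ together with the partial fraction decomposition $\frac{2}{(u-1)(u+1)}=\frac{1}{u-1}-\frac{1}{u+1}$, a direct integration gives $\int_\alpha^\infty d\mu_E=\big[\log\frac{u-1}{u+1}\big]_\alpha^\infty=\log\frac{\alpha+1}{\alpha-1}$, so that $\frac{1}{\pi^2}\int_\alpha^\infty d\mu_E=C(\alpha,1,1)$ and the two expressions for the main term coincide. Since the whole argument is a specialization of Theorem~\ref{thm1} followed by an elementary integral, there is no genuine obstacle here; the only point requiring care is the one flagged above, namely that the constraint $\omega^*\in[-1,1]$ defining $\RRR_E$ is precisely what renders the $\beta_1=\beta_2=1$ side condition redundant, guaranteeing that no periodic points are lost in passing from $r_E(\alpha,1,1;R)$ to the unrestricted count on the left-hand side.
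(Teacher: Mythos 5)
Your proposal is correct and is exactly the paper's argument: the paper obtains Corollary~\ref{cor2} by taking $\beta_1=\beta_2=1$ in Theorem~\ref{thm1}, and your verification that the constraint $\omega^*\in[-1,1]$ is vacuous on $\RRR_E$, together with the computation $C(\alpha,1,1)=\frac{1}{\pi^2}\log\frac{\alpha+1}{\alpha-1}=\frac{1}{\pi^2}\int_\alpha^\infty d\mu_E$, just makes explicit the details the paper leaves to the reader.
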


\begin{cor}\label{cor3}
$\displaystyle \quad
\sum\limits_{\substack{\omega\in \RRR_{\e},\, \omega^* <0 \\ \varrho_E(\omega)\leq R}} 1
=\frac{e^R \log 2}{\pi^2} +O_\varepsilon (e^{(3/4+\varepsilon)R}) .$
\end{cor}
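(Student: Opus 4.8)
The plan is to obtain Corollary~\ref{cor3} from Theorem~\ref{thm1} by a subtraction-and-limit argument in the parameter $\beta_2$, crucially exploiting that the implied constant in the error term of Theorem~\ref{thm1} depends only on $\alpha,\beta_1,\varepsilon$ and is therefore uniform in $\beta_2$. Since every $\omega\in\RRR_E$ has $\omega^*\in[-1,1]$, the condition $\omega^*<0$ is the same as $\omega^*\in[-1,0)$, so the sum in question counts exactly the $E$-reduced QIs with $\varrho_E(\omega)\le R$ whose conjugate lies in this interval. First I would fix once and for all a value $\beta_1>1$, say $\beta_1=2$, so that $(\alpha,\beta_1)=(1,\beta_1)\neq(1,1)$ and Theorem~\ref{thm1} applies to every count below.

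Next I would compare two instances of Theorem~\ref{thm1} with the same $\alpha=1$ and $\beta_1$ but different $\beta_2$. For every $\beta_2\ge1$,
\begin{equation*}
r_E(1,\beta_1,1;R)-r_E(1,\beta_1,\beta_2;R)=\sum_{\substack{\omega\in\RRR_E,\ \varrho_E(\omega)\le R\\ -1\le\omega^*<-1/\beta_2}}1,
\end{equation*}
since the two counting functions differ precisely by the $\omega$ whose conjugate lies in $[-1,-1/\beta_2)$. Inserting the asymptotics of Theorem~\ref{thm1}, the two error contributions are both $O_{\beta_1,\varepsilon}(e^{(3/4+\varepsilon)R})$ with one and the same implied constant (independent of $\beta_2$), while the main terms combine to
\begin{equation*}
\big[C(1,\beta_1,1)-C(1,\beta_1,\beta_2)\big]e^R=\frac{1}{\pi^2}\log\Big(\frac{2\beta_2}{\beta_2+1}\Big)e^R,
\end{equation*}
the $\tfrac{\beta_1}{\beta_1-1}$ factors in the two constants cancelling exactly.

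Finally I would let $\beta_2\to\infty$ with $R$ held fixed. For each fixed $R$ only finitely many $\omega\in\RRR_E$ satisfy $\varrho_E(\omega)\le R$, and since none of them has $\omega^*=0$, the left-hand count stabilizes to $\#\{\omega^*\in[-1,0)\}=\#\{\omega^*<0\}$ once $\beta_2$ exceeds the reciprocal of the smallest $|\omega^*|$ occurring among the relevant $\omega$ with $\omega^*<0$. As $\tfrac{2\beta_2}{\beta_2+1}\to2$ and the error bound is uniform throughout this limit, passing to the limit yields
\begin{equation*}
\sum_{\substack{\omega\in\RRR_E,\ \omega^*<0\\ \varrho_E(\omega)\le R}}1=\frac{\log2}{\pi^2}e^R+O_{\beta_1,\varepsilon}(e^{(3/4+\varepsilon)R}),
\end{equation*}
and with $\beta_1$ now a fixed numerical constant the implied constant depends only on $\varepsilon$, which is the stated formula. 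The one delicate point is the interchange of the limit $\beta_2\to\infty$ with the asymptotics in $R$, and this is exactly where the $\beta_2$-uniformity of the error term in Theorem~\ref{thm1} is indispensable: a naive direct substitution $\beta_1=\infty$ would instead let the $\beta_1$-dependent error constant diverge, so routing the limit through $\beta_2$ while keeping $\beta_1$ fixed is the essential maneuver.
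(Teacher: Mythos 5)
Your argument is correct, but it follows a genuinely different route from the paper's. The paper proves Corollary \ref{cor3} by directly substituting $\alpha=\beta_2=1$, $\beta_1=\infty$ into Theorem \ref{thm1}: in the proof of that theorem the relevant count splits as $\lvert\SSS_-(\alpha,\beta_1;N)\rvert+\lvert\SSS_+(\alpha,\beta_2;N)\rvert$ (see \eqref{eq7.7} and \eqref{eq8.2}), where the $\SSS_-$ part carries exactly the $\omega$ with $\omega^*\in(0,\tfrac{1}{\beta_1}]$; at $\beta_1=\infty$ this part is empty, so the count reduces to $\lvert\SSS_+(1,1;N)\rvert=\frac{N^2}{6\zeta(2)}\log 2+O_\varepsilon(N^{3/2+\varepsilon})$ by \eqref{eq7.1}, which with $N=e^{R/2}$ and the approximation argument of Section \ref{approx} is precisely the corollary. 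That substitution is justified by inspecting the proof rather than the statement: the $\beta_1$-dependence of the error constant enters only through Lemma \ref{lem33} applied with $A=1+\frac{\alpha}{\alpha\beta_1-1}$, a quantity that stays bounded (indeed tends to $1$) as $\beta_1\to\infty$ and blows up only when $\alpha\beta_1\to 1$; so, contrary to your closing remark, direct substitution does not make the error constant diverge. What is true --- and what your proof correctly responds to --- is that this uniformity in $\beta_1$ cannot be read off from the statement of Theorem \ref{thm1}, which is silent on how the implied constant depends on $\beta_1$. Your subtraction-and-limit in $\beta_2$, with $\beta_1$ pinned at a fixed value greater than $1$, is exactly the right black-box workaround: it uses only the stated $\beta_2$-uniformity of the error, the exact cancellation of the $\frac{\beta_1}{\beta_1-1}$ factors in $C(1,\beta_1,1)-C(1,\beta_1,\beta_2)$, and the stabilization of the left-hand count for fixed $R$ (finitely many relevant $\omega$, none with $\omega^*=0$); each of these steps checks out. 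In short, your proof buys independence from the internals of the proof of Theorem \ref{thm1} --- it would survive even if the error constant there did degrade as $\beta_1\to\infty$ --- at the cost of a longer argument than the paper's one-line substitution.
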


Every number $u\in [1,\infty)\setminus \Q$ has a unique BCF-expansion
\begin{equation}\label{eq1.10}
u = \llb a_1,a_2,a_3,\ldots \rrb:=  a_1-\cfrac{1}{a_2 -\cfrac{1}{a_3-\cfrac{1}{\ddots}}} \geq 1 ,\qquad a_i \in \N,\ a_i \geq 2.
\end{equation}
The BCF Gauss shift $\Tb$ acts on $[1,\infty)\setminus \Q$ as $\Tb (\llb a_1,a_2,\ldots\rrb) =\llb a_2,a_3,\ldots\rrb$, or,
in different notation,
\begin{equation*}
\Tb (u)= M(a_1,-1)^{-1} u =\left( \begin{matrix} 0 & -1 \\ 1 & -a_1 \end{matrix} \right)u =
\frac{1}{a_1-u}  =\frac{1}{1-\{ u\}},
\end{equation*}
where $a_1=a_1(u) =1+\lfloor u\rfloor\geq 2$.
The infinite measure $\mu_{B} =\big( \int_0^1 (u-v)^{-2} dv\big) du =\frac{du}{u(u-1)}$ is $\Tb$-invariant.
Conjugating by $J_{B} (x):=\frac{1}{1-x}$, one gets the R\' enyi-Gauss map
$\oTb:=J_{B}^{-1} \Tb J_{\b}$, which acts on $[0,1]$ as
\begin{equation}\label{eq1.11}
\oTb (x)=\bigg\{ \frac{1}{1-x}\bigg\} ,\qquad x\neq 1,
\end{equation}
with invariant measure $\nu_{\b} =J_{\b\ast} \mu_{B} = \frac{dx}{x}$ (\cite{Re,AF}).
BCF-expansions of rational numbers arise in the study of singularities of complex manifolds (\cite{Hi})
and in formulae for class numbers (\cite{Za}).

When performing elementary computations with backward continued fractions, one can simply take
$e_i=-1$, $\forall i\geq 1$ in the ECF-expansions and assume that $a_i \geq 2$ are (not necessarily even) integers.
In particular, this shows that the sequence $(\frac{p_k}{q_k})$ of convergents of a given number is decreasing.
The periodic points of $T_B$ are exactly the irrationals with periodic
BCF-expansion $\omega =\llb\, \overline{a_1,\ldots,a_n}\,\rrb$. These are shown to also coincide with the elements of the set $\RRR_B$ of QIs
$\omega >1$ with $\omega^* \in [0,1]$, which we call \emph{$B$-reduced} QIs. Consider the matrix
\begin{equation*}
\Omega_B (\omega):= \left( \begin{matrix} a_1 & -1 \\ 1 & 0 \end{matrix}\right) \cdots
\left( \begin{matrix} a_n & -1 \\ 1 & 0 \end{matrix}\right),\quad \mbox{\rm where
$n=\operatorname{per}(\omega)$.}
\end{equation*}
Notice that $\det (\Omega_B(\omega))=+1$, so $\widetilde{\Omega}_B (\omega)=\Omega_B (\omega)$ for all $\omega\in\RRR_B$. Define
\begin{equation*}
\varrho_B(\omega):=2\log {\mathfrak r} ( \Omega_B (\omega)),\qquad \omega\in\RRR_B.
\end{equation*}
Define also
\begin{equation*}
r_B(\alpha,\beta;R):= \sum\limits_{\substack{\omega\in \RRR_{\b},\,  \varrho_B (\omega) \leq R \\
\omega \geq \alpha,\, 0< \omega^* \leq \frac{1}{\beta}}} 1.
\end{equation*}
We will prove the following results concerning the distribution of periodic points of $T_B$:

\begin{theorem}\label{thm4}
For every $\alpha,\beta \geq 1$ with $(\alpha,\beta)\neq (1,1)$,
\begin{equation*}
r_B(\alpha,\beta;R) =
\frac{e^R}{2\zeta(2)} \iint_{[\alpha,\infty) \times [0,\frac{1}{\beta}]} \frac{du\, dv}{(u-v)^2}
+O_{\alpha,\beta_1,\varepsilon} (e^{(3/4+\varepsilon)R}),\qquad \forall \varepsilon >0.
\end{equation*}
\end{theorem}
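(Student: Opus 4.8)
The plan is to convert the dynamical count $r_B(\alpha,\beta;R)$ into a lattice-point count over a distinguished family of matrices in $\operatorname{SL}(2,\Z)$, and then to evaluate that count by the Weil/Kloosterman method already used for Theorem~\ref{thm1} and in \cite{Us}. The key simplification relative to the ECF case is that, as noted above, the BCF expansion is the ECF expansion with all signs $e_i=-1$ (and arbitrary integer digits $a_i\ge 2$), so $\det\Omega_B(\omega)=+1$ and $\widetilde\Omega_B(\omega)=\Omega_B(\omega)$; everything therefore lives inside $\operatorname{SL}(2,\Z)$, no passage to $\Omega^2$ is needed, and the arithmetic prefactor will come out to be $\tfrac{1}{2\zeta(2)}$ exactly as in the classical RCF count.

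First I would make the correspondence $\omega\mapsto\Omega_B(\omega)$ explicit. Writing $\Omega_B(\omega)=\left(\begin{smallmatrix} p_n & -p_{n-1}\\ q_n & -q_{n-1}\end{smallmatrix}\right)$ in terms of the BCF convergents $p_k/q_k$, the recurrences $p_k=a_kp_{k-1}-p_{k-2}$ and $q_k=a_kq_{k-1}-q_{k-2}$ with $a_k\ge 2$ force $0\le q_{n-1}<q_n<p_n$, $0\le q_{n-1}<p_{n-1}<p_n$ and $p_{n-1}q_n-p_nq_{n-1}=1$; conversely each matrix of $\operatorname{SL}(2,\Z)$ satisfying these constraints is $\Omega_B$ of a unique $B$-reduced QI, recovered as its attracting fixed point. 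The fixed-point relation $q_n\omega^2-(p_n+q_{n-1})\omega+p_{n-1}=0$ gives $\omega+\omega^*=(p_n+q_{n-1})/q_n$ and $\omega\omega^*=p_{n-1}/q_n$, while $\operatorname{tr}\Omega_B(\omega)=p_n-q_{n-1}$ and a short computation shows $(p_n+q_{n-1})^2-4p_{n-1}q_n=(p_n-q_{n-1})^2-4$, so the discriminant of the fixed-point quadratic equals $\operatorname{tr}^2-4$.

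With this dictionary the three defining constraints become explicit conditions on the entries. Since the spectral radius satisfies $\mathfrak r(\Omega_B)+\mathfrak r(\Omega_B)^{-1}=\operatorname{tr}\Omega_B$, the length bound $\varrho_B(\omega)\le R$ is equivalent to $p_n-q_{n-1}\le e^{R/2}+e^{-R/2}$; writing $N:=e^{R/2}$ this is essentially $p_n-q_{n-1}\le N$. Because $1/\beta\le 1\le\alpha$ separate the two roots of the upward parabola $f(x)=q_nx^2-(p_n+q_{n-1})x+p_{n-1}$, the conditions $\omega\ge\alpha$ and $\omega^*\le 1/\beta$ are equivalent to $f(\alpha)\le 0$ and $f(1/\beta)\le 0$ (two inequalities that are \emph{linear} in the entries $p_n,p_{n-1},q_n,q_{n-1}$), and $\omega^*>0$ to $p_{n-1}>0$. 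I would package these as a counting function for matrices in $\operatorname{SL}(2,\Z)$ subject to the above positivity, to the linear constraints, and to the trace bound $p_n-q_{n-1}\le N$, the exact analogue of $S(\alpha,\beta;N)$ from the Introduction, with the sum-trace of the RCF case replaced by the difference $p_n-q_{n-1}$. The count is then carried out as in \cite{Us}: fix the entry $q_n=q$, use the determinant relation $p_{n-1}q_n-p_nq_{n-1}=1$, which then reads $p_nq_{n-1}\equiv -1\pmod q$ and determines $p_{n-1}$, to reduce to a sum over $p_n,q_{n-1}$ tied by a modular inverse; extract the leading term against the linear constraints, and bound the oscillation by Kloosterman sums to which the Weil bound applies. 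This yields a main term of size $N^2=e^R$ and an error $O_{\alpha,\beta,\varepsilon}(N^{3/2+\varepsilon})=O_{\alpha,\beta,\varepsilon}(e^{(3/4+\varepsilon)R})$. Evaluating the main term and changing variables to $u=\omega$, $v=\omega^*$, whose Jacobian is the density $(u-v)^{-2}$ of the natural-extension invariant measure, identifies the constant as $\tfrac{1}{2\zeta(2)}\iint_{[\alpha,\infty)\times[0,1/\beta]}(u-v)^{-2}\,du\,dv$. Finally, the passage from all positive-BCF-form matrices to the primitive ones that correspond to $\omega\in\RRR_B$ is handled by M\"obius inversion over proper powers $\Omega_B(\omega)^k$, whose contribution is of size $e^{R/2}$ and hence absorbed in the error.

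The hard part will be at the interface between the dynamics and the lattice count, in two places. First, one must check that the entrywise region cut out by $f(\alpha)\le 0$, $f(1/\beta)\le 0$ and $p_n-q_{n-1}\le N$ has $N^2$-normalized volume exactly equal to the stated integral, and that the hypothesis $(\alpha,\beta)\ne(1,1)$ is precisely what keeps this volume finite --- it is the counterpart of the divergence of $\mu_B=\tfrac{du}{u(u-1)}$ at $u=1$ --- so that the implied constant, while blowing up as $(\alpha,\beta)\to(1,1)$, stays finite and the error uniform on the stated range. Second, because here the size parameter is the \emph{difference} $p_n-q_{n-1}$ rather than the sum of two positive diagonal entries as in \cite{Us}, the shape of the summation region is genuinely different, and one must re-verify that after fixing a row or column and invoking the determinant congruence the remaining sums are still honest (incomplete) Kloosterman sums enjoying the $q^{1/2+\varepsilon}$ cancellation after summation over $q\le N$, and that the slow, infinite-measure nature of $T_B$ produces no boundary terms of main-term size. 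Establishing this last point is, I expect, the crux of the argument.
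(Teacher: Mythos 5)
Your proposal is correct, and its skeleton is the paper's: periodic points of $T_B$ are the $B$-reduced QIs (Proposition \ref{prop27}); these correspond to the matrices $\Omega_B(\omega)^k$, products of $M(a_i,-1)$; powers $k\ge 2$ and the trace/spectral-radius gap are absorbed in the error (Lemma \ref{lem31}, Corollary \ref{cor32}, \eqref{eq8.3}); and the matrix family is counted by fixing one entry as a modulus, reading the determinant equation as a congruence $uv\equiv -1\pmod q$, and applying \eqref{eq6.1}, Lemma \ref{lem29} and the totient sums of Section 5 --- exactly the paper's route to \eqref{eq6.5}. The genuine difference is at the dynamics--lattice interface. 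You impose the constraints $\omega\ge\alpha$, $\omega^*\le\frac{1}{\beta}$ \emph{exactly}, via the sign of $f(x)=q_nx^2-(p_n+q_{n-1})x+p_{n-1}$ at $x=\alpha,\frac{1}{\beta}$; eliminating $p_{n-1}$ by the determinant relation, $f(\alpha)\le 0$ factors as $(q_n\alpha-p_n)(q_n\alpha-q_{n-1})\le -1$, so up to thin strips your region for fixed $q=q_n$ is $\{u\ge\alpha q,\; 0\le v\le q/\beta,\; u-v\le N\}$, which is the paper's $\Omega^-_p$ with $\alpha$ and $\beta$ interchanged and is covered verbatim by Lemma \ref{lem29}. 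The paper instead uses the approximate conditions $p_{n-1}\ge\alpha q_{n-1}$, $p_n\ge\beta p_{n-1}$ (convergent-ratio surrogates for $\omega$ and $1/\omega^*$) and must control the mismatch by \eqref{eq8.4}--\eqref{eq8.5}, the divisor bounds of Lemma \ref{lem33}, and the perturbed-parameter sandwich \eqref{eq8.6}--\eqref{eq8.7}. Your encoding buys a shorter endgame; what it requires instead --- correctly flagged by you as the crux, though it is elementary --- is that the strips where $-1<(q\alpha-u)(q\alpha-v)<0$ contribute negligibly: with the congruence they give $O_{\alpha,\beta}(N\log N)$, since for $u\ge q\alpha+1$ there is $O(1)$ admissible $v$ near $q\alpha$ and then $O(1+N/q)$ admissible $u$ in its residue class, while for $u\in(q\alpha,q\alpha+1)$ there are $O(1)$ choices of $u$ and $O_\alpha(1)$ choices of $v\in[0,q\alpha)$ in its class. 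Two small repairs: primitivity needs no M\"obius inversion --- the matrices biject with pairs $(\omega,k)$ (Corollary \ref{cor20} with all $e_i=-1$) and $\sum_{2\le k\le \log_2 N}T_k\ll N\log N$ as in \eqref{eq8.3}; and the converse half of your dictionary (that every $\operatorname{SL}(2,\Z)$ matrix with your inequality pattern is such a product, with the periodic point as attracting fixed point) is not automatic, but is exactly the content of the induction in Lemma \ref{lem28}, equivalently Proposition \ref{prop17} with all $e_i=-1$.
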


Taking $\beta=1$ we find

\begin{cor}\label{cor5}
For every $\alpha >1$,
\begin{equation*}
\begin{split}
\sum\limits_{\substack{\omega\in \RRR_{\b},\, \omega \geq \alpha \\ \varrho_B (\omega) \leq R}} 1 & =
\frac{e^R}{2\zeta(2)} \log \bigg(\frac{\alpha}{\alpha-1}\bigg) +O_\varepsilon (e^{(3/4+\varepsilon)R}) \\  & =
\frac{e^R}{2\zeta(2)} \int_{[\alpha,\infty)} \frac{du}{u(u-1)} +O_{\alpha,\varepsilon} (e^{(3/4+\varepsilon)R}).
\end{split}
\end{equation*}
\end{cor}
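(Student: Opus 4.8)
The plan is to derive this directly from Theorem \ref{thm4} by specializing to $\beta = 1$, so the work reduces to checking that the hypotheses apply and to evaluating the resulting integral in closed form.

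First I would verify that Theorem \ref{thm4} is applicable with $\beta = 1$: since $\alpha > 1$, the excluded case $(\alpha,\beta) = (1,1)$ does not occur, so the asymptotic
\[
r_B(\alpha, 1; R) = \frac{e^R}{2\zeta(2)} \iint_{[\alpha,\infty)\times[0,1]} \frac{du\,dv}{(u-v)^2} + O_{\alpha,\varepsilon}(e^{(3/4+\varepsilon)R})
\]
holds. Next I would simplify the counting function $r_B(\alpha, 1; R)$. By definition it sums $1$ over $\omega \in \RRR_B$ with $\varrho_B(\omega) \le R$, $\omega \ge \alpha$, and $0 < \omega^* \le 1$. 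Since every $\omega \in \RRR_B$ already satisfies $\omega^* \in [0,1]$, and since a quadratic irrational cannot have vanishing conjugate (otherwise its minimal polynomial would have zero constant term, forcing $\omega$ to be rational), the constraint $0 < \omega^* \le 1$ is automatic for $\omega \in \RRR_B$. Hence $r_B(\alpha, 1; R)$ equals the left-hand side of the corollary.

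It then remains to compute the integral. Integrating first in $v$ over $[0,1]$ gives $\int_0^1 (u-v)^{-2}\,dv = \frac{1}{u-1} - \frac{1}{u} = \frac{1}{u(u-1)}$, and then integrating in $u$ over $[\alpha,\infty)$ by partial fractions yields
\[
\int_\alpha^\infty \frac{du}{u(u-1)} = \left[\log\frac{u-1}{u}\right]_\alpha^\infty = \log\frac{\alpha}{\alpha-1}.
\]
This simultaneously establishes both displayed forms of the main term. Since the only non-routine ingredient, Theorem \ref{thm4}, is assumed, there is no genuine obstacle here; the single point that warrants care is the observation that the upper conjugate bound becomes vacuous at $\beta = 1$, so that the restricted count $r_B(\alpha, 1; R)$ really does capture the full sum over $\omega \in \RRR_B$ with $\omega \ge \alpha$.
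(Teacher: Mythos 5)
Your proposal is correct and follows exactly the paper's route: the paper proves Corollary \ref{cor5} simply by setting $\beta=1$ in Theorem \ref{thm4} (and implicitly evaluating $\iint_{[\alpha,\infty)\times[0,1]}(u-v)^{-2}\,du\,dv=\log\frac{\alpha}{\alpha-1}$, which you carry out explicitly). Your additional observation that the constraint $0<\omega^*\leq 1$ is vacuous for $\omega\in\RRR_{\b}$ (since a quadratic irrational has irrational, hence nonzero, conjugate) is a correct and worthwhile detail that the paper leaves unstated.
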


\begin{remark}\label{rem6}
A shorter proof can be achieved without the analytic number theoretical estimates \eqref{eq5.7} and \eqref{eq5.8}, but
with the price of getting an error term $O_\varepsilon (e^{(7/8+\varepsilon)R})$ instead of
$O_\varepsilon (e^{(3/4+\varepsilon)R})$ in Theorems \ref{thm1} and \ref{thm4}.
\end{remark}

We expect our approach to also work in the remaining situations of (good) MSB Euclidean algorithms of type (K) and (O).
We are planning to study this in further work.

\section{Even Continued Fractions}\label{sectECF}
In this section we investigate various algebraic properties of ECF expansions. In Subsection 2.1 we revisit the main features of
the ECF Gauss shift, and in Subsection 2.2 we focus on QIs. Among other things, we give a short algebraic proof of the ECF analogue of the Galois formula \eqref{eq1.2}
(previously proved in different ways in \cite{KL} and \cite{BM}), then prove
that ECF-periodic QIs coincide exactly with QIs $\omega >1$ having $\omega^* \in (-1,1)$
(previously proved geometrically in \cite{BM}). Finally, inspired by the approach pursued in the RCF situation in \cite{KO,Bo,Us}, we describe
some bijections between subsets of ${\mathscr R}_E$ and subsets of $\operatorname{SL}(2,\Z)$ that will play an essential role in the proof of Theorem \ref{thm1}.

\subsection{ECF expansions and the associated Gauss shift}
The ECF-convergents $\frac{p_k}{q_k}$ of an irrational number $u$ as in \eqref{eq1.6} are given by
\begin{equation*}
\begin{split}
& p_0=p_0(u):=1,\quad p_1=p_1(u):=a_1 ,\quad p_k=p_k(u):=a_k p_{k-1}+e_{k-1}p_{k-2} ,\\
& q_0=q_0(u):=0,\quad q_1=q_1(u):=1 ,\quad q_k=q_k(u):=a_k q_{k-1}+e_{k-1}q_{k-2} .
\end{split}
\end{equation*}
These relations show
\begin{equation*}
q_k \ \mbox{\rm even}\ \Longleftrightarrow \ k \ \mbox{\rm even} \ \Longleftrightarrow \ p_{k-1}\ \mbox{\rm even} .
\end{equation*}
The matrix $M(a,e):=\Big(\begin{smallmatrix} a & e \\ 1 & 0 \end{smallmatrix}\Big)$ acts on $u$ by
\begin{equation*}
M(a,e) u=a+\frac{e}{u}=[(a,e),u] .
\end{equation*}

The Gauss map $\Te$ acts on $u=[ (a_1,e_1),(a_2,e_2),\ldots ]$ by
\begin{equation*}
\Te (u)= M(a_1,e_1)^{-1} u =\left( \begin{matrix} 0 & e_1 \\ 1 & -a_1 \end{matrix} \right)u =
\frac{e_1}{u-a_1}  .
\end{equation*}
The natural extension of the endomorphism $\Te$ is the automorphism of $[1,\infty)\times [-1,1]$ given by
\begin{equation*}
\widetilde{T}_{E} (u,v)=\bigg( \Te (u) ,\frac{e_1}{v+a_1}\bigg) =\bigg( \frac{e_1}{u-a_1},\frac{e_1}{v+a_1}\bigg) .
\end{equation*}
The map $\widetilde{T}_{E}$ acts on the digits in the following way:
\begin{equation*}
\widetilde{T}_{E} ([w_1,w_2,\ldots],\llangle w_0,w_{-1},\ldots \rrangle) =
([w_2,w_3,\ldots ],\llangle w_1,w_0,\ldots \rrangle) ,\qquad w_i=(a_i,e_i) \in 2\N \times \{ \pm 1\} ,
\end{equation*}
where
\begin{equation*}
\llangle (b_1,f_1),(b_2,f_2),\ldots \rrangle := \cfrac{f_1}{b_1+\cfrac{f_2}{b_2+\ddots}},\qquad
b_j\in 2\N,\    f_j \in \{ \pm 1\}
\end{equation*}
denotes the dual ECF-expansion of irrationals in $[-1,1]$.

\begin{center}
\begin{figure}
\includegraphics[scale=0.45,bb = 10 0 470 350]{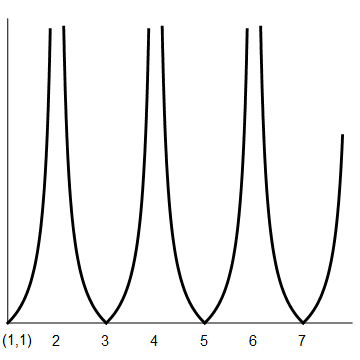}
\includegraphics[scale=0.5,bb = 0 20 280 350]{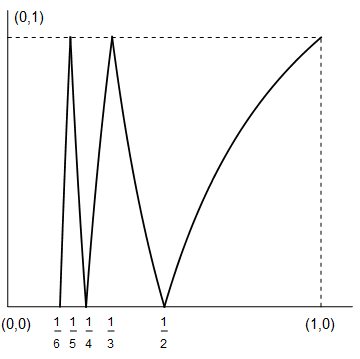}
\caption{The graphs of the maps $\Te$ and $\oTe$ }\label{Figure1}
\end{figure}
\end{center}

From general ergodic theoretical considerations (\cite{BM}), the infinite measure $(u+v)^{-2} du dv$ is
$\widetilde{T}_{E}$-invariant,
while $d\mu_{E} =\big( \int_{-1}^1 (u+v)^{-2} dv\big) du=\frac{2du}{(u-1)(u+1)}$ is $\Te$-invariant.

Conjugating through $J(x):=\frac{1}{x}$, one gets the customary ECF Gauss map
$\oTe:=J^{-1} \Te J$ (\cite{S1,S2,KL,Ce}), which acts on $[0,1]$ as
in formula \eqref{eq1.7},
with invariant measure $\nu_{E} =J_\ast \mu_{E} =
\frac{2dx}{(1-x)(1+x)}$.
The endomorphism $\oTe$ is exact in Rohlin's sense (this can be proved exactly as in the ECF situation using the approach from \cite{ScI}),
and $\nu_{E}$ is the unique $\sigma$-finite Lebesgue absolutely continuous
$\oTe$-invariant measure.

We introduce the matrices
\begin{equation*}
\Omega_k(u):=M(a_1,e_1) \cdots M (a_k,e_k) =\left( \begin{matrix} p_k & p_{k-1} e_k \\
q_k & q_{k-1} e_k \end{matrix}\right) ,
\end{equation*}
with determinant
\begin{equation}\label{eq2.1}
\det (\Omega_k(u)) = (p_k q_{k-1}-p_{k-1} q_k) e_k =(-e_1) \cdots (-e_k) ,
\end{equation}
and inverse
\begin{equation}\label{eq2.2}
\Omega_k (u)^{-1} = (-e_1)\cdots (-e_k) \left( \begin{matrix} q_{k-1} e_k & -p_{k-1} e_k \\
-q_k & p_k \end{matrix} \right) .
\end{equation}
The iterates of $\Te$ can be conveniently described as
\begin{equation}\label{eq2.3}
\Te^k (u) =[(a_{k+1},e_{k+1}) ,(a_{k+2},e_{k+2}),\ldots ] =\Omega_k(u)^{-1} u =
(-e_k) \frac{p_{k-1} -q_{k-1} u}{p_k -q_k u} ,
\end{equation}
leading to
\begin{equation}\label{eq2.4}
\Te (u) \Te^2 (u) \cdots \Te^k (u) =\frac{(-e_1) \cdots (-e_k)}{p_k-q_k u},\qquad \forall k\geq 1 ,
\end{equation}
and to
\begin{equation}\label{eq2.5}
u=\frac{e_k p_{k-1}+\Te^k (u)p_k}{e_k q_{k-1}+\Te^k (u) q_k} .
\end{equation}
Formula \eqref{eq2.3} also gives
\begin{equation*}
q_{k-1} e_k +\Te^k (u)q_k =e_k \bigg( q_{k-1}+\frac{p_{k-1}q_k -uq_{k-1} q_k}{uq_k -p_k} \bigg)
=\frac{(-e_1)\cdots (-e_k)}{p_k -uq_k} ,
\end{equation*}
leading to
\begin{equation*}
u-\frac{p_k}{q_k}= -\frac{(-e_1)\cdots (-e_k)}{q_{k-1}e_k +\Te^k (u) q_k} .
\end{equation*}
As $\Te^k (u)q_k+q_{k-1} e_k > q_k -q_{k-1}\geq 1$, this shows
\begin{equation*}
\operatorname{sgn} \bigg( u-\frac{p_k}{q_k}\bigg) = -(-e_1)\cdots (-e_k),\qquad \forall k\geq 1.
\end{equation*}

\subsection{$E$-reduced quadratic irrationals}
In the sequel we extend some properties of QIs from the RCF
setting (as considered in \cite{KO,On}) to the ECF setting.

\begin{remark}\label{rem7}
The following facts concerning the discriminant and the conjugate of some QIs
$u$ hold for every $\sigma\in \operatorname{GL}(2,\Z)$:
\begin{itemize}
\item[(A)] $\operatorname{disc}(\sigma u)=\operatorname{disc}(u)$.
\item[(B)] $(\sigma u)^* =\sigma u^*$.
\end{itemize}
\end{remark}

\begin{lemma}\label{lem8}
Let $\omega$ be a QI, and let $\sigma=\Big( \begin{smallmatrix} a & b \\ c & d \end{smallmatrix} \Big)
\in \operatorname{GL}(2,\Z)$ such that $\sigma \omega= \omega$.
\begin{itemize}
\item[(i)]
The eigenvalues of $\sigma$ are $\lambda_1 =c \omega +d$,
$\lambda_2 =\frac{\delta}{c\omega+d}$, where $\delta :=ad-bc \in \{ \pm 1\}$.
\item[(ii)]
The eigenvalues of $\sigma^{-1}=\delta \Big( \begin{smallmatrix} d & -b \\ -c & a \end{smallmatrix}\Big)$ are
$\mu_1 =\delta (-c\omega +a)=\frac{1}{\lambda_1}$ and $\mu_2 =\frac{1}{-c\omega +a} =\frac{1}{\lambda_2}$.
\item[(iii)]
$(c\omega+d)^*=c\omega^*+d$.
\end{itemize}
\end{lemma}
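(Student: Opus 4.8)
The plan is to exploit the single observation that the M\"obius fixed-point equation $\sigma\omega=\omega$ is equivalent to a genuine eigenvector relation for $\sigma$ acting on column vectors. Writing $\sigma\omega=\frac{a\omega+b}{c\omega+d}=\omega$ gives $a\omega+b=\omega(c\omega+d)$, and hence
\begin{equation*}
\sigma\begin{pmatrix}\omega\\1\end{pmatrix}=\begin{pmatrix}a\omega+b\\c\omega+d\end{pmatrix}=(c\omega+d)\begin{pmatrix}\omega\\1\end{pmatrix}.
\end{equation*}
Thus $\big(\begin{smallmatrix}\omega\\1\end{smallmatrix}\big)$ is an eigenvector of $\sigma$ with eigenvalue $\lambda_1=c\omega+d$, which settles the first assertion of (i). Since $\det\sigma=\delta\in\{\pm1\}$ equals the product $\lambda_1\lambda_2$ of the two eigenvalues, the remaining one is forced to be $\lambda_2=\delta/(c\omega+d)$. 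When $c=0$ the relation $\sigma\omega=\omega$ forces $\sigma=\pm I_2$ because $\omega$ is irrational, and the formulas persist, so the argument is uniform.

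For (ii) I would apply the very same reasoning to $\sigma^{-1}$. Inverting $\sigma\omega=\omega$ yields $\sigma^{-1}\omega=\omega$, so $\big(\begin{smallmatrix}\omega\\1\end{smallmatrix}\big)$ is again an eigenvector; reading off the bottom row of $\sigma^{-1}=\delta\big(\begin{smallmatrix}d&-b\\-c&a\end{smallmatrix}\big)$ gives the associated eigenvalue $\mu_1=(-\delta c)\omega+\delta a=\delta(a-c\omega)$. Because $\sigma^{-1}$ has the reciprocal eigenvalues of $\sigma$ on the same eigenvectors, this equals $1/\lambda_1$; equivalently one checks directly, using $c\omega^2=(a-d)\omega+b$ extracted from the fixed-point equation, that $(a-c\omega)(c\omega+d)=ad-bc=\delta$, which turns $\delta(a-c\omega)$ into $1/(c\omega+d)$. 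The second eigenvalue then follows from $\det(\sigma^{-1})=\delta=\mu_1\mu_2$, giving $\mu_2=\delta/\mu_1=1/(a-c\omega)=1/\lambda_2$.

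Part (iii) is a formal consequence of Galois conjugation being the nontrivial automorphism of the quadratic field $\Q(\omega)$ fixing $\Q$ pointwise: since $c,d\in\Z$ are fixed, applying it to $c\omega+d$ yields $c\omega^*+d$. Alternatively, this is exactly Remark~\ref{rem7}(B) applied to the affine map $z\mapsto cz+d$.

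All of this is elementary, so there is no genuine obstacle; the only point requiring a moment's care is the internal consistency of the two expressions displayed for each eigenvalue of $\sigma^{-1}$, which rests entirely on the identity $(a-c\omega)(c\omega+d)=\delta$ read off from $\sigma\omega=\omega$. Framing everything through the eigenvector $\big(\begin{smallmatrix}\omega\\1\end{smallmatrix}\big)$ has the advantage of proving all three parts at once while automatically covering the degenerate case $c=0$.
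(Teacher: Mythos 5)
Your proof is correct, and for parts (i)--(ii) it is essentially the paper's argument in different packaging: the paper verifies that $\lambda_1=c\omega+d$ and $\lambda_2=\delta/(c\omega+d)$ have the right product (the determinant) and that the trace identity $\lambda_1+\lambda_2=a+d$ is equivalent to $\sigma\omega=\omega$, whereas you exhibit $\bigl(\begin{smallmatrix}\omega\\ 1\end{smallmatrix}\bigr)$ as an explicit eigenvector with eigenvalue $c\omega+d$ and then use the determinant; both hinge on the same identity $c\omega^2=(a-d)\omega+b$, but your eigenvector formulation makes the reciprocal-eigenvalue statement in (ii) automatic (if $\sigma v=\lambda v$ then $\sigma^{-1}v=\lambda^{-1}v$) and is uniform in $c$, whereas the paper's (ii) rests on the same ``direct calculation'' $(a-c\omega)(c\omega+d)=\delta$ that you spell out. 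Where you genuinely diverge is (iii): the paper proves $(c\omega+d)^*=c\omega^*+d$ by comparing sums of roots of two quadratics --- $\eta+\eta^*=a+d$ from the characteristic polynomial of $\sigma$, and $\omega+\omega^*=(a-d)/c$ from the fixed-point equation --- an argument that uses the hypothesis $\sigma\omega=\omega$ and implicitly requires $c\neq 0$; you instead invoke the fact that conjugation is the nontrivial automorphism of $\Q(\omega)$ fixing $\Q$, which gives (iii) immediately, in greater generality (no fixed-point hypothesis is needed at all), and with no case distinction on $c$. That is the cleaner route. One small caveat: your ``alternative'' justification of (iii) via Remark~\ref{rem7}(B) applied to $z\mapsto cz+d$ does not literally apply, since $\bigl(\begin{smallmatrix}c&d\\ 0&1\end{smallmatrix}\bigr)$ has determinant $c$ and so need not lie in $\operatorname{GL}(2,\Z)$; this is harmless, because your primary field-automorphism argument is complete on its own.
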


\begin{proof}
(i) Clearly $\lambda_1 \lambda_2 =\det (\sigma)$, thus one only has to check that
$c\omega+d +\frac{\delta}{c\omega+d}=a+d$. This is immediately seen to be equivalent to
$c\omega(c\omega+d)=c(a\omega+b)$, or equivalently to $\sigma \omega=\omega$.

(ii) From the first part, $\operatorname{Eig} (\sigma ^{-1}) =\{ \frac{1}{\lambda_1},\frac{1}{\lambda_2}\}$.
More precisely, by direct calculation $\mu_1=\frac{1}{\lambda_1}$ is equivalent to $\sigma \omega=\omega$.

(iii) The number $\eta:=c\omega+d$ satisfies $\eta^2 -(a+d)\eta +ad-bc=0$, so
$\eta+\eta^*=a+d$. On the other hand the equality $c\omega^* +(d-a)\omega^* -b=0$ provides
$\omega +\omega^* =\frac{a-d}{c}$. This leads to $a-c\omega =d+c\omega^*$ and so
$\eta^* =a+d-\eta =c\omega^* +d$.
\end{proof}

Denote by ${\mathfrak N}(u)$ and respectively
${\mathfrak t} {\mathfrak r} (u)$ the norm and trace of the QI $u$ in the associated quadratic field.

\begin{lemma}\label{lem9}
Let $G$ be a subgroup of $\operatorname{GL}(2,\Z)$.
For every QI $\omega$, the map
\begin{equation*}
\Lambda_\omega :G_\omega \longrightarrow \R,\qquad
\Lambda_\omega \left( \begin{matrix} a & b \\ c & d \end{matrix} \right) :=c\omega +d ,
\end{equation*}
defines an injective group homomorphism on $G_\omega :=\{ \sigma\in G \mid \sigma\omega =\omega\}$ with
${\mathfrak N} \circ \Lambda_\omega =\det$ and ${\mathfrak t} {\mathfrak r} \circ \Lambda_\omega =\operatorname{Tr}$.
\end{lemma}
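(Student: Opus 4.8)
The plan is to exhibit a single eigenvector of $\sigma$ that simultaneously delivers the homomorphism property, injectivity, and the norm/trace identities. First I would record the elementary observation that for any $\sigma = \Big(\begin{smallmatrix} a & b \\ c & d\end{smallmatrix}\Big) \in G_\omega$ the column vector $\big(\begin{smallmatrix}\omega \\ 1\end{smallmatrix}\big)$ is an eigenvector: the relation $\sigma\omega = \omega$ reads $a\omega + b = \omega(c\omega + d)$, whence
\begin{equation*}
\sigma\begin{pmatrix}\omega \\ 1\end{pmatrix} = \begin{pmatrix} a\omega + b \\ c\omega + d\end{pmatrix} = (c\omega + d)\begin{pmatrix}\omega \\ 1\end{pmatrix} = \Lambda_\omega(\sigma)\begin{pmatrix}\omega \\ 1\end{pmatrix}.
\end{equation*}
This is just the eigenvalue $\lambda_1$ of Lemma \ref{lem8}(i), now packaged together with its eigenvector.

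The homomorphism property is then immediate. For $\sigma,\tau \in G_\omega$, applying $\sigma\tau$ to $\big(\begin{smallmatrix}\omega\\1\end{smallmatrix}\big)$ and invoking the eigenvector relation twice gives $\sigma\tau\big(\begin{smallmatrix}\omega\\1\end{smallmatrix}\big) = \Lambda_\omega(\sigma)\Lambda_\omega(\tau)\big(\begin{smallmatrix}\omega\\1\end{smallmatrix}\big)$; comparing this with $\Lambda_\omega(\sigma\tau)\big(\begin{smallmatrix}\omega\\1\end{smallmatrix}\big)$ and cancelling the nonzero vector yields $\Lambda_\omega(\sigma\tau) = \Lambda_\omega(\sigma)\Lambda_\omega(\tau)$. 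For injectivity I would compute the kernel: if $\Lambda_\omega(\sigma) = c\omega + d = 1$ with $c,d \in \Z$ and $\omega$ irrational, then $c = 0$ and $d = 1$; feeding this back into $\sigma\omega = \omega$ forces $a = 1$ and $b = 0$, so $\sigma = I_2$ and $\ker \Lambda_\omega$ is trivial.

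For the norm and trace identities I would set $\eta := \Lambda_\omega(\sigma) = c\omega + d$ and again appeal to Lemma \ref{lem8}. By part (i), $\eta$ is an eigenvalue of $\sigma$, hence a root of the characteristic polynomial $X^2 - \operatorname{Tr}(\sigma)X + \det(\sigma)$, which has integer coefficients. By part (iii) the Galois conjugate of $\eta$ in $\Q(\omega)$ is $\eta^* = c\omega^* + d$; applying the nontrivial automorphism of $\Q(\omega)$ to the characteristic equation (whose rational coefficients are fixed) shows that $\eta^*$ is also a root. Thus $\{\eta,\eta^*\}$ is the full root set, and Vieta's relations give ${\mathfrak t}{\mathfrak r}(\eta) = \eta + \eta^* = \operatorname{Tr}(\sigma)$ and ${\mathfrak N}(\eta) = \eta\eta^* = \det(\sigma)$. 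This persists in the degenerate case $\eta \in \Q$ (where $\eta = \eta^*$ is a double root, forcing $\sigma = \pm I_2$), since then ${\mathfrak t}{\mathfrak r}(\eta) = 2\eta$ and ${\mathfrak N}(\eta) = \eta^2$ match $\operatorname{Tr}$ and $\det$ directly.

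The only genuinely delicate point is this last one: reconciling the field-theoretic norm and trace, defined through Galois conjugation in $\Q(\omega)$, with the matrix determinant and trace. Everything hinges on the identity $\eta^* = c\omega^* + d$ from Lemma \ref{lem8}(iii), which guarantees that conjugation sends the eigenvalue $c\omega+d$ to the companion eigenvalue rather than to an unrelated element of the field; once this is secured, the two identities are forced by Vieta's formulas. The remaining steps are purely formal.
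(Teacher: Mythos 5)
Your proposal is correct and takes essentially the same route as the paper: the multiplicativity of $\Lambda_\omega$ comes from the fact that $\big(\begin{smallmatrix}\omega\\ 1\end{smallmatrix}\big)$ is a common eigenvector of $G_\omega$ with eigenvalue $c\omega+d$ (the paper performs the equivalent bottom-row matrix computation directly), the injectivity argument is identical, and the norm/trace identities rest in both cases on Lemma \ref{lem8}(iii) combined with Vieta's formulas for the characteristic polynomial. If anything you are slightly more careful than the paper, since you explicitly dispose of the degenerate case $c=0$, i.e.\ $\sigma=\pm I_2$, where $\eta=\eta^*$ lies in $\Q$ and the two-distinct-roots argument must be replaced by a direct check.
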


\begin{proof}
Let $\sigma=\Big( \begin{smallmatrix} a & b \\ c & d \end{smallmatrix} \Big) \in G$ and
$\sigma^\prime=\Big( \begin{smallmatrix} a^\prime & b^\prime  \\ c^\prime & d^\prime \end{smallmatrix} \Big) \in G_\omega$.
By definition, $\Lambda_\omega$ maps the product matrix
$\sigma\sigma^\prime$ into $(a^\prime c+c^\prime d)\omega +b^\prime c+d^\prime d$. This coincides with
$(c\omega +d)(c^\prime \omega+d^\prime)=\Lambda_\omega (\sigma)\Lambda_\omega (\sigma^\prime)$ as a result of
$\frac{a^\prime \omega+b^\prime}{c^\prime \omega+d^\prime}=\omega$.

Let $\sigma\in \operatorname{Ker} (\Lambda_\omega)$, so $c\omega+d=1$. The irrationality of $\omega$ yields $c=0$ and $d=1$.
Then $\omega =a\omega +b$, hence $a=1$ and $b=0$.

The equalities ${\mathfrak N} \circ \Lambda_\omega=\det$ and ${\mathfrak t} {\mathfrak r} \circ \Lambda_\omega =\operatorname{Tr}$ follow
from $(c\omega+d)^*=c\omega^*+d$.
\end{proof}

In the sequel we will consider an ECF-periodic QI $\omega =[\,\overline{(a_1,e_1),\ldots,(a_n,e_n)}\,] >1$
with $n=\operatorname{per} (\omega)$. Set
\begin{equation*}
\begin{split}
\delta_n =\delta_n (\omega) & := (-e_1)\cdots (-e_n) , \qquad
\ell=\operatorname{eper}(\omega) := \begin{cases}
n & \mbox{\rm if $\delta_n =+1$} \\
2n & \mbox{\rm if $\delta_n =-1$,} \end{cases} \\
\Omega_E (\omega) & :=\Omega_n(\omega)= M(a_1,e_1) \cdots M(a_n,e_n) =\left( \begin{matrix}
p_n & p_{n-1} e_n \\ q_n & q_{n-1} e_n \end{matrix} \right) , \\
\widetilde{\Omega}_E (\omega) & :=
\begin{cases}
\Omega_E (\omega)  & \mbox{\rm if $\delta_n =+1$} \\
\Omega_E (\omega)^2 & \mbox{\rm if $\delta_n =-1$.} \end{cases}
\end{split}
\end{equation*}
Employing \eqref{eq2.4}, Lemma \ref{lem8}, $\Te^n (\omega) =\omega$, and $\delta_\ell=+1$, we infer that the spectral radius of
$\widetilde{\Omega}_E (\omega)$ is
\begin{equation*}
{\mathfrak r} ( \widetilde{\Omega}_E (\omega))  =\omega \Te(\omega) \cdots \Te^{\ep-1} (\omega) =
q_{\ell} \omega +q_{\ell -1} e_{\ell} =\frac{1}{p_{\ell} -q_{\ell} \omega} >1 .
\end{equation*}
As it will be seen later, formula \eqref{eq2.4} provides an ECF-analogue of Smith's formula (\cite{Sm}).

The following ECF-analogue of the Galois formula \eqref{eq1.2} is known (\cite{BM,KL}).
Here, we give yet another proof, using our setting and the
dual ECF-expansion on $[-1,1]$ (\cite{S1,S2}).

\begin{lemma}\label{lem10}
The conjugate $\omega^\ast$ of $\omega =[ \,\overline{(a_1,e_1),\ldots, (a_n,e_n)}\,] >1$ is given by
\begin{equation*}
-\omega^\ast = \llangle \,\overline{(a_n,e_n),\ldots ,(a_1,e_1)}\,\rrangle \in (-1,1) .
\end{equation*}
\end{lemma}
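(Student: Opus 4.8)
The plan is to read off both $\omega$ and the dual value $\theta:=\llangle\,\overline{(a_n,e_n),\ldots,(a_1,e_1)}\,\rrangle$ as fixed points of two explicit Möbius transformations and then compare the associated quadratics. First I would record that, by \eqref{eq2.3} together with $\Te^{\,n}(\omega)=\omega$, the relation $\omega=\Omega_n(\omega)^{-1}\omega$ holds, so $\omega$ is a fixed point of the Möbius action of $\sigma:=\Omega_E(\omega)=\Omega_n(\omega)=\big(\begin{smallmatrix} p_n & p_{n-1}e_n\\ q_n & q_{n-1}e_n\end{smallmatrix}\big)$. Since $\omega$ is a genuine QI, its conjugate $\omega^\ast$ is the second fixed point of $\sigma$, so $\{\omega,\omega^\ast\}$ are exactly the two roots of
\begin{equation*}
q_n X^2 + (q_{n-1}e_n - p_n)X - p_{n-1}e_n = 0 .
\end{equation*}

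Next I would encode the dual expansion in matrix form. Writing $N(b,f):=\big(\begin{smallmatrix}0 & f\\ 1 & b\end{smallmatrix}\big)$, the Möbius action $N(b,f)z=\frac{f}{z+b}$ shows that one dual digit is applied by $N$, while a direct check gives $N(b,f)=J_2\, M(b,f)^{T} J_2$. Hence, after one full period, the $\theta$-expansion is governed by
\begin{equation*}
N(a_n,e_n)\cdots N(a_1,e_1)
= J_2\big(M(a_1,e_1)\cdots M(a_n,e_n)\big)^{T} J_2
= J_2\,\sigma^{T} J_2
= \begin{pmatrix} q_{n-1}e_n & p_{n-1}e_n\\ q_n & p_n\end{pmatrix} =:\tau ,
\end{equation*}
the interior factors $J_2 J_2 = I_2$ telescoping and the transpose reversing the order of the product. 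Thus $\theta$ is a fixed point of $\tau$, and its fixed-point equation is $q_n X^2 + (p_n - q_{n-1}e_n)X - p_{n-1}e_n = 0$, which is precisely the quadratic above after the substitution $X\mapsto -X$. Consequently the two roots of the $\theta$-quadratic are $-\omega$ and $-\omega^\ast$.

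Finally I would identify which root equals $\theta$. Because each $a_i\in 2\N$ satisfies $a_i\geq 2$ and $\lvert e_i\rvert=1$, whenever $\lvert z\rvert<1$ we have $\lvert N(a_i,e_i)z\rvert=\frac{1}{\lvert z+a_i\rvert}<1$; thus each $N(a_i,e_i)$, and hence $\tau$, maps $(-1,1)$ into itself, which forces the (unique) fixed point of $\tau$ lying in $(-1,1)$ to be the attracting one. Since $\omega>1$ gives $-\omega<-1$, the only admissible root is $\theta=-\omega^\ast$, which is the desired identity and simultaneously yields $\omega^\ast\in(-1,1)$. The one point demanding care is exactly this last step: one must justify that the formal periodic expression $\theta$ is the attracting fixed point of $\tau$ inside $(-1,1)$, rather than the repelling root $-\omega$. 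The contraction estimate above settles this (it is the only genuinely analytic, as opposed to algebraic, ingredient), but it should be stated with some care, since for $a_i=2$ the images approach the boundary and the uniform contraction needs the standard periodic-continued-fraction convergence argument.
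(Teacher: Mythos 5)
Your proposal is correct, and its skeleton is the same as the paper's: both proofs exhibit the periodic dual value and $-\omega^*$ as fixed points of the same matrix $\tau=\Big(\begin{smallmatrix} q_{n-1}e_n & p_{n-1}e_n \\ q_n & p_n \end{smallmatrix}\Big)$, and then exclude the other root of the fixed-point quadratic by a size comparison. Two of your intermediate devices, however, genuinely differ from the paper's and are arguably cleaner. First, the paper proves that the dual value is fixed by $\tau$ (its equation (2.8)) by induction on the products $\Big(\begin{smallmatrix} 0 & e_k \\ 1 & a_k \end{smallmatrix}\Big)\Big(\begin{smallmatrix} q_{k-2}e_{k-1} & p_{k-2}e_{k-1} \\ q_{k-1} & p_{k-1} \end{smallmatrix}\Big)=\Big(\begin{smallmatrix} q_{k-1}e_k & p_{k-1}e_k \\ q_k & p_k \end{smallmatrix}\Big)$, whereas your conjugation identity $N(b,f)=J_2M(b,f)^TJ_2$, with the interior factors $J_2J_2=I_2$ telescoping, yields $\tau=J_2\sigma^TJ_2$ in one line. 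Second, the paper pins down $\widetilde{\omega}=-\omega^*$ by a product-of-roots contradiction (if $\widetilde{\omega}\neq-\omega^*$, then $-\omega^*\widetilde{\omega}=-\tfrac{p_{n-1}e_n}{q_n}=\omega\omega^*$, forcing $\widetilde{\omega}=-\omega$, impossible since $\omega>1>\lvert\widetilde{\omega}\rvert$), whereas you observe that the $\tau$-quadratic is the $\sigma$-quadratic under $X\mapsto -X$, so its roots are exactly $-\omega$ and $-\omega^*$; these are equivalent, but your version makes the conclusion immediate. The one place your write-up wobbles is the closing step: the appeal to the \emph{attracting} fixed point is both more than is needed and under-justified as stated (attraction requires knowing $\tau$ is hyperbolic, which does not follow merely from $\tau$ preserving $(-1,1)$, and your own caveat about $a_i=2$ shows the contraction is not uniform). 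All the identification actually requires is that $\theta$ is a well-defined number of $[-1,1]$ fixed by $\tau$: this is precisely what the paper imports from the definition of the dual ECF expansion of irrationals in $[-1,1]$ (Schweiger), and your interval-invariance estimate delivers the same once convergence of the truncations of the periodic expansion — the standard fact you flag — is granted, since then $\theta=\lim_m \tau^m(0)\in[-1,1]$ and $\tau\theta=\theta$ by continuity. Given that, $-\omega<-1$ rules out the root $-\omega$, and irrationality of $\theta$ upgrades $[-1,1]$ to $(-1,1)$; no dynamical input is needed.
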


\begin{proof}
The equality $\Omega_E (\omega) \omega =\omega$ and formulas \eqref{eq2.2} and
\eqref{eq2.3} for $\Omega_E (\omega)^{-1}=\Omega_n(\omega)^{-1}$ show
\begin{equation}\label{eq2.6}
\left( \begin{matrix} q_{n-1} e_n & -p_{n-1} e_n \\ -q_n & p_n \end{matrix} \right) \omega =
\left( \begin{matrix} -q_{n-1} e_n & p_{n-1} e_n \\ q_n & -p_n \end{matrix} \right) \omega =\omega .
\end{equation}
The conjugate $\omega^*$ must also be fixed by the matrix
$\Big( \begin{smallmatrix} -q_{n-1} e_n & p_{n-1} e_n \\ q_n & -p_n \end{smallmatrix} \Big)$, and so
\begin{equation}\label{eq2.7}
\left( \begin{matrix} q_{n-1} e_n & p_{n-1} e_n \\ q_n & p_n \end{matrix} \right) (-\omega^*)=-\omega^*.
\end{equation}

Denote $\widetilde{\omega} :=\llangle \,\overline{(a_n,e_n),\ldots ,(a_1,e_1)}\,\rrangle$. Since
$\Big( \begin{smallmatrix} 0 & e_1 \\ 1 & a_1 \end{smallmatrix} \Big) x=\frac{e_1}{a_1+x}$, we have
$\widetilde{\omega} =\Big( \begin{smallmatrix} 0 & e_n \\ 1 & a_n \end{smallmatrix} \Big) \cdots
\Big( \begin{smallmatrix} 0 & e_1 \\ 1 & a_1 \end{smallmatrix} \Big) \widetilde{\omega} $. Since
$\Big( \begin{smallmatrix} 0 & e_1 \\ 1 & a_1 \end{smallmatrix} \Big) =
\Big( \begin{smallmatrix} q_0 e_1 & p_0 e_1 \\ q_1 & p_1 \end{smallmatrix} \Big) $ and
$\Big( \begin{smallmatrix} 0 & e_k \\ 1 & a_k \end{smallmatrix} \Big)
\Big( \begin{smallmatrix} q_{k-2} e_{k-1} & p_{k-2} e_{k-1} \\ q_{k-1} & p_{k-1} \end{smallmatrix} \Big)=
\Big( \begin{smallmatrix} q_{k-1} e_k & p_{k-1} e_k \\ q_k & p_k \end{smallmatrix} \Big)$, we gather
\begin{equation}\label{eq2.8}
\left( \begin{matrix} q_{n-1} e_n & p_{n-1} e_n \\ q_n & p_n \end{matrix} \right) \widetilde{\omega}=\widetilde{\omega} .
\end{equation}
Suppose that $\widetilde{\omega}\neq -\omega^*$. Then, from \eqref{eq2.7}, \eqref{eq2.8} and \eqref{eq2.6} we infer
$-\omega^* \widetilde{\omega} =-\frac{p_{n-1}e_n}{q_n}=\omega \omega^*$. This leads to a contradiction because $\omega >1 > |\widetilde{\omega}|$,
and so $-\omega^*=\widetilde{\omega}$.
\end{proof}

\begin{definition}\label{def11}
Given $\Delta \in \N$, $\Delta$ not a square, denote
\begin{equation*}
\X(\Delta):=\{ \omega \ \mbox{QI} \mid \operatorname{disc}(\omega)=\Delta\} .
\end{equation*}
The QI $\omega >1$ is called {\em $E$-reduced} if $-1<\omega^* <1$. We denote
\begin{equation*}
\RRR_{\e} (\Delta):=\{ \omega \in \X(\Delta) \mid \omega \ \mbox{$E$-reduced}\}\quad \mbox{and} \quad
\RRR_{\e}:=\bigcup\limits_{\Delta >0} \RRR_{\e} (\Delta) .
\end{equation*}
\end{definition}

\begin{lemma}\label{lem12}
The set $\RRR_{\e} (\Delta)$ is finite.
\end{lemma}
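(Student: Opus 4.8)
The plan is to parametrize $E$-reduced QIs of discriminant $\Delta$ by integral quadratic forms and to show that the $E$-reduced condition forces the coefficients of the form to be bounded in terms of $\Delta$. To each $\omega \in \RRR_{\e}(\Delta)$ I associate its primitive integral minimal polynomial $f(x) = ax^2 + bx + c$, with $a > 0$, $\gcd(a,b,c) = 1$, and $b^2 - 4ac = \operatorname{disc}(\omega) = \Delta$; the two roots of $f$ are $\omega$ and $\omega^*$, and since $\omega > 1 > \omega^*$ the root $\omega$ is the larger one. Distinct elements of $\RRR_{\e}(\Delta)$ yield distinct triples $(a,b,c)$, so it suffices to bound the number of admissible triples.

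The second step is to translate the defining inequalities $\omega > 1$ and $-1 < \omega^* < 1$ into sign conditions on $f$ at the points $\pm 1$. Since $a > 0$, writing $f(x) = a(x - \omega)(x - \omega^*)$ gives $f(1) = a(1-\omega)(1-\omega^*) < 0$ and $f(-1) = a(1+\omega)(1+\omega^*) > 0$; conversely, because $a>0$ forces $f$ to open upward and $\omega^* < \omega$, the inequality $f(1) < 0$ recovers $\omega^* < 1 < \omega$ and then $f(-1) > 0$ recovers $-1 < \omega^*$, i.e. exactly that $\omega$ is $E$-reduced. As $f(\pm 1) \in \Z$, the strict inequalities upgrade to $f(1) \le -1$ and $f(-1) \ge 1$.

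The key identity is $f(1)f(-1) = (a+c)^2 - b^2 = (a-c)^2 - \Delta$, obtained from $b^2 = \Delta + 4ac$. Since $f(1)f(-1) \le -1$, this yields $(a-c)^2 = \Delta + f(1)f(-1) < \Delta$, so $|a-c| < \sqrt{\Delta}$ takes finitely many integer values, and moreover $|f(1)f(-1)| = \Delta - (a-c)^2 \le \Delta$. Combined with $|f(1)| \ge 1$ and $|f(-1)| \ge 1$, this forces $|f(1)| \le \Delta$ and $|f(-1)| \le \Delta$. Then $b = \tfrac12(f(1) - f(-1))$ and $a + c = \tfrac12(f(1) + f(-1))$ are confined to finite sets, while $a - c = \pm\sqrt{\Delta + f(1)f(-1)}$ takes at most two values; hence there are only finitely many admissible triples $(a,b,c)$, and therefore finitely many $\omega$.

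The main obstacle to guard against is the temptation to argue purely from the location of the roots: the $E$-reduced conditions only place $\omega$ in the bounded interval $(1, 1+\sqrt{\Delta})$ and $\omega^*$ in $(-1,1)$, and quadratic irrationals are dense in $\R$, so boundedness of the roots alone is nowhere near sufficient. Finiteness must come from the fixed-discriminant constraint, which is precisely what the evaluation identity $f(1)f(-1) = (a-c)^2 - \Delta$ converts into bounds on the integer coefficients. The only remaining care is to confirm that the two sign conditions at $\pm 1$ are genuinely equivalent to $E$-reducedness --- in particular that $f(-1) > 0$ yields $\omega^* > -1$ rather than some constraint on $\omega$ --- which is immediate from $a > 0$ together with the ordering $\omega^* < \omega$.
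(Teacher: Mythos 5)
Your proof is correct, and while it lives in the same elementary world as the paper's argument --- bounding the integer coefficients of the minimal polynomial once the discriminant is fixed --- the execution is genuinely different and arguably cleaner. The paper does not treat the strip $\omega^* \in (-1,1)$ uniformly: it first discards the subset with $\omega^* \in (-1,0)$ by citing the well-known finiteness of classically reduced QIs, and only works out the case $\omega > 1 > \omega^* > 0$. There it derives $|2A+B| < \sqrt{\Delta}$ and $(A-C)^2 < \Delta$ (the latter is exactly your $|a-c| < \sqrt{\Delta}$), and then bounds $A$ through an identity relating $A(\beta-\alpha)$ to $\Delta - \alpha^2$, where $\alpha := 2A+B$ and $\beta := A-C$; this step requires $\beta \neq \alpha$, i.e. that $\Delta$ is not a perfect square, so that one can divide. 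You instead handle all of $\RRR_{\e}(\Delta)$ at once via the sign conditions $f(1) < 0 < f(-1)$ and close the argument multiplicatively: the evaluation identity $f(1)f(-1) = (a+c)^2 - b^2 = (a-c)^2 - \Delta$, combined with integrality ($|f(\pm 1)| \geq 1$), yields $|f(\pm 1)| \leq \Delta$, after which $b = \tfrac{1}{2}\bigl(f(1)-f(-1)\bigr)$ and $a+c = \tfrac{1}{2}\bigl(f(1)+f(-1)\bigr)$ lie in finite sets and $(a-c)^2$ is pinned down by $f(1)f(-1)$. What this buys: your proof is self-contained (no appeal to the classical reduced case), needs no case split, and replaces the paper's division step by a purely multiplicative bound, the only inputs being integrality of $f(\pm 1)$ and the strictness of the defining inequalities. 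Your verification that the two sign conditions at $\pm 1$ characterize $E$-reducedness, and your caution that root-location alone could never give finiteness, are both accurate.
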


\begin{proof}
Finiteness of the set $\{ \omega \in \X(\Delta) \mid \omega >1, 0> \omega^*>-1\}$ is well known,
so it suffices to show that there are only finitely many numbers $\omega \in \X(\Delta)$ such that
$\omega >1>\omega^* >0$. Let $AX^2+BX+C$ be the minimal polynomial of such $\omega$, with $A>0$ and $(A,B,C)=1$, $\Delta=B^2-4AC$.
From $\omega\omega^*=\frac{C}{A}$ and $\omega +\omega^*=-\frac{B}{A}$ we infer $C>0$, respectively $B<-A<0$.
The inequalities $\omega=\frac{-B+\sqrt{\Delta}}{2A} > 1 > \omega^* =\frac{-B-\sqrt{\Delta}}{2A} >0$ yield
\begin{equation}\label{eq2.9}
-\sqrt{\Delta} < 2A+B  <\sqrt{\Delta} .
\end{equation}
After squaring and dividing by $A$, this leads to $0<A+C< -B$. Squaring again we get $(A-C)^2 <\Delta$, or
\begin{equation}\label{eq2.10}
-\sqrt{\Delta} < A-C < \sqrt{\Delta} .
\end{equation}
Upon \eqref{eq2.9} and \eqref{eq2.10}, we can write $2A+B=\alpha$, $A-C=\beta$ with $\lvert\alpha\rvert,\lvert\beta\rvert < \sqrt{\Delta}$,
and so $(\alpha-2A)^2 =B^2 =\Delta+4A(A-\beta)$, or equivalently $A(\beta-\alpha)=\Delta -\alpha^2$.
Since $\Delta$ is not a square (so $\beta \neq \alpha$), this gives $0<A =\frac{\lvert \Delta-\alpha^2\rvert}{\lvert\beta -\alpha \rvert} \leq \Delta$,
and \eqref{eq2.9} and \eqref{eq2.10} show that $B$ and $C$ can also take only finitely many values.
\end{proof}

\begin{lemma}\label{lem13}
If $\omega \in \RRR_{\e} (\Delta)$, then $\beta:=T_E (\omega) \in \RRR_{\e} (\Delta)$.
\end{lemma}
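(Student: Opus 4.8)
The plan is to realize $\beta=T_E(\omega)$ as the image of $\omega$ under a single matrix in $\operatorname{GL}(2,\Z)$ and then to verify the three required properties directly: equality of discriminants, $\beta>1$, and $\beta^*\in(-1,1)$. All of this should follow from the explicit formulas for $a_1,e_1$ together with Remark~\ref{rem7}, with no ergodic input needed.

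First I would write $\beta=M(a_1,e_1)^{-1}\omega=\tfrac{e_1}{\omega-a_1}$, where $a_1=2\lfloor\tfrac{\omega+1}{2}\rfloor$ and $e_1=\operatorname{sgn}(\omega-a_1)$. Since $\omega>1$ forces $a_1\geq 2$, and since $\det\big(M(a_1,e_1)^{-1}\big)=-e_1\in\{\pm 1\}$, the matrix $M(a_1,e_1)^{-1}$ lies in $\operatorname{GL}(2,\Z)$. Remark~\ref{rem7}(A) then gives at once $\operatorname{disc}(\beta)=\operatorname{disc}(\omega)=\Delta$, so it remains only to show that $\beta$ is $E$-reduced.

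For the bound $\beta>1$, I would observe that the definition of $a_1$ means $a_1-1\leq\omega<a_1+1$, and irrationality of $\omega$ rules out the left endpoint, so $0<|\omega-a_1|<1$. Because $e_1(\omega-a_1)=|\omega-a_1|>0$, this yields $\beta=\tfrac{1}{|\omega-a_1|}>1$. For the conjugate, I would apply Remark~\ref{rem7}(B) to the same $\operatorname{GL}(2,\Z)$-matrix, obtaining $\beta^*=\tfrac{e_1}{\omega^*-a_1}$. Since $\omega\in\RRR_E(\Delta)$ we have $\omega^*\in(-1,1)$, and since $a_1\geq 2$ we get $\omega^*-a_1<1-a_1\leq -1$, so $|\omega^*-a_1|>1$ and hence $|\beta^*|=\tfrac{1}{|\omega^*-a_1|}<1$. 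Combining the three facts shows $\beta\in\RRR_E(\Delta)$.

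I expect no genuine obstacle here; the argument is elementary bookkeeping. The only points requiring care are to use that $\ast$ commutes with the $\operatorname{GL}(2,\Z)$-action (Remark~\ref{rem7}(B)) rather than recomputing the minimal polynomial of $\beta$, to track the sign of $e_1$ so that $e_1(\omega-a_1)$ and $\tfrac{e_1}{\omega-a_1}$ are genuinely positive, and to invoke $a_1\geq 2$ (a consequence of $\omega>1$) so that $\omega^*-a_1$ is pushed \emph{strictly} below $-1$.
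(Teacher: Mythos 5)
Your proposal is correct and follows essentially the same route as the paper: realize $\beta$ as the image of $\omega$ under the $\operatorname{GL}(2,\Z)$ matrix $M(a_1,e_1)^{-1}$, invoke invariance of the discriminant and compatibility of conjugation with the Möbius action, and then use $\omega^*\in(-1,1)$ together with $a_1\geq 2$ to force $\lvert\beta^*\rvert<1$. The only cosmetic difference is that you verify $\beta>1$ explicitly from $0<\lvert\omega-a_1\rvert<1$, whereas the paper takes this for granted from the shift action on ECF expansions, and the paper runs the conjugate inequality in the equivalent form $\omega^*=a_1+e_1/\beta^*$ rather than solving for $\beta^*$.
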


\begin{proof}
Let $\omega =[(a_1,e_1),(a_2,e_2),\ldots ]\in \RRR_{\e} (\Delta)$. Then
$\beta =\Big( \begin{smallmatrix} 0 & 1 \\ e_1 & -e_1 a_1 \end{smallmatrix}\Big) \omega$ and
$\operatorname{disc}(\beta)=\operatorname{disc} (\omega)=\Delta$. From $\omega=a_1+\frac{e_1}{\beta}$
it follows that $a_1+\frac{e_1}{\beta^*} =\omega^* \in (-1,1)$, showing $\frac{e_1}{\beta^*} <1-a_1$. Hence
$\frac{1}{\lvert \beta^*\rvert} > a_1-1 \geq 1$, showing $\beta^* \in (-1,1)$.
\end{proof}

\begin{proposition}\label{prop14}
For every $\omega \in \X(\Delta)$ the following are equivalent:
\begin{itemize}
\item[(i)] $\omega\in \RRR_{\e} (\Delta)$ .
\item[(ii)] $ECF(\omega)$ is periodic.
\item[(iii)] $\omega$ is a periodic point of the map $T_E$.
\end{itemize}
\end{proposition}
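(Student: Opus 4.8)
The plan is to establish the equivalences by proving (ii) $\Leftrightarrow$ (iii) directly and then closing the loop with (ii) $\Rightarrow$ (i) and (i) $\Rightarrow$ (iii). The equivalence of (ii) and (iii) is essentially a restatement: since $\Te$ acts as the one-sided shift on ECF-digits, $\Te^n(\omega)=[(a_{n+1},e_{n+1}),(a_{n+2},e_{n+2}),\ldots]$, so by uniqueness of the ECF-expansion the identity $\Te^n(\omega)=\omega$ holds for some $n\ge 1$ precisely when the digit sequence is purely periodic. This pins down (ii) $\Leftrightarrow$ (iii) immediately and shows that the real content is relating periodicity to $E$-reducedness.

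For (ii) $\Rightarrow$ (i), suppose $\omega=[\,\overline{(a_1,e_1),\ldots,(a_n,e_n)}\,]$. Then $\omega>1$, since each inverse branch $x\mapsto a_1+e_1/x$ with $a_1\ge 2$ keeps the value above $1$ and $\omega$ is irrational, while Lemma \ref{lem10} gives $-\omega^*=\llangle\,\overline{(a_n,e_n),\ldots,(a_1,e_1)}\,\rrangle\in(-1,1)$. Hence $\omega>1$ and $\omega^*\in(-1,1)$, that is $\omega\in\RRR_{\e}(\Delta)$.

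The substantive direction is (i) $\Rightarrow$ (iii), and I would deduce it from three properties of the finite set $\RRR_{\e}(\Delta)$: it is finite (Lemma \ref{lem12}), it is forward-invariant under $\Te$ (Lemma \ref{lem13}), and $\Te$ is injective on it. Granting injectivity, $\Te$ restricts to an injective self-map of a finite set, hence a bijection, i.e. a permutation of $\RRR_{\e}(\Delta)$; every point of such a permutation is periodic, so $\Te^n(\omega)=\omega$ for some $n\ge 1$, which is (iii).

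The main obstacle is precisely this injectivity: finiteness and invariance alone yield only eventual periodicity ($\Te^i\omega=\Te^j\omega$ for some $i<j$), and one must rule out a pre-periodic tail. The cleanest route is to intertwine $\Te$ with the invertible natural extension $\widetilde{T}_E$ through the injection $\Phi(\omega):=(\omega,-\omega^*)$, which sends $\RRR_{\e}$ into $[1,\infty)\times(-1,1)$. Using Remark \ref{rem7}(B) to conjugate the matrix $\big(\begin{smallmatrix} 0 & e_1 \\ 1 & -a_1 \end{smallmatrix}\big)$, one checks that the second coordinate of $\widetilde{T}_E(\omega,-\omega^*)$ equals $e_1/(a_1-\omega^*)=-(\Te\omega)^*$ while the first is $\Te\omega$, so that $\Phi\circ\Te=\widetilde{T}_E\circ\Phi$ on $\RRR_{\e}$; as $\widetilde{T}_E$ is an automorphism and $\Phi$ is injective, $\Te$ is injective on $\RRR_{\e}$. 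Equivalently, and without invoking $\widetilde{T}_E$, one can recover the first digit directly from the conjugate: for $\beta=\Te\omega\in\RRR_{\e}(\Delta)$ one has $\beta^*\in(-1,1)$, and from $\omega^*=a_1+e_1/\beta^*\in(-1,1)$ with $a_1\in 2\N$ the relation $e_1/\beta^*<-1$ forces $e_1=-\operatorname{sgn}(\beta^*)$, after which $|\omega_1^*-\omega_2^*|<2$ forces equality of the even integers $a_1$; thus a common value $\beta$ determines $\omega$ uniquely.
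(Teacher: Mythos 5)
Your proof is correct, and its mathematical core coincides with the paper's: both handle (ii)$\Rightarrow$(i) via Lemma \ref{lem10}, both settle (ii)$\Leftrightarrow$(iii) through the shift action and uniqueness of ECF-expansions, and both reduce the hard direction to finiteness (Lemma \ref{lem12}), forward invariance (Lemma \ref{lem13}), and the observation that a point of $\RRR_{\e}(\Delta)$ is determined by its $T_E$-image together with the constraint on conjugates: the sign of $\beta^*$ pins down $e_1$, and the fact that two numbers in $(-1,1)$ differ by less than $2$ pins down the even integer $a_1$. The only difference is packaging. The paper first extracts eventual periodicity $T_E^r\omega=T_E^{r+n}\omega$ from finiteness and then runs a backward induction on $r$, whose induction step is literally your injectivity claim; you isolate injectivity once and invoke the permutation principle for an injective self-map of a finite set. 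Yours is a slightly cleaner wrapper around the same argument, not a different proof.

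One caution about your first route to injectivity: the assertion that $\widetilde{T}_E$ ``is an automorphism'' is, as the paper uses it, a measure-theoretic statement (invertibility modulo null sets), and $\Phi(\RRR_{\e})$ is countable, hence null, so that citation does not by itself give pointwise injectivity of $\widetilde{T}_E$ on $\Phi(\RRR_{\e})$. Pointwise invertibility there amounts to recovering the digit $(a_1,e_1)$ from the second coordinate of the image (via uniqueness of the dual ECF-expansion), which is exactly the content of your second, self-contained argument. So the intertwining route, as literally stated, has a gap that the ``equivalently'' argument fills; it is the latter that actually completes the proof, and it is also the step where the paper and your proposal coincide.
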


\begin{proof}
(ii) $\Longrightarrow$ (i) follows from Lemma \ref{lem10}. If $ECF(\omega)$ is periodic,
i.e. $\omega =[\,\overline{(a_1,e_1),\ldots,(a_n,e_n)}\,]>1$,
then $\omega^* =-\llangle \,\overline{(a_n,e_n),\ldots,(a_1,e_1)}\,\rrangle \in (-1,1)$,
hence $\omega \in \RRR_{\e} (\Delta)$.

(i) $\Longrightarrow$ (ii) Let $\omega =[(a_1,e_1),(a_2,e_2),\ldots ] \in \RRR_{\e} (\Delta)$.
Since the set $\RRR_{\e} (\Delta)$ is finite, there exist $r\geq 0$ and $n\geq 1$ such that $\Te^r (\omega) =\Te^{r+n} (\omega)$.
The uniqueness of ECF-expansions then shows $(a_s,e_s)=(a_{s+n},e_{s+n})$, $\forall s\geq r+1$.

It remains to show that if $k\geq 1$ and $\Te^k (\omega) =\Te^{k+n} (\omega)$, then
$\Te^{k-1}(\omega) =\Te^{k+n-1}(\omega)$. Set $\omega_i:=\Te^i (\omega)$.
By Lemma \ref{lem13}, $\omega_{k-1}=[(a_k,e_k),\omega_k] =a_k +\frac{e_k}{\omega_k} \in \RRR_E(\Delta)$,
whence $\omega_{k-1}^* =a_k+\frac{e_k}{\omega_k^*} \in (-1,1)$.
Similarly, from $\omega_{k+n-1}=[(a_{k+n},e_{k+n}),\omega_{k+n}]=a_{k+n}+\frac{e_{k+n}}{\omega_{k+n}}$
and $\omega_{k+n} \in \RRR_{\e} (\Delta)$ it follows that
$\omega_{k+n-1}^*=a_{k+n}+\frac{e_{k+n}}{\omega_{k+n}^*} \in (-1,1)$.
But $\omega_k=\omega_{k+n}$ entails $\omega_k^*=\omega_{k+n}^*=:\beta \in (-1,1)$, and thus
$-1 < a_k +\frac{e_k}{\beta} <1$ and $-1 < a_{k+n}+\frac{e_{k+n}}{\beta} < 1$, or equivalently
\begin{equation*}
e_k a_k, e_{k+n} a_{k+n}  \in \bigg( -1-\frac{1}{\beta}, 1 -\frac{1}{\beta}\bigg) .
\end{equation*}
Since $a_k$ and $a_{k+n}$ are even, this gives $(a_k,e_k)=(a_{k+n},e_{k+n})$.
Letting $k$ decrease by one (unless $k=0$), one finds $\omega =\Te^n (\omega)$, showing
that $ECF(\omega)$ is periodic.

(ii) $\Longleftrightarrow$ (iii) follows from the first equality in \eqref{eq2.3}.
\end{proof}

The ECF version of Lagrange's theorem is well known.
It also holds for larger classes of continued fractions, including backward continued fractions.

\begin{proposition}[\cite{KL,Pa,DH}]\label{prop15}
For every $u \in [1,\infty) \setminus \Q$ the following are equivalent:
\begin{itemize}
\item[(i)]
$u$ is a QI.
\item[(ii)]
$ECF(u)$ is eventually periodic, i.e.
\begin{equation}\label{eq2.12}
u =[ (a_1,e_1),\ldots ,(a_r,e_r),\, \overline{(a_{r+1},e_{r+1}),\ldots ,(a_{r+n},e_{r+n})}\,].
\end{equation}
\end{itemize}
\end{proposition}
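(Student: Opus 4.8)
The plan is to prove the two implications separately, the implication (ii) $\Rightarrow$ (i) being the routine one. If $ECF(u)$ is eventually periodic as in \eqref{eq2.12}, set $\omega:=\Te^r(u)=[\,\overline{(a_{r+1},e_{r+1}),\ldots,(a_{r+n},e_{r+n})}\,]$, which is purely periodic, so $\Te^n(\omega)=\omega$. Writing this as $\Omega_E(\omega)\,\omega=\omega$ with $\Omega_E(\omega)=\big(\begin{smallmatrix} p_n & p_{n-1}e_n \\ q_n & q_{n-1}e_n\end{smallmatrix}\big)$ (here $p_i,q_i$ are the convergents of $\omega$ and $n=\operatorname{per}(\omega)$), I note that $\omega$ is a fixed point of an integral Möbius transformation with lower-left entry $q_n\geq 1$; since $\omega$ is irrational (its ECF-expansion is infinite), it satisfies the genuine quadratic $q_nX^2+(q_{n-1}e_n-p_n)X-p_{n-1}e_n=0$ with integer coefficients, hence is a QI. Finally $u=\Omega_r(u)\,\omega$ by \eqref{eq2.5}, so $u$ lies in the same quadratic field and is a QI by Remark \ref{rem7}(A).

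The substance is (i) $\Rightarrow$ (ii), the ECF-Lagrange theorem. Let $u$ be a QI and put $u_k:=\Te^k(u)$, $\Delta:=\operatorname{disc}(u)$. Each $u_k=\Omega_k(u)^{-1}u$ is an irrational QI with $\operatorname{disc}(u_k)=\Delta$ by Remark \ref{rem7}(A), and each $u_k>1$. The strategy is to show that $u_k$ is $E$-reduced for all large $k$, i.e. $u_k^*\in(-1,1)$; since $\RRR_{\e}(\Delta)$ is finite by Lemma \ref{lem12}, the $u_k$ then take only finitely many values for $k\geq k_0$, so $u_k=u_{k+n}$ for some $k\geq k_0$ and $n\geq 1$, and uniqueness of ECF-expansions yields the eventual periodicity \eqref{eq2.12}.

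To control $u_k^*$ I would first study the growth of $q_k$ through the gaps $g_k:=q_k-q_{k-1}$. From $q_k=a_kq_{k-1}+e_{k-1}q_{k-2}$ with $a_k\geq 2$ one computes $g_k-g_{k-1}=(a_k-2)q_{k-1}+(e_{k-1}+1)q_{k-2}\geq 0$, so $(g_k)$ is a nondecreasing sequence of positive integers, with $g_k=g_{k-1}$ only when $a_k=2$ and $e_{k-1}=-1$. If $(g_k)$ were bounded it would be eventually constant, forcing $a_k=2$, $e_{k-1}=-1$ for all large $k$; but then some complete quotient would equal $[\,\overline{(2,-1)}\,]$, whose value solves $w=2-\tfrac1w$, i.e. $w=1$, contradicting the irrationality of the $u_k$. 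Hence $g_k\to\infty$. Now the identity $q_{k-1}e_k+\Te^k(u)q_k=(-e_1)\cdots(-e_k)/(p_k-uq_k)$ recorded before \eqref{eq2.4} gives $D_k:=|uq_k-p_k|=|q_{k-1}e_k+u_kq_k|^{-1}$, and since $u_k>1$ we have $|q_{k-1}e_k+u_kq_k|\geq u_kq_k-q_{k-1}>q_k-q_{k-1}=g_k$, so $D_k<1/g_k\to 0$. Taking conjugates in $u=\Omega_k(u)u_k$ via Remark \ref{rem7}(B) gives $u^*=\Omega_k(u)u_k^*$, whence a direct computation yields $u_k^*=-e_k\,(u^*q_{k-1}-p_{k-1})/(u^*q_k-p_k)$, i.e. $|u_k^*|=D_{k-1}^*/D_k^*$ with $D_k^*:=|u^*q_k-p_k|$. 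Writing $D_k^*=|(u^*-u)q_k+(uq_k-p_k)|$ and setting $c:=|u^*-u|>0$, from $D_k<1$ and $q_k\to\infty$ I get $D_k^*\geq cq_k-1$ and $D_{k-1}^*\leq cq_{k-1}+1$, so $D_k^*-D_{k-1}^*>cg_k-2\to\infty$; thus $D_k^*$ is eventually strictly increasing and $|u_k^*|<1$ for all large $k$, as required.

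The main obstacle is precisely this last point—showing that the complete quotients become $E$-reduced. A naive attempt to bound the integer coefficients $A_k=Ap_k^2+Bp_kq_k+Cq_k^2$ directly (the usual RCF route) runs into the quantity $q_kD_k$, which one cannot bound a priori for ECF without already knowing the exponential growth of $q_k$ that periodicity would provide. The clean fix is to bypass $q_kD_k$ entirely and instead establish $g_k\to\infty$, which simultaneously forces $D_k\to 0$ and makes $D_k^*$ eventually increasing. The one genuinely ECF-specific subtlety is ruling out the degenerate tail $[\,\overline{(2,-1)}\,]$: this is exactly the mechanism by which slow, rational-approaching expansions are excluded for true quadratic irrationals, and handling it is what makes the nondecreasing-gap argument go through.
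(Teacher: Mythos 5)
Your proof is correct. There is, however, nothing in the paper to compare it against: Proposition \ref{prop15} is imported from \cite{KL,Pa,DH} without proof, so your argument is a self-contained supplement rather than a variant of an internal one. It is also fully compatible with the paper's toolkit. The direction (ii) $\Rightarrow$ (i) uses only \eqref{eq2.5}, the irrationality of $\Te^r(u)$, and Remark \ref{rem7}, exactly as it should. The direction (i) $\Rightarrow$ (ii) runs the classical Lagrange scheme (complete quotients eventually reduced, finiteness of $\RRR_{\e}(\Delta)$ from Lemma \ref{lem12}, uniqueness of ECF-expansions), and the genuinely new ingredient is your monotone-gap lemma, which is the right ECF substitute for the RCF growth bounds: $g_k=q_k-q_{k-1}$ is nondecreasing because $a_k\geq 2$ and $e_{k-1}+1\geq 0$, and boundedness would force an all-$(2,-1)$ tail whose value is $1$ (the same degeneracy the paper rules out inside the proof of Lemma \ref{lem31}), contradicting irrationality of the complete quotients; hence $g_k\to\infty$, which simultaneously gives $D_k=\lvert uq_k-p_k\rvert<1/g_k\to 0$ and, via the reverse triangle inequality $D_k^*-D_{k-1}^*>c\,g_k-2$ with $c=\lvert u-u^*\rvert>0$, that $D_k^*=\lvert u^*q_k-p_k\rvert$ is eventually strictly increasing, so $\lvert u_k^*\rvert=D_{k-1}^*/D_k^*<1$ for all large $k$. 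I checked the two identities this rests on: $D_k=\lvert q_{k-1}e_k+u_kq_k\rvert^{-1}$ is the unnumbered display derived from \eqref{eq2.3} just after \eqref{eq2.5} (not before \eqref{eq2.4}, as you wrote), and $u_k^*=-e_k(u^*q_{k-1}-p_{k-1})/(u^*q_k-p_k)$ follows from \eqref{eq2.3} applied to $u^*$ via Remark \ref{rem7}(B). Two cosmetic points only: in the bounded-gap case, conclude that the tail equals $1$ from its convergents $(k+1)/k\to 1$ rather than from the fixed-point equation $w=2-1/w$ alone, since that equation has the double root $1$ and quoting it presupposes the limit exists; and for $k=2$ the equality $g_2=g_1$ only forces $a_2=2$ (because $q_0=0$), not $e_1=-1$. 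Neither affects the argument, which stands as a clean, effective-tool-free proof of the ECF Lagrange theorem using only facts the paper already establishes.
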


We consider the Theta groups $\wTheta$ and $\Theta$ defined in the introduction.

\begin{lemma}\label{lem16}
For every $u \in \X(\Delta)$, $\wTheta u \cap \RRR_{\e} (\Delta) \neq \emptyset$.
\end{lemma}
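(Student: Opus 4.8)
The plan is to exhibit $\sigma\in\wTheta$ as an even translation followed by finitely many steps of the ECF Gauss shift. The three ingredients are: (a) $\wTheta$ is a group and every ECF-step matrix lies in it; (b) an arbitrary QI can be moved into $(1,\infty)$ by an even translation, which itself lies in $\wTheta$; and (c) once inside $(1,\infty)$, Propositions \ref{prop15} and \ref{prop14} guarantee that some $\Te$-iterate is purely periodic, hence $E$-reduced. Since all these transformations preserve the discriminant, the resulting $E$-reduced number automatically lands in $\RRR_{\e}(\Delta)$.

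First I would record the group-theoretic facts. Reduction mod $2$ is a homomorphism $\operatorname{GL}(2,\Z)\to\operatorname{GL}(2,\Z/2\Z)$, and since $J_2^2\equiv I_2$, the pair $\{I_2,J_2\}$ is a subgroup of $\operatorname{GL}(2,\Z/2\Z)$; thus $\wTheta$, being its preimage, is a subgroup of $\operatorname{GL}(2,\Z)$. Because each digit $a_i$ is even while each $e_i=\pm1$ is odd, the matrix $M(a_i,e_i)=\big(\begin{smallmatrix} a_i & e_i \\ 1 & 0\end{smallmatrix}\big)\equiv J_2\pmod 2$, so $M(a_i,e_i)\in\wTheta$; consequently every product $\Omega_k(u)=M(a_1,e_1)\cdots M(a_k,e_k)$ and its inverse lie in $\wTheta$ as well. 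Likewise every even translation $\big(\begin{smallmatrix} 1 & 2k \\ 0 & 1\end{smallmatrix}\big)\equiv I_2\pmod 2$ belongs to $\wTheta$. Finally, by Remark \ref{rem7}(A), all these transformations preserve $\operatorname{disc}$, hence they map $\X(\Delta)$ into $\X(\Delta)$.

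Now fix $u\in\X(\Delta)$. Since $u$ is irrational, the inequalities $1<u+2k<3$ determine a unique $k\in\Z$, and $\tau:=\big(\begin{smallmatrix} 1 & 2k \\ 0 & 1\end{smallmatrix}\big)\in\wTheta$ satisfies $u':=\tau u\in(1,3)$ with $\operatorname{disc}(u')=\Delta$. As $u'>1$ is a QI, Proposition \ref{prop15} shows its ECF-expansion is eventually periodic, say purely periodic after $r$ steps; then $\Te^r(u')$ is a periodic point of $\Te$, and by Proposition \ref{prop14} it lies in $\RRR_{\e}(\Delta)$. By \eqref{eq2.3} we have $\Te^r(u')=\Omega_r(u')^{-1}u'$, so setting $\sigma:=\Omega_r(u')^{-1}\tau\in\wTheta$ gives $\sigma u=\Te^r(u')\in\RRR_{\e}(\Delta)$, proving $\wTheta u\cap\RRR_{\e}(\Delta)\neq\emptyset$.

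The argument is essentially bookkeeping, and the one point deserving care---the step I would treat as the crux---is verifying that the preprocessing translation simultaneously places $u$ in the ECF domain $[1,\infty)$ and remains inside $\wTheta$, together with the congruence $M(a,e)\equiv J_2\pmod 2$ that keeps all ECF-step matrices and their products within $\wTheta$. Once the group structure of $\wTheta$ is in place, no genuine difficulty remains, and in particular a single even translation (rather than a long reduction procedure) suffices to enter $(1,\infty)$.
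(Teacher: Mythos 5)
Your proof is correct and takes essentially the same route as the paper's: invoke the ECF Lagrange theorem (Proposition \ref{prop15}) to write the number with an eventually periodic expansion, then apply the inverse digit matrices $M(a_i,e_i)^{-1}$, which lie in $\wTheta$, to land on the purely periodic tail, which is $E$-reduced by Lemma \ref{lem10}/Proposition \ref{prop14}. Your extra even-translation step is a sensible refinement, since $\X(\Delta)$ contains QIs outside $[1,\infty)$ while Proposition \ref{prop15} is stated only for $u\in[1,\infty)\setminus\Q$ — a point the paper's own proof passes over silently.
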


\begin{proof}
The previous proposition allows us to take $u$ as in \eqref{eq2.12}. Setting
\begin{equation*}
\sigma :=\left( \begin{matrix} 0 & 1 \\ e_r & -e_r a_r \end{matrix}\right) \cdots
\left( \begin{matrix} 0 & 1 \\ e_1 & -e_1 a_1 \end{matrix}\right) \in \wTheta ,
\end{equation*}
we have
\begin{equation*}
\sigma u =[\,\overline{(a_{r+1},e_{r+1}),\ldots ,(a_{r+n},e_{r+m})}\,] \in \wTheta u \cap \RRR_{\e} (\Delta) .\qedhere
\end{equation*}
\end{proof}

\subsection{Some bijections between subsets of ${\mathscr R}_E$ and subsets of $\operatorname{SL}(2,\Z)$}\label{sub2.3}

Consider the collections of matrices
\begin{equation*}
\begin{split}
\PP & :=\big\{ M(a_1,e_1)\cdots M(a_n,e_n) \mid n\geq 1, \ a_i \in 2\N ,\  e_i \in \{ \pm 1\} \big\} \quad \mbox{\rm and} \\
\SSS & := \left\{ \sigma = \left( \begin{matrix} p^ \prime & pe \\ q^\prime & qe \end{matrix} \right) \in \operatorname{GL}(2,\Z) \ \bigg| \
\begin{matrix} \sigma \equiv I_2 \ \mbox{\rm or}\ J_2 \pmod{2}, \
e\in \{ \pm 1\} \\  p^\prime >p>q>0,\ p^\prime > q^\prime >q \end{matrix} \right\} .
\end{split}
\end{equation*}

\begin{proposition}\label{prop17}
$\PP=\SSS .$
\end{proposition}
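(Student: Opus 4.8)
The plan is to prove the two inclusions $\PP\subseteq\SSS$ and $\SSS\subseteq\PP$ separately, using the explicit shape $\Omega_n(u)=\left(\begin{smallmatrix} p_n & p_{n-1}e_n \\ q_n & q_{n-1}e_n\end{smallmatrix}\right)$ recorded above to match a product $M(a_1,e_1)\cdots M(a_n,e_n)$ with a matrix $\left(\begin{smallmatrix} p' & pe \\ q' & qe\end{smallmatrix}\right)$ through $p'=p_n$, $q'=q_n$, $e=e_n$, $p=p_{n-1}$, $q=q_{n-1}$. The forward inclusion is a direct verification; the reverse inclusion is a reduction (division) algorithm, and this is where the work lies.

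For $\PP\subseteq\SSS$ I would argue by induction on $n$. The determinant condition $\det\Omega_n=(-e_1)\cdots(-e_n)\in\{\pm1\}$ is exactly \eqref{eq2.1}, so $\Omega_n\in\operatorname{GL}(2,\Z)$. The inequalities $p_n>p_{n-1}>q_{n-1}>0$ and $p_n>q_n>q_{n-1}$ follow from the recurrences $p_k=a_kp_{k-1}+e_{k-1}p_{k-2}$ and $q_k=a_kq_{k-1}+e_{k-1}q_{k-2}$ together with $a_k\ge2$ and $|e_{k-1}|=1$: the bound $p_k\ge 2p_{k-1}-p_{k-2}$ shows the gaps $p_k-p_{k-1}$, $q_k-q_{k-1}$ and $p_k-q_k$ are nondecreasing and positive from the base values $p_0=1,q_0=0,p_1=a_1\ge2,q_1=1$, yielding strict monotonicity and $p_k>q_k$ for all $k\ge1$. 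Finally the congruence condition comes from the parity rule recorded above, $q_k$ even $\Leftrightarrow k$ even $\Leftrightarrow p_{k-1}$ even: since $e_n$ is odd, reducing $\Omega_n$ mod $2$ gives $\left(\begin{smallmatrix} p_n & p_{n-1}\\ q_n & q_{n-1}\end{smallmatrix}\right)$, which is $I_2$ when $n$ is even and $J_2$ when $n$ is odd.

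For $\SSS\subseteq\PP$ I would peel off the rightmost factor and induct on the top-left entry $p'$. Given $\sigma=\left(\begin{smallmatrix} p' & pe \\ q' & qe\end{smallmatrix}\right)\in\SSS$, the common sign of the second column determines $e_n:=e$, and I take $a_n$ to be the (necessarily even, $\ge2$) digit $2\lfloor\frac{p'+p}{2p}\rfloor$ produced by the ECF algorithm $a_1(u)=2\lfloor\frac{u+1}{2}\rfloor$ applied to $u=\tfrac{p'}{p}>1$. Since $M(a_n,e_n)^{-1}=\left(\begin{smallmatrix} 0 & 1 \\ e_n & -a_ne_n\end{smallmatrix}\right)$, one computes $\sigma M(a_n,e_n)^{-1}=\left(\begin{smallmatrix} p & p'-a_np \\ q & q'-a_nq\end{smallmatrix}\right)$, whose first column $\binom{p}{q}$ is the previous reduced pair. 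I would show this matrix again lies in $\SSS$, with strictly smaller top-left entry, unless it already has the shape $M(a_1,e_1)=\left(\begin{smallmatrix} a_1 & e_1\\ 1 & 0\end{smallmatrix}\right)$, where the recursion stops and $\sigma=M(a_1,e_1)\cdots M(a_n,e_n)\in\PP$. As each step strictly decreases $p'$, finitely many steps suffice.

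The heart of the matter, and the main obstacle, is the bookkeeping in this backward step: showing that the reduced matrix stays inside $\SSS$ (all inequalities strict, common second-column sign) or is exactly the base block. Two facts carry it. First, unimodularity $\det\sigma=e(p'q-pq')=\pm1$, i.e. $p'q-pq'=\pm1$, transfers the range bound $|p'-a_np|<p$ to $|q'-a_nq|<q$ and forces $\operatorname{sgn}(q'-a_nq)=\operatorname{sgn}(p'-a_np)$: were the two remainders of opposite sign and both nonzero, then $|p(q'-a_nq)-q(p'-a_np)|=p|q'-a_nq|+q|p'-a_np|\ge p+q>1$, contradicting that this quantity equals $|pq'-qp'|=1$. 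Second, the congruence $\sigma\equiv I_2$ or $J_2\pmod 2$ is preserved (it merely toggles, since $M(a_n,e_n)^{-1}\equiv J_2$), and it is precisely what excludes the degenerate boundary configurations: it forces $q'-a_nq$ to be even, so that at the terminal step (where $q=1$, which rules out $J_2$ and hence makes $p$ and $q'$ even) this remainder must vanish rather than equal $\pm1$, giving clean termination at $M(a_1,e_1)$ with $a_1=p$ even, while at intermediate steps it rules out matrices whose mod-$2$ reduction is neither $I_2$ nor $J_2$. Assembling these observations closes both inclusions.
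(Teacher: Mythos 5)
Your strategy is essentially the paper's: the forward inclusion is the same induction on the recurrences (nondecreasing gaps plus the parity rule), and the reverse inclusion is the same right-to-left division algorithm. The one genuine variation is that you extract the digit from the top row ($a_n=2\big\lfloor\frac{p'+p}{2p}\big\rfloor$, the ECF digit of $p'/p$), while the paper extracts it from the bottom row ($a=2\big\lfloor\frac{q'}{2q}+\frac12\big\rfloor$); with either choice the quotient has the form $\big(\begin{smallmatrix} p & p'-ap \\ q & q'-aq\end{smallmatrix}\big)$, and your choice has the small advantage that the division itself never degenerates, since $p>q\geq 1$ forces $p\geq 2$, whereas $q=1$ can occur and is exactly the paper's delicate terminal configuration.

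There is, however, a concrete gap in your inductive step. Write $P:=|p'-a_np|$ and $Q:=|q'-a_nq|$. Membership of the quotient $\sigma_0$ in $\SSS$ requires the full chain $p>P>Q>0$ and $p>q>Q$; your ``two facts'' as elaborated give $P<p$, $Q<q$, the sign agreement, and the parity toggle, but you never establish $P>Q$, which is the ``$p>q$'' condition for $\sigma_0$ and is indispensable for the recursion to continue. It does not follow from the range bounds and sign agreement alone: the matrix $\big(\begin{smallmatrix} p & f \\ q & f\end{smallmatrix}\big)$ with $p=q+1$, $q\geq 2$, $f\in\{\pm1\}$ has determinant $f(p-q)=\pm 1$, agreeing signs, and $P=Q=1$ within range, yet violates $P>Q$. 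Excluding it requires an argument you never make, combining the two tools you cite: from $|pQ-qP|=1$ (the same identity you use for sign agreement), $Q\geq P$ forces $pQ-qP\geq (p-q)P\geq 1$ with equality throughout, i.e.\ $P=Q=1$ and $p=q+1$; and then the congruence $\sigma_0\equiv I_2$ or $J_2\pmod 2$ forces the two entries of the second column of $\sigma_0$ to have opposite parities, so $P\neq Q$ --- a contradiction. This is precisely the point where the paper invests its effort (the computation $p_0=\frac{pq_0\pm 1}{q}>q_0-1$, coprimality giving $p_0>q_0$ or $q_0=1$, and the parity analysis of the case $q_0=1$). With that step supplied your plan closes; as written, the induction hypothesis is not restored.
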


\begin{proof}
($\subseteq$) The matrix $M(a_1,e_1)\cdots M(a_n,e_n)=\Big( \begin{smallmatrix} p_n & p_{n-1}e_n \\ q_n & q_{n-1} e_n \end{smallmatrix} \Big)$
is $\equiv I_2$ or $J_2 \pmod{2}$ as a product of matrices $M(a_i,e_i) \equiv J_2 \pmod{2}$. We also have $q_1=1>q_0=0$, and by induction
$q_n =a_n q_{n-1}+e_{n-1}q_{n-2} \geq 2q_{n-1}-q_{n-2} > q_{n-1}$. Similarly, $p_n >p_{n-1}$ as $p_1=a_1>p_0=1$ and
$p_n-q_n \geq p_{n-1}-q_{n-1} \geq p_1-q_1=a_1-1 \geq p_0-q_0=1$, showing $M(a_1,e_1)\cdots M(a_n,e_n)\in\SSS$.

($\supseteq$) Let $\sigma :=\Big( \begin{smallmatrix} p^\prime & pe \\ q^\prime & qe \end{smallmatrix}\Big)\in\SSS$. Consider
$a:=2\big\lfloor \frac{q^\prime}{2q} +\frac{1}{2}\big\rfloor \geq 2$. The inverse of $M(a,e)$ is $M(a,e)^{-1}=(-e) \Big(
\begin{smallmatrix} 0 & -e \\ -1 & a \end{smallmatrix}\Big)$ and
\begin{equation*}
\sigma_0:=\sigma M(a,e)^{-1} =(-e) \left( \begin{matrix} p^\prime & pe \\ q^\prime & qe \end{matrix}\right)
\left( \begin{matrix} 0 & -e \\ -1 & a \end{matrix}\right) =
\left( \begin{matrix} p & p^\prime -ap \\ q & q^\prime -aq \end{matrix}\right) \in \operatorname{GL}(2,\Z),
\end{equation*}
with $\sigma_0 \equiv I_2$ or $J_2 \pmod{2}$.
Upon $\frac{q^\prime}{2q} -\frac{1}{2} < \frac{a}{2} \leq \frac{q^\prime}{2q} +\frac{1}{2}$ we have $-q\leq q^\prime -aq<q$.
Since $(q^\prime,q)=1$, if $q^\prime -aq =-q$ then $q=1$, which in turn implies $q^\prime \equiv q \mod 2$, contradiction. Hence $q^\prime -aq =fq_0$ with
$f=\operatorname{sgn} (q^\prime -aq)\in \{ \pm 1\}$ and $0<q_0<q$.

Let $p_0:=f(p^\prime -ap)\in \Z$. We have $\sigma_0 =\Big( \begin{smallmatrix} p & p_0 f \\ q & q_0 f \end{smallmatrix}\Big)$.
It remains to check the inequalities $p>p_0>0$ and $p_0>q_0$, and to investigate what happens when $q_0=1$.

Upon $\det (\sigma_0)=f(qp_0-pq_0)\in \{ \pm 1\}$ we have $p_0=\frac{pq_0 \pm 1}{q} =\frac{p}{q} q_0 \pm \frac{1}{q} > q_0 -\frac{1}{q}\geq q_0-1$,
showing $p_0\geq q_0 \geq 1$. Since $(p_0,q_0)=1$, then either $p_0>q_0$ or $q_0=1$.
Since $q-q_0\geq 1$ and $p\geq 2$ as $p>q>q_0$, we have $p(q-q_0)>1$, showing $p> \frac{pq_0+1}{q} \geq p_0$.

Lastly, when $q_0=1$ it follows from $\Big( \begin{smallmatrix} p & p_0 \\ q & q_0 \end{smallmatrix}\Big) \equiv
I_2$ or
$J_2 \pmod{2}$ that $p_0,q\in 2\N$ and
\begin{equation*}
\sigma_0 =\left( \begin{matrix} p & p_0 e \\ q & e \end{matrix}\right) =
\left( \begin{matrix} p_0 & f \\ 1 & 0 \end{matrix}\right)
\left( \begin{matrix} q & e \\ 1 & 0 \end{matrix}\right) =M(p_0,f) M(q,e),
\end{equation*}
where $f\in \{ \pm 1\}$ is given by $f=p-p_0 q=pq_0-p_0 q$.
\end{proof}

\begin{remark}\label{rem18}
The defining relations for $q_n$ and $p_n$ lead to $\frac{q_2}{q_1}=a_2$, $\frac{q_3}{q_2}=a_3+\frac{e_2}{a_2}$,
$\frac{p_1}{p_0}=a_1$, $\frac{p_2}{p_1}=a_2+\frac{e_1}{a_1}$,
$\frac{p_3}{p_2}=a_3+\frac{e_2}{a_2+\frac{e_1}{a_1}}$ and
\begin{equation*}
\begin{split}
\frac{q_n}{q_{n-1}} & = a_n +\frac{e_{n-1}}{\frac{q_{n-1}}{q_{n-2}}} =
a_n +\cfrac{e_{n-1}}{a_{n-1}+ \cfrac{e_{n-2}}{\ddots +\cfrac{e_2}{a_2}}}\, ,\qquad \forall n\geq 2 ,\\
\frac{p_n}{p_{n-1}} & = a_n +\frac{e_{n-1}}{\frac{p_{n-1}}{p_{n-2}}} =
a_n +\cfrac{e_{n-1}}{a_{n-1}+ \cfrac{e_{n-2}}{\ddots +\cfrac{e_1}{a_1}}}\, ,\qquad \forall n\geq 1.
\end{split}
\end{equation*}
The latter and Lemma \ref{lem10} yield
\begin{equation*}
-\frac{1}{[\overline{(a_1,e_1),\ldots ,(a_n,e_n)}]^*} =\frac{e_n p_{n}}{p_{n-1}},\qquad \forall n\geq 1.
\end{equation*}
\end{remark}

Given $N\in \N$ and $\alpha,\beta,\beta_1,\beta_2\geq 1$, we introduce the sets
\begin{equation*}
\begin{split}
W_{\e} & := \big\{ (a,e)\mid a\in 2\N ,e\in \{ \pm 1\}\big\} ,\\
\WW_{\e} & := \{ (w_1,\ldots,w_m) \mid  m\geq 1, w_i \in W_{\e}\}  ,\\
\WW_{\e}^+ & := \{ (w_1,\ldots,w_m) \in \WW_{\e} \mid  (-e_1)\cdots (-e_m)=+1 \},\\
\SSS_+ & : =\{ \sigma\in \SSS \mid \det(\sigma)=+1 \} ,
 \\
\SSS (\alpha,\beta;N) & :=\{ \sigma \in \SSS_+ \mid \operatorname{Tr} (\sigma)\leq N,\ p\geq \alpha q,\ p^\prime \geq \beta p \},\quad
\SSS (N) :=\SSS (1,1;N)  ,\\
\TT (\alpha,\beta_1,\beta_2;N) & :=\{ (\omega,k) \mid \omega \in \RRR_{\e}, \
  \omega \geq \alpha, \ \omega^* \in [ -\tfrac{1}{\beta_2},\tfrac{1}{\beta_1}],\
  \operatorname{Tr}(\widetilde{\Omega}(\omega)^k) \leq N,\  k\in \N  \} ,\\
\TT_k(\alpha,\beta_1,\beta_2;N) & := \{ (\omega,k) \mid \omega\in \RRR_{\e},
\   \omega \geq \alpha, \ \omega^* \in [-\tfrac{1}{\beta_2},\tfrac{1}{\beta_1}],\
  \operatorname{Tr}(\widetilde{\Omega}(\omega)^k) \leq N\},\quad k\in \N ,\\
  \TT (N) & :=\TT (1,1,1;N),\qquad \TT_{k}(N):=\TT_k (1,1,1;N) .
\end{split}
\end{equation*}
For given $N,\alpha,\beta_1,\beta_2$, the sets $\TT_k(\alpha,\beta_1,\beta_2;N)$ are disjoint. Therefore
\begin{equation*}
\lvert \TT (\alpha,\beta_1,\beta_2;N)\rvert =\sum\limits_{k\geq 1} \  \lvert \TT_k(\alpha,\beta_1,\beta_2;N)\rvert,\qquad \forall N\geq 1 .
\end{equation*}

We have already proved that the map
\begin{equation*}
\beta_{\e} : \WW_{\e} \rightarrow \SSS, \qquad
\beta_{\e} \big( (a_1,e_1),\ldots ,(a_m,e_m)\big) :=M(a_1,e_1) \cdots M(a_m,e_m)
\end{equation*}
is well defined and onto. Define also the sets
\begin{equation*}
\begin{split}
\WW_{\e}^+(N) & :=\{ w\in \WW_{\e}^+ \mid \operatorname{Tr}(\beta_E(w))\leq N\} ,  \\
\WW_E^+ (\alpha,\beta_1,\beta_2;N) & := \{ w=(w_1,\ldots,w_m)\in \WW_E^+ (N) \mid\omega :=[\, \overline{w_1,\ldots,w_m}\, ] \geq \alpha,\
\omega^* \in [ -\tfrac{1}{\beta_2},\tfrac{1}{\beta_1} ]\} .
\end{split}
\end{equation*}

\begin{proposition}\label{prop19}
{\em (i)} The map $\beta_{\e}$ is one-to-one and
$\beta_{\e} ( \WW_{\e}^+(N))=\SSS (N)$.

{\em (ii)} The map
\begin{equation*}
j_{\e}: \WW_{\e}^+(N)\rightarrow \TT(N),\quad  j_{\e} \big( (a_1,e_1),\ldots ,(a_m,e_m)\big) := \bigg(
\omega=[\,\overline{(a_1,e_1),\ldots,(a_m,e_m)}\,],\frac{m}{\ep} \bigg)
\end{equation*}
is a one-to-one correspondence.
\end{proposition}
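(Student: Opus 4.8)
The plan is to build both parts on Proposition~\ref{prop17}, which already realizes $\beta_E$ as a surjection onto $\SSS$, and on the observation that the reduction step in its proof is forced, so it can be run backwards.

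For the injectivity in part (i), I would show that a nonempty word is reconstructed from its image matrix by a deterministic recursion. Writing $\sigma=\beta_E(w)=\big(\begin{smallmatrix} p_m & p_{m-1}e_m \\ q_m & q_{m-1}e_m\end{smallmatrix}\big)$ for a word of length $m\ge 2$, the positivity $p_{m-1},q_{m-1}>0$ forces $e_m=\operatorname{sgn}(\sigma_{22})$, while $q_m=a_mq_{m-1}+e_{m-1}q_{m-2}$ with $0\le q_{m-2}<q_{m-1}$ and $a_m$ even gives $a_m=2\big\lfloor \tfrac{q_m}{2q_{m-1}}+\tfrac12\big\rfloor$; hence the last digit $(a_m,e_m)$ is read off from $\sigma$, and passing to $\sigma M(a_m,e_m)^{-1}$ strictly shortens the word. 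An induction---with the length-one base case, detected by $\sigma_{22}=0$, where $\sigma=M(a,e)$ is read off directly---then recovers $w$ and yields injectivity. For the image identity, I would combine surjectivity of $\beta_E$ onto $\SSS$ with the determinant formula \eqref{eq2.1}: since $\det\Omega_m=(-e_1)\cdots(-e_m)$, a word lies in $\WW_E^+$ precisely when its image lies in $\SSS_+$, so $\beta_E(\WW_E^+)=\SSS_+$; intersecting with the trace bound and noting that for $\alpha=\beta=1$ the inequalities $p\ge q$, $p'\ge p$ hold automatically inside $\SSS$, I obtain $\beta_E(\WW_E^+(N))=\SSS(N)$.

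The core of part (ii) is the identity $\beta_E(w)=\widetilde{\Omega}_E(\omega)^k$. Given $w\in\WW_E^+$ of length $m$, let $n=\operatorname{per}(\omega)$ be the fundamental period of $\omega=[\,\overline{w}\,]$, so $n\mid m$, say $m=jn$, and $\beta_E(w)=\Omega_E(\omega)^j$ because $\beta_E$ sends concatenation to matrix multiplication. With $\delta_n=(-e_1)\cdots(-e_n)$, membership $w\in\WW_E^+$ amounts to $\delta_n^{\,j}=+1$, and I would split into two cases: if $\delta_n=+1$ then $\operatorname{eper}(\omega)=n$, $j$ is unrestricted, $k=m/n=j$ and $\widetilde{\Omega}_E(\omega)^k=\Omega_E(\omega)^j$; if $\delta_n=-1$ then $\operatorname{eper}(\omega)=2n$, $j$ must be even, $k=m/(2n)=j/2$ and $\widetilde{\Omega}_E(\omega)^k=(\Omega_E(\omega)^2)^{j/2}=\Omega_E(\omega)^j$. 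In both cases $k\in\N$ and $\beta_E(w)=\widetilde{\Omega}_E(\omega)^k$, so $\operatorname{Tr}(\widetilde{\Omega}_E(\omega)^k)=\operatorname{Tr}(\beta_E(w))\le N$; together with $\omega\in\RRR_E$, $\omega\ge1$, $\omega^*\in[-1,1]$ (Proposition~\ref{prop14} and Lemma~\ref{lem10}) this shows $j_E$ maps $\WW_E^+(N)$ into $\TT(N)$ and is well defined.

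Finally I would prove $j_E$ bijective by hand. Injectivity is immediate: $\omega$ fixes all ECF-digits and the value $\operatorname{eper}(\omega)$, so $(\omega,k)$ determines the length $m=k\operatorname{eper}(\omega)$ and hence $w$. For surjectivity, given $(\omega,k)\in\TT(N)$ I take $w$ to be the $k$-fold repetition of the even-period block of length $\ell=\operatorname{eper}(\omega)$; since $\delta_\ell=+1$ one has $w\in\WW_E^+$, the identity above gives $\operatorname{Tr}(\beta_E(w))=\operatorname{Tr}(\widetilde{\Omega}_E(\omega)^k)\le N$, and $j_E(w)=(\omega,k)$. I expect the main obstacle to be the bookkeeping in part (ii): keeping the fundamental period $n$, the even-period $\ell$, the word length $m$ and the exponent $k$ mutually consistent across the parity cases $\delta_n=\pm1$, and checking that the sign condition cutting out $\WW_E^+$ corresponds exactly to $\ell\mid m$ with $k=m/\ell$.
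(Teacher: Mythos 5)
Your proposal is correct and follows essentially the same route as the paper: injectivity of $\beta_E$ by peeling off the last digit $(a_m,e_m)$ from the bottom row of the matrix (your floor formula $a_m=2\lfloor \frac{q_m}{2q_{m-1}}+\frac12\rfloor$ is exactly the mechanism behind the paper's use of Remark \ref{rem18} and mirrors the choice of $a$ in the proof of Proposition \ref{prop17}), and part (ii) via the identity $\beta_E(w)=\widetilde{\Omega}_E(\omega)^{m/\ell}$ with the same case split on $\delta_n=\pm1$, the same injectivity argument, and the same block-repetition construction for surjectivity. No gaps; this matches the paper's proof.
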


\begin{proof}
To check that $\beta_{\e}$ is one-to-one we employ a descending argument. Assuming
\begin{equation*}
M_{(a_1,e_1)} \cdots M_{(a_n,e_n)} =\left( \begin{matrix} p_n & p_{n-1} e_n \\ q_n & q_{n-1} e_n \end{matrix} \right) =
M_{(b_1,f_1)} \cdots M_{(b_k,f_k)} =\left( \begin{matrix} P_k & P_{k-1} f_k \\ Q_k & Q_{k-1} f_k \end{matrix}\right),
\end{equation*}
the previous remark implies $f_k=e_n$ and $\frac{Q_k}{Q_{k-1}}=\frac{q_n}{q_{n-1}}$.
This gives in turn $b_k=a_n$, leading to
$M_{(a_1,e_1)} \cdots M_{(a_{n-1},e_{n-1})} =M_{(b_1,f_1)} \cdots M_{(b_{k-1},f_{k-1}})$.

The equality $\beta_{\e} ( \WW_{\e}^+ (N))=\SSS (N)$ is clear.

(ii) First we check that $j_{\e} (\WW_{\e}^+(N)) \subseteq \TT (N)$.
Let $w:=((a_1,e_1),\ldots,(a_m,e_m))\in \WW_{\e}^+(N)$ and $\omega :=[\overline{(a_1,e_1),\ldots,(a_m,e_m)}] \in \RRR_{\e}$.
Employing the notation introduced after Lemma \ref{lem8}, let
$n:=\operatorname{per} (\omega)$. Set $k:=\frac{m}{n} \in \N$. If $\delta_n=+1$, then $\ell:=\ep =n$ and $\frac{m}{\ell}=k\in\N$.
If $\delta_n =-1$, then $\ell=2n$ and we have $1=(-e_1)\cdots (-e_m)=\delta_n^k=(-1)^k$, so $k$ is even and $\frac{m}{\ell}=\frac{m}{2n}=\frac{k}{2}\in \N$.
On the other hand $\beta_{\e} (w)=\widetilde{\Omega} (\omega)^{m/\ell}$, showing $\operatorname{Tr} (\widetilde{\Omega}(\omega)^{m/\ell} )\leq N$,
and therefore $j_{\e} (w)\in \TT (N)$.

The map $j_{\e}$ is clearly one-to-one. If $w=((a_1,e_1),\ldots,(a_m,e_m)), w^\prime =((b_1,f_1),\ldots,(b_s,f_s))\in \WW_{\e}^+ (N)$
and $j_{\e} (w)=( \omega, \frac{m}{\ep})=j_{\e} (w^\prime) =(\omega^\prime,\frac{s}{\operatorname{eper}(\omega^\prime)})$, then
$\omega=\omega^\prime$ and $m=s$, so clearly $a_i=b_i$, $i=1,\ldots,m$.

To check surjectivity of $j_{\e}$, let $(\omega,k)\in \TT (N)$, $\omega=[\overline{w_1,\ldots,w_n}]$ with
$w_i=(a_i,e_i)$ and $n=\operatorname{per} (\omega)$, $\ell=\ep$. Take $m:=k\ell$ and
$w=(w_1,\ldots,w_\ell,\ldots ,w_1,\ldots,w_\ell)\in \WW_{\e}^+$ with the block $(w_1,\ldots,w_\ell)$ repeating $k$ times.
Since $\beta_{\e} (w)=\widetilde{\Omega}(\omega)^k$, it follows that $\operatorname{Tr}(\beta_{\e} (w))\leq N$. Therefore, $w \in \WW_{\e}^+(N)$  and $j_{\e}(w)=(\omega,\frac{m}{\ell})$.
\end{proof}

\begin{cor}\label{cor20}
 The restriction of $j_E$ gives a bijection between
the sets $\WW_E^+ (\alpha,\beta_1,\beta_2;N)$ and $\TT (\alpha,\beta_1,\beta_2;N)$.
\end{cor}

\section{Even continued fractions and the Pell equation}\label{Pell}
In this section we discuss some connections between $E$-reduced QIs and the Pell equations $t^2-\Delta u^2 =\pm 1$,
extending some well known results about reduced QIs (\cite{Ha,On}). This part of the paper is not directly related to the proofs of Theorems 1 or 4.

Throughout this section we consider $\omega \in \RRR_{\e} (\Delta)$, $\omega =
[\,\overline{(a_1,e_1),\ldots,(a_n,e_n)}\,] >1$ with $n=\operatorname{per} (\omega)$, and minimal
polynomial $AX^2+BX+C$, where $A>0$, $(A,B,C)=1$, and $\Delta =B^2-4AC \equiv 0,1 \pmod{4}$.
As $\Big( \begin{smallmatrix} p_n & p_{n-1} e_n \\ q_n & q_{n-1} e_n \end{smallmatrix}\Big) \omega =\omega$,
one has
\begin{equation*}
\frac{q_n}{A}=\frac{q_{n-1} e_n -p_n}{B}=\frac{-p_{n-1} e_n}{C}=: v\in \Z .
\end{equation*}

When $n$ is odd, both $q_n$ and $p_{n-1}$ are odd, while $q_{n-1}e_n -p_n$ is always even. It follows that $A,C,v$ are odd and $B$ must be even,
so $\Delta \equiv 0 \pmod{4}$. Hence
$\Delta \equiv 1 \pmod{4} \Longrightarrow \operatorname{per}(\omega)$ even.

Consider the abelian groups
\begin{equation*}
\begin{split}
\FFF_\Delta & :=\{ t+u\sqrt{\Delta} \mid t,u\in\Z ,\  t^2 -\Delta u^2 =\pm 1\} \qquad \mbox{\rm and} \\
\FFF_\Delta^+ & :=\{ t+u\sqrt{\Delta} \mid t>0 ,\  t^2-\Delta u^2 =+1\} .
\end{split}
\end{equation*}
It is well known that $\FFF_\Delta^+$ is an infinite cyclic subgroup of $\FFF_\Delta$.

Recall from Lemma \ref{lem9} that the one-to-one group homomorphism $\Lambda_\omega^{\e} : \wTheta_\omega\rightarrow\R$ maps
a matrix $\Big(\begin{smallmatrix} a & b \\ c & d \end{smallmatrix}\Big)$ to its
eigenvalue $c\omega+d$.

\begin{lemma}\label{lem21}
{\em (i)} If $\Delta \equiv 1 \pmod{4}$, then $\Lambda_\omega^{\e} (\wTheta_\omega)=\FFF_\Delta$.
Furthermore, $\sigma \equiv I_2 \pmod{2}$
for every $\sigma \in \wTheta_\omega$.

{\em (ii)} If $\Delta=4\Delta_0$ and $\Delta_0$ is not a perfect square, then $\Lambda_\omega^{\e} (\wTheta_\omega) \subseteq \FFF_{\Delta_0}$.
If in addition $\Delta_0$ is odd and $B\equiv 0\pmod{4}$, then $\Lambda_\omega^{\e} (\wTheta_\omega)=\FFF_{\Delta_0}$.
\end{lemma}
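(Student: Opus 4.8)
The plan is to make the stabilizer $\wTheta_\omega$ completely explicit and then read off $\Lambda_\omega^{\e}$ on it. Writing the minimal polynomial of $\omega$ as $AX^2+BX+C$ and imposing $\sigma\omega=\omega$ for $\sigma=\Big(\begin{smallmatrix} a & b \\ c & d\end{smallmatrix}\Big)\in\operatorname{GL}(2,\Z)$, I would substitute $A\omega^2=-B\omega-C$ and compare the rational and irrational parts, exactly as in the proof of Lemma \ref{lem9}. Using $(A,B,C)=1$, this forces $c=Au$, $b=-Cu$ and $a-d=-Bu$ for a single $u\in\Z$, so that, with $t:=\operatorname{Tr}(\sigma)$,
\[
\sigma=\begin{pmatrix} \tfrac{t-Bu}{2} & -Cu \\ Au & \tfrac{t+Bu}{2}\end{pmatrix},\qquad \det(\sigma)=\frac{t^2-\Delta u^2}{4}.
\]
Hence the matrices fixing $\omega$ are parametrized by pairs $(t,u)\in\Z^2$ with $t\equiv Bu\pmod2$ and $t^2-\Delta u^2=4\det(\sigma)=\pm4$.

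Next I would evaluate the homomorphism: plugging $\omega=\frac{-B+\sqrt{\Delta}}{2A}$ and $c=Au$ into $c\omega+d$ gives at once
\[
\Lambda_\omega^{\e}(\sigma)=c\omega+d=\frac{t+u\sqrt{\Delta}}{2},
\]
which is forced in any case by Lemma \ref{lem9}, since $\Lambda_\omega^{\e}(\sigma)$ has trace $t$ and norm $\det(\sigma)$ in $\Q(\sqrt{\Delta})$. With this dictionary, both parts of the lemma reduce to translating $\sigma\equiv I_2$ or $J_2\pmod2$ into parity conditions on $(t,u)$ and matching them against $\FFF_\Delta$ (respectively $\FFF_{\Delta_0}$).

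For (i), observe that $\Delta\equiv1\pmod4$ is equivalent to $B$ odd. If some $\sigma\in\wTheta_\omega$ had $\sigma\equiv J_2\pmod2$, then $Au$ and $Cu$ would be odd, forcing $u$ odd, while the even diagonal entries force $Bu=\tfrac{t+Bu}{2}-\tfrac{t-Bu}{2}$ even, contradicting $B$ odd; this proves the second assertion, namely that every $\sigma\in\wTheta_\omega$ satisfies $\sigma\equiv I_2\pmod2$. Under $\sigma\equiv I_2\pmod2$ the odd diagonal entries give $t$ even and $Bu$ even, hence $u$ even, so $\Lambda_\omega^{\e}(\sigma)=\tfrac t2+\tfrac u2\sqrt{\Delta}\in\FFF_\Delta$. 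Conversely, given $t'+u'\sqrt{\Delta}\in\FFF_\Delta$ I would set $(t,u)=(2t',2u')$; since $\Delta$ is odd, $t'^2-\Delta u'^2=\pm1$ forces $t',u'$ of opposite parity, whence the diagonal entries $t'\pm Bu'\equiv t'+u'\equiv1\pmod2$ are odd and the off-diagonal ones even, placing the associated $\sigma$ in $\wTheta_\omega$ with $\Lambda_\omega^{\e}(\sigma)=t'+u'\sqrt{\Delta}$. This yields $\Lambda_\omega^{\e}(\wTheta_\omega)=\FFF_\Delta$.

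For (ii), $\Delta=4\Delta_0$ means $B$ is even, so $t\equiv Bu\equiv0\pmod2$; writing $t=2t_0$ and $\sqrt{\Delta}=2\sqrt{\Delta_0}$ turns $\Lambda_\omega^{\e}(\sigma)$ into $t_0+u\sqrt{\Delta_0}$ with $t_0^2-\Delta_0u^2=\det(\sigma)=\pm1$, giving the inclusion $\Lambda_\omega^{\e}(\wTheta_\omega)\subseteq\FFF_{\Delta_0}$ with no extra hypotheses. For equality under $\Delta_0$ odd and $B\equiv0\pmod4$, I would write $B=2B_0$ with $B_0$ even; then $\Delta_0=B_0^2-AC$ odd forces $A,C$ odd, and reducing $\sigma=\Big(\begin{smallmatrix} t_0-B_0u & -Cu \\ Au & t_0+B_0u\end{smallmatrix}\Big)$ modulo $2$ gives $\Big(\begin{smallmatrix} t_0 & u \\ u & t_0\end{smallmatrix}\Big)$, which equals $I_2$ if $t_0$ is odd and $J_2$ if $u$ is odd. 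Since $t_0^2-\Delta_0u^2=\pm1$ again forces $t_0,u$ of opposite parity, every element of $\FFF_{\Delta_0}$ lifts into $\wTheta_\omega$, giving equality. I expect the parity bookkeeping to be the only delicate point: in each surjectivity step one must check that a prescribed Pell solution yields a matrix in the correct residue class modulo $2$, and this is precisely where the hypotheses $\Delta\equiv1\pmod4$, $\Delta_0$ odd, and $B\equiv0\pmod4$ are consumed; the rest of the argument is carried entirely by the explicit parametrization above.
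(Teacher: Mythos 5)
Your proof is correct and takes essentially the same route as the paper's: the same stabilizer relations $c=Au$, $b=-Cu$, $a-d=-Bu$ (which the paper records at the start of Section \ref{Pell}), the same identity $\Lambda_\omega^{\e}(\sigma)=\tfrac{t+u\sqrt{\Delta}}{2}$, the same explicit matrices $\Big(\begin{smallmatrix} t_0-Bu_0 & -2Cu_0 \\ 2Au_0 & t_0+Bu_0\end{smallmatrix}\Big)$ and $\Big(\begin{smallmatrix} t_0-B_0u_0 & -Cu_0 \\ Au_0 & t_0+B_0u_0\end{smallmatrix}\Big)$ for surjectivity, and the same parity bookkeeping. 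Your only departure is organizational: you set up the full $(t,u)$-parametrization of $\wTheta_\omega$ first, which has the minor virtue of making explicit the exclusion of $\sigma\equiv J_2\pmod 2$ in part (i), a point the paper leaves implicit (there it follows from $v$ being even, hence $b,c$ even).
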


\begin{proof}
(i) The entries of a matrix $\sigma=\Big( \begin{smallmatrix} a & b \\ c & d \end{smallmatrix}\Big)\in \wTheta_\omega$ must satisfy
$\frac{c}{A}=\frac{d-a}{B}=\frac{-b}{C}=v \in \Z$. We have $D:=(a-d)^2+4bc =
(a+d)^2 -4\det (\sigma)=v^2 \Delta $. Since $a-d\equiv 0 \pmod{2}$ and $\Delta$ is odd, it follows that $D=4u_0^2\Delta$ for some
$u_0\in\Z$. Setting $t_0:=\frac{a+d}{2}\in\Z$, one has $c\omega+d =\frac{a+d+\sqrt{D}}{2}=t_0+u_0\sqrt{\Delta}$,
with ${\mathfrak N} (c\omega +d)=t_0^2-\Delta u_0^2 =\det (\sigma)=\pm 1$. This shows $\Lambda_\omega^{\e} (\wTheta_\omega) \subseteq \FFF_\Delta$.

To show that $\Lambda_\omega^{\e}$ is surjective, note first that $2A\omega +B =\sqrt{\Delta}$. Let
$t_0+u_0\sqrt{\Delta} \in \FFF_\Delta$. Take $\sigma:=\Big( \begin{matrix} t_0 -Bu_0 & -2C u_0 \\ 2A u_0 & t_0+Bu_0 \end{matrix}\Big)
\in \operatorname{GL}(2,\Z)$. We have $(t_0-Bu_0)(t_0+Bu_0)=t_0^2 -\Delta u_0^2 -4AC yu_0^2 \equiv 1\pmod{2}$, hence
$\sigma \equiv I_2 \pmod{2}$, showing $\sigma\in \wTheta$.
The equality $\sigma\omega=\omega$ is checked by direct verification. Finally, we see that
$\Lambda_\omega^{\e} (\sigma)=t_0+(2A\omega +B)u_0=t_0+u_0\sqrt{\Delta}$.

(ii) In this case $D=u^2 \Delta=4u^2 \Delta_0$ and $c\omega+d =\frac{a+d}{2}+u\sqrt{\Delta_0} =t_0+u_0 \sqrt{\Delta}$
with $t_0^2-\Delta u_0^2 ={\mathfrak N} (c\omega+d)=\pm 1$, showing $c\omega+d \in \FFF_{\Delta_0}$.

Assume in addition $\Delta_0$ odd and $B \equiv 0\pmod{4}$. Write $B=2B_0$, $B_0\in\Z$.
Let $t_0+u_0\sqrt{\Delta_0}\in \FFF_{\Delta_0}$. Take $\sigma:=\Big( \begin{smallmatrix} t_0-B_0u_0 & -C u_0 \\
Au_0 & t_0+B_0u_0 \end{smallmatrix}\Big)$ with $\det(\sigma)=t_0^2 -\frac{\Delta}{4} u_0^2 =t_0^2 -\Delta_0u_0^2 =\pm 1$.
Again, the equalities $\sigma\omega=\omega$ and $\Lambda_\omega^{\e} (\sigma)=t_0+u_0\sqrt{\Delta_0}$ follow by direct verification,
so it only remains to check that $\sigma\equiv I_2$ or
$J_2 \pmod{2}$. For this, observe that since
$\Delta_0$ is odd and $t_0^2-\Delta_0 u_0^2 =\pm 1$ one has $t_0 \equiv u_0+1 \pmod{2}$.
This shows that when $u_0$ is even, $t_0$ must be odd and so $\sigma\equiv I_2 \pmod{2}$.
When $u_0$ is odd, $t_0$ must be even, so $\sigma\equiv \left( \begin{smallmatrix} -B_0 & -C \\ A & B_0 \end{smallmatrix}\right) \pmod{2}$.
But $B_0^2 -AC=\Delta_0$ is odd and $B_0$ is even, hence $AC$ is odd and consequently
$\sigma\equiv J_2 \pmod{2}$.
\end{proof}

The following statement (\cite[Proposition 3.2]{BV}, see also \cite{KL}) will be useful. The formulation given here on $[1,\infty)$ is immediately transported from
the one on $[0,1]$ by conjugating by the matrix $\left( \begin{smallmatrix} 0 & 1 \\ 1 & 0 \end{smallmatrix}\right)$.

\begin{lemma}\label{lem22}
Let $u \in [1,\infty)\setminus \Q$ and $\sigma =\Big( \begin{smallmatrix} a & b \\ c & d \end{smallmatrix}\Big)
\in \wTheta$. The following are equivalent:
\begin{itemize}
\item[(i)] There exists $m\geq 1$ such that $\sigma =\Big( \begin{smallmatrix} p_m(u) & p_{m-1}(u) e \\ q_m(u) & q_{m-1}(u) e \end{smallmatrix}\Big)$
for some $e\in \{ \pm 1\}$.
\item[(ii)] $\sigma \in \RRR_{\e}^+ (u) \cup \RRR_{\e}^- (u)$, where
\begin{equation*}
\RRR_{\e}^\varepsilon (u):= \bigg\{ \sigma \in \wTheta \ \bigg\vert \
\begin{matrix} a \geq c > \varepsilon d \geq 0 \\ \varepsilon b \geq \varepsilon d \geq 0 \end{matrix}
\ \ \mbox{\rm and} \ \bigg| \frac{du -b}{-cu +a}\bigg| > 1 \bigg\} ,\quad \varepsilon \in \{ \pm 1\}.
\end{equation*}
We also have $e=\operatorname{sgn} (b)=\operatorname{sgn} (d)$.
\end{itemize}
\end{lemma}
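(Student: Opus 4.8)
The plan is to run both implications through the single auxiliary quantity $t:=\sigma^{-1}u=\frac{du-b}{-cu+a}$, so that $u=\sigma t$, and to recover the digits of $u$ from a factorization of $\sigma$ into matrices $M(a_i,e_i)$. Throughout I set $\varepsilon:=\operatorname{sgn}(b)$.

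For the implication (i)$\Rightarrow$(ii), suppose $\sigma=\Omega_m(u)$, so that $a=p_m$, $c=q_m$, $b=p_{m-1}e_m$, $d=q_{m-1}e_m$. The chain of inequalities established in the ($\subseteq$) part of Proposition \ref{prop17} gives $p_m>q_m>q_{m-1}\geq 0$ and $p_{m-1}>q_{m-1}\geq 0$; taking $\varepsilon=e_m$ these read precisely $a\geq c>\varepsilon d\geq 0$ and $\varepsilon b\geq \varepsilon d\geq 0$, with $e=e_m=\operatorname{sgn}(b)$ (and $=\operatorname{sgn}(d)$ once $m\geq 2$). A direct comparison of $\sigma^{-1}u$ with \eqref{eq2.3} yields $\frac{du-b}{-cu+a}=\Te^m(u)>1$, so the condition $\bigl\lvert\frac{du-b}{-cu+a}\bigr\rvert>1$ holds and $\sigma\in\RRR_E^{\varepsilon}(u)$.

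The substance is the converse (ii)$\Rightarrow$(i), which I would split into a factorization step and a sign step. First, the inequalities defining $\RRR_E^{\varepsilon}(u)$, together with $\sigma\in\wTheta$ and $\det\sigma=\pm 1$, place $\sigma$ in the set $\SSS=\PP$ of Proposition \ref{prop17}: the borderline equalities $a=c$ and $\lvert b\rvert=\lvert d\rvert$ are excluded by the parity constraint (each would force a unit-size entry incompatible with $\sigma\equiv I_2$ or $J_2\pmod 2$), while $\lvert d\rvert=0$ is the base case $\sigma=M(a,\varepsilon)$. Thus $\sigma=M(a_1,e_1)\cdots M(a_m,e_m)$ with $a_i\in 2\N$, $e_i\in\{\pm1\}$, and $e_m=\varepsilon=\operatorname{sgn}(b)$. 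Second, and this is the crux, I must show $t>1$. Since $\lvert t\rvert>1$ is given, it suffices to prove $t>0$, i.e.\ that $du-b$ and $a-cu$ have the same sign. Writing $b=\varepsilon\lvert b\rvert$, $d=\varepsilon\lvert d\rvert$ and using $a\geq c\geq 1$, $c>\lvert d\rvert$, $\lvert b\rvert\geq\lvert d\rvert$, I would observe that the Möbius function $u\mapsto t(u)$ has its zero $\lvert b\rvert/\lvert d\rvert$ and its pole $a/c$ consecutive, their difference being $\det(\sigma)/(c\lvert d\rvert)=\pm 1/(c\lvert d\rvert)$; hence $\lvert t\rvert>1$ confines $u$ to the short interval abutting the pole, on which $t$ keeps a constant sign that the reduction inequalities together with $u\geq 1$ pin down to be $+$. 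The degenerate case $\lvert d\rvert=0$ is immediate from $t=\varepsilon/(u-a)$. With $t>1$ secured, $u=M(a_1,e_1)\cdots M(a_m,e_m)\,t$ exhibits $(a_1,e_1),\ldots,(a_m,e_m)$ followed by the ECF expansion of $t$ as an ECF expansion of $u$; by uniqueness of such expansions (each irrational $>1$ has a unique even integer within distance $1$, which fixes $a_1(u),e_1(u)$, and inductively all later digits) this is the genuine expansion, so $\sigma=\Omega_m(u)$ and (i) holds with $e=e_m$.

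I expect the positivity $t>0$ to be the main obstacle: because the defining inequalities of $\RRR_E^{\varepsilon}(u)$ do not involve $u$ directly, all the work of excluding the spurious branch $t\leq -1$ is carried by the interplay between $\lvert t\rvert>1$ and the reduction inequalities, and the boundary convergent ($\lvert d\rvert=0$) must be argued separately. An alternative and likely cleaner route, signalled in the paragraph preceding the statement, is to bypass this analysis and deduce the lemma from its $[0,1]$-counterpart \cite[Prop.\ 3.2]{BV} by conjugating with $J_2$; the effort then reduces to tracking how the reduction conditions and the quantity $\frac{du-b}{-cu+a}$ transform under $\sigma\mapsto J_2\sigma J_2$ and $u\mapsto 1/u$.
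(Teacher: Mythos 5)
Your treatment of (ii)$\Rightarrow$(i) rests on the claim that the hypotheses force $t=\sigma^{-1}u>0$, and that claim is false; the error originates in a misreading of (i). In (i) the sign $e$ is a parameter attached to $\sigma$ (it equals $\operatorname{sgn}(b)=\operatorname{sgn}(d)$, as the addendum says); it is \emph{not} required to coincide with the digit $e_m(u)$, so (i) is strictly weaker than the statement ``$\sigma=\Omega_m(u)$'' that you actually set out to prove. Concretely, take $\sigma=M(2,1)M(2,1)=\Big(\begin{smallmatrix}5&2\\2&1\end{smallmatrix}\Big)\in\wTheta$ and any irrational $u\in(\tfrac{5}{2},3)$. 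The displayed inequalities of $\RRR_{\e}^{+}(u)$ hold, and $t(u)=\frac{u-2}{5-2u}<-1$, so (ii) is satisfied while $t<0$. Here $u=[(2,1),(2,-1),\ldots]$, so $\Omega_2(u)=\Big(\begin{smallmatrix}5&-2\\2&-1\end{smallmatrix}\Big)$ and $\sigma\neq\Omega_m(u)$ for every $m$; nevertheless (i) does hold, with $m=2$ and $e=+1=-e_2(u)$. In general the set $\{\lvert t\rvert>1\}$ straddles the pole $a/c$ (in the example it is $(\tfrac{7}{3},3)\setminus\{\tfrac{5}{2}\}$): on one side $t>1$, on the other $t<-1$, and since parity forces $a>c$, hence $a/c>1$, both one-sided neighborhoods of the pole contain irrationals greater than $1$. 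Because the reduction inequalities do not involve $u$, nothing can exclude the branch $t<-1$; your ``pinning down'' step cannot be carried out, and the branch you tried to discard is not spurious at all --- it is exactly the case $e=-e_m(u)$ of (i).

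The repair is to keep both branches rather than kill one. Retain your factorization step $\sigma\in\SSS=\PP$ (note this already needs a small determinant argument to get $a>\varepsilon b$, i.e. $p^\prime>p$, which is not among the inequalities of $\RRR_{\e}^{\varepsilon}(u)$ and is not a ``borderline equality''). If $t>1$, your concatenation-plus-uniqueness argument correctly yields $\sigma=\Omega_m(u)$, i.e. (i) with $e=e_m(u)$. If $t<-1$, set $\sigma^\prime:=\sigma\Big(\begin{smallmatrix}1&0\\0&-1\end{smallmatrix}\Big)$: then $\sigma^\prime\in\wTheta$, it satisfies the same inequalities with $\varepsilon$ replaced by $-\varepsilon$, and $(\sigma^\prime)^{-1}u=-t>1$, so the first branch gives $\sigma^\prime=\Omega_m(u)$ and hence $\sigma=\Big(\begin{smallmatrix}p_m&-p_{m-1}e_m\\q_m&-q_{m-1}e_m\end{smallmatrix}\Big)$, which is (i) with $e=-e_m(u)$. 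The same misreading leaves your (i)$\Rightarrow$(ii) direction incomplete: when $e=-e_m(u)$ one gets $\sigma^{-1}u=-\Te^m(u)<-1$, which is precisely why the lemma is stated with $\big\lvert\frac{du-b}{-cu+a}\big\rvert>1$ rather than $\frac{du-b}{-cu+a}>1$. Finally, for comparison: the paper does not prove this lemma at all; it quotes \cite[Proposition 3.2]{BV} and transports the statement from $[0,1]$ to $[1,\infty)$ by conjugation, which is the alternative route you sketch in your closing paragraph.
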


\begin{lemma}\label{lem23}
Suppose $\omega =[\,\overline{(a_1,e_1),\ldots,(a_n,e_n)}\,]>1$, where $n=\operatorname{per} (\omega)$. Then
\begin{itemize}
\item[(i)] If $\sigma=\Big( \begin{smallmatrix} a & b \\ c & d \end{smallmatrix} \Big) \in \wTheta_\omega$ and $c\geq d\geq 0$, then
$e_n=+1$ and $\sigma =\Omega_E (\omega)^k$ for some $k\in \N$.
\item[(ii)] If $\sigma=\Big( \begin{smallmatrix} a & -b \\ c & -d \end{smallmatrix} \Big) \in \wTheta_\omega$ and $c\geq d\geq 0$, then
$e_n=-1$ and $\sigma =\Omega_E (\omega)^k$ for some $k\in \N$.
\end{itemize}
\end{lemma}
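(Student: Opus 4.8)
The plan is to deduce both parts from the description of convergent matrices in Lemma~\ref{lem22}, matching case (i) to the cone $\RRR_{\e}^+(\omega)$ and case (ii) to $\RRR_{\e}^-(\omega)$; I would carry out (i) and indicate at the end that (ii) is formally identical after switching the sign conventions. Throughout, $\omega=[\,\overline{(a_1,e_1),\ldots,(a_n,e_n)}\,]>1$ has minimal period $n$, so $\omega\in\RRR_{\e}(\Delta)$ by Proposition~\ref{prop14} and $\omega>1>\lvert\omega^*\rvert$. Given $\sigma=\big(\begin{smallmatrix} a&b\\ c&d\end{smallmatrix}\big)\in\wTheta_\omega$ with $c\geq d\geq 0$, the strategy is to verify the defining inequalities of $\RRR_{\e}^+(\omega)$, invoke Lemma~\ref{lem22} to write $\sigma$ as a convergent matrix $\Omega_m(\omega)$, and finally use periodicity to force $m$ to be a multiple of $n$.

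First I would extract the elementary consequences of $\sigma\omega=\omega$. If $c=0$, then irrationality of $\omega$ forces $\sigma=\pm I_2$, incompatible with $c\geq d\geq 0$; hence $c\geq 1$. By Lemma~\ref{lem8} the eigenvalues of $\sigma$ are $\lambda_1=c\omega+d$ and $\lambda_2=a-c\omega=\det(\sigma)/\lambda_1$. Since $c\geq 1$, $\omega>1$, $d\geq 0$, we have $\lambda_1>1$, hence $\lvert\lambda_2\rvert=1/\lambda_1<1$; from $\lvert a-c\omega\rvert<1$ and $\omega>1$ I read off $a\geq c$. For the strict inequality $c>d$: equality $c=d$ together with $\det(\sigma)=c(a-b)=\pm1$ would force $c=d=1$, but then the bottom row $(1,1)$ is congruent neither to that of $I_2$ nor of $J_2\pmod 2$, contradicting $\sigma\in\wTheta$. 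Thus $a\geq c>d\geq 0$.

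The crux is the remaining inequality $b\geq d$, which (after multiplying by $c>0$ and using $\det(\sigma)=ad-bc$) is equivalent to $d(a-c)\geq\det(\sigma)$. If $\det(\sigma)=-1$ this holds trivially, since $d\geq 0$ and $a\geq c$. If $\det(\sigma)=+1$, then $\lambda_2=1/\lambda_1>0$, so $a=c$ is impossible (it would give $\lambda_2=c(1-\omega)<0$); hence $a-c\geq 1$ and $d(a-c)\geq 1$ as soon as $d\geq 1$. The sole exception is $\det(\sigma)=+1$, $d=0$, which forces $c=1$, $b=-1$, i.e. $\sigma=M(a,-1)$ and $\omega=[\,\overline{(a,-1)}\,]$; this isolated period-one matrix has negative top-right entry and $e_n=-1$, so it properly belongs to case (ii) and is separated from (i) by the sign of $b$. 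In every other case $b\geq d\geq 0$, and combined with $a\geq c>d\geq 0$ this puts $\sigma\in\RRR_{\e}^+(\omega)$; the last defining condition $\lvert(d\omega-b)/(-c\omega+a)\rvert>1$ is automatic, since that M\"obius value equals $\sigma^{-1}\omega=\omega>1$.

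Lemma~\ref{lem22} now produces $m\geq 1$ and $e\in\{\pm1\}$ with $\sigma=\big(\begin{smallmatrix} p_m & p_{m-1}e\\ q_m & q_{m-1}e\end{smallmatrix}\big)=\Omega_m(\omega)$ and $e=\operatorname{sgn}(b)=+1$, so $e_m=+1$. By \eqref{eq2.3}, $\Te^m(\omega)=\Omega_m(\omega)^{-1}\omega=\omega$, and since $\omega$ is purely periodic of minimal period $n$ this forces $n\mid m$. Writing $m=kn$ and using $(a_{i+n},e_{i+n})=(a_i,e_i)$, the product $\Omega_{kn}(\omega)$ splits into $k$ equal blocks, giving $\sigma=\Omega_E(\omega)^k$ with $k\in\N$ and $e_n=e_{kn}=e_m=+1$, as claimed. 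Part (ii) follows by the identical argument applied to $\big(\begin{smallmatrix} a&-b\\ c&-d\end{smallmatrix}\big)$ and the cone $\RRR_{\e}^-(\omega)$, the exceptional boundary matrix now being $M(a,1)$. I expect the inequality $b\geq d$ --- equivalently the determination of the sign of $\omega^*$ --- to be the only real obstacle, precisely because of the degenerate period-one matrices $M(a,\pm1)$ living on the common boundary $d=0$ of the two cases.
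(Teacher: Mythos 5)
Your overall route is the same as the paper's: verify the cone conditions of Lemma \ref{lem22}, conclude that $\sigma$ has the shape of a convergent matrix, and then use minimality of the period to force the index to be a multiple of $n$. Your derivations of $a\geq c$ and $c>d$ (via Lemma \ref{lem8} and parity) and of $b\geq d$ (via the determinant dichotomy) are correct and only cosmetically different from the paper's direct estimates. Moreover, your isolation of the boundary matrix is a genuine catch: the paper's proof asserts ``$c-d\equiv 1\pmod 2$, so $c>d\geq 1$'', whereas parity only gives $c\neq d$; and for $\sigma=M(a,-1)=\big(\begin{smallmatrix} a & -1\\ 1 & 0\end{smallmatrix}\big)$, $\omega=[\,\overline{(a,-1)}\,]$, the hypothesis of (i) holds literally ($c=1\geq d=0$) while the conclusion $e_n=+1$ fails, so (i) must indeed be read, as you read it, with the sign of the second column as part of the hypothesis --- consistently with how the lemma is applied in Lemma \ref{lem24}, where the case $d=0$ is split off separately.

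There is, however, one genuine gap, at the step ``Lemma \ref{lem22} now produces $\sigma=\big(\begin{smallmatrix} p_m & p_{m-1}e\\ q_m & q_{m-1}e\end{smallmatrix}\big)=\Omega_m(\omega)$ and $e=\operatorname{sgn}(b)=+1$, so $e_m=+1$''. Lemma \ref{lem22} delivers only the displayed shape with a \emph{free} sign $e=\operatorname{sgn}(b)$ on the second column; it does not assert $e=e_m$, and it cannot, since both $\Omega_m(\omega)$ and $\Omega_m(\omega)\big(\begin{smallmatrix}1&0\\0&-1\end{smallmatrix}\big)$ satisfy the cone conditions. The equality $\big(\begin{smallmatrix} p_m & p_{m-1}e\\ q_m & q_{m-1}e\end{smallmatrix}\big)=\Omega_m(\omega)$ holds if and only if $e=e_m$, which is exactly what has to be proved, so ``so $e_m=+1$'' is circular as written; and without it you may not write $\Te^m(\omega)=\sigma^{-1}\omega=\omega$, on which everything after rests. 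The repair is one line: in general $\sigma=\Omega_m(\omega)\big(\begin{smallmatrix}1&0\\0&ee_m\end{smallmatrix}\big)$, hence by \eqref{eq2.3} $\omega=\sigma^{-1}\omega=e\,e_m\,\Te^m(\omega)$; since $\omega>1$ and $\Te^m(\omega)>1$, this forces $e_m=e=+1$ and then $\Te^m(\omega)=\omega$. This sign bookkeeping is precisely what the paper does by carrying the factor $\big(\begin{smallmatrix}1&0\\0&e_m\end{smallmatrix}\big)$ through the identity $\sigma=\Omega_E(\omega)^k M(a_1,e_1)\cdots M(a_r,e_r)\big(\begin{smallmatrix}1&0\\0&e_m\end{smallmatrix}\big)$ and letting the fixed-point relation determine $e_m$; note it is also the only place the asserted conclusion $e_n=+1$ can actually come from. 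A last, smaller point: in (ii) the ``identical argument'' needs one adjustment --- there the problematic determinant is $\det\sigma=-1$, and the positivity-of-$\lambda_2$ trick you used in (i) to get $a>c$ is unavailable; use instead the parity of the first column ($a\not\equiv c\pmod 2$ for $\sigma\in\wTheta$), which together with $a\geq c$ gives $a>c$.
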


\begin{proof}
In both cases we have $\sigma^{-1} \omega=\Big( \begin{smallmatrix} \pm d & \mp b \\ -c & a \end{smallmatrix} \Big)\omega=
\pm \frac{d\omega -b}{-c\omega +a} =\omega >1$, showing $\big| \frac{d\omega -b}{-c\omega+a}\big| =\omega >1$.
Note also that $c-d\equiv 1 \pmod{2}$, so $c>d\geq 1$.

(i) From $\big| \omega -\frac{a}{c}\big| =\big| \frac{a\omega+b}{c\omega+d}-\frac{a}{c}\big| =\frac{1}{c(c\omega+d)} < \frac{1}{c}$ we infer
$\lvert c\omega -a\rvert <1$. This shows $c-a <c\omega -a<1$, so $a\geq c\geq 2$. On the other hand, $b\geq \frac{ad-1}{c} \geq d-\frac{1}{c} >d-1$
gives $b\geq d$. By Lemma \ref{lem22}, $\sigma=\Big( \begin{smallmatrix} p_m (\omega) & p_{m-1}(\omega) \\ q_m(\omega) & q_{m-1}(\omega)
\end{smallmatrix}\Big)$ for some $m\geq 1$. Write $m=kn+r$, $k\geq 0$, $0\leq r<n$. Assume $r>0$. Rewriting the previous equality as
 $\sigma=\Omega_E (\omega)^k M(a_1,e_1)\cdots M(a_r,e_r) \Big( \begin{smallmatrix} 1 & 0 \\ 0 & e_m \end{smallmatrix}\Big)$ and employing
 $\Omega_E (\omega)^{-k}\sigma \omega =\omega$, we get $M(a_r,e_r)^{-1} \cdots M(a_1,e_1)^{-1} \omega =e_m \omega$, so
 $[ (a_{r+1},e_{r+1}),\ldots ,(a_n,e_n),\omega ]=e_m \omega >1$. This gives $e_m=+1$ and
 $\omega =[(a_1,e_1),\ldots,(a_r,e_r),\omega]$, contradiction.
 It follows that $r=0$, so $\sigma =\Omega_E (\omega)^k$. This also gives $e_n=e_{kn}= e_m = +1$.

(ii) Similar to (i), employ $c\omega -d > c-d \geq 1$ and $\omega =\frac{a\omega -b}{c\omega-d}$ to get
 $a\geq c\geq 2$ and $b\geq d$. By Lemma \ref{lem22}, $\sigma =\Big( \begin{smallmatrix} p_m (\omega) & -p_{m-1}(\omega) \\ q_m(\omega) & -q_{m-1}(\omega)
\end{smallmatrix}\Big)$ for some $m=kn+r\geq 1$, $0\leq r< n$. As in (i), when $r>0$ this gives
$\sigma =\Omega_E (\omega)^k M(a_1,e_1)\cdots M(a_r,e_r)\Big( \begin{smallmatrix} 1 & 0 \\ 0 & -e_m \end{smallmatrix}\Big)$, leading in turn to
$e_m=-1$ and $\omega=[(a_1,e_1),\ldots, (a_r,e_r),\omega]$, contradiction. Hence $r=0$, $\sigma =\Omega_E (\omega)^k$ and $e_n=-1$.
\end{proof}

With $\omega$ and $n$ as in Lemma \ref{lem23}, the number
\begin{equation*}
\epsilon:=\Lambda_\omega^{\e} (\Omega_E (\omega))=q_n \omega +q_{n-1} e_n
\end{equation*}
can be expressed as
\begin{equation*}
\epsilon =\frac{p_n+q_{n-1}e_n +\sqrt{(p_n+q_{n-1}e_n)^2 -4\delta_n}}{2} \in \FFF_\Delta ,
\end{equation*}
where $\delta_n:=(-e_1)\cdots (-e_n)$. Since $p_n+q_{n-1}e_n >0$, $\forall n\geq 1$, we have
\begin{equation*}
{\mathfrak N} (\epsilon)=1 \ \Longleftrightarrow \ (-e_1)\cdots (-e_n)=1 \ \Longleftrightarrow\ \epsilon \in \FFF_\Delta^+ .
\end{equation*}

\begin{lemma}\label{lem24}
Suppose that $\Delta \equiv 1 \pmod{4}$ and let $\sigma\in \wTheta_\omega$ \footnote{Recall that in this case $n$ must be even
and $\sigma\equiv I_2 \pmod{2}$.} with $\Lambda_\omega^{\e} (\sigma)>1$.
Then $\Lambda_\omega^{\e} (\sigma)=\epsilon^k$ for some $k\geq 1$. In particular, if $n$ is even and $\ep=\operatorname{per}(\omega)$,
then $\epsilon =\Lambda_\omega^{\e} (\Omega_E (\omega))$ is the fundamental unit of $\FFF_\Delta^+$.
\end{lemma}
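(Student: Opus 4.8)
The plan is to reduce the statement to the classification in Lemma~\ref{lem23}, which recognizes exactly which elements of $\wTheta_\omega$ are powers of $\Omega_E(\omega)$. Given $\sigma=\Big(\begin{smallmatrix} a & b \\ c & d\end{smallmatrix}\Big)\in\wTheta_\omega$ with $\Lambda:=\Lambda_\omega^{\e}(\sigma)=c\omega+d>1$, the goal is to establish the sign and size constraints $c\geq 1$ and $\lvert d\rvert\leq c$ on the bottom row, which are precisely the hypotheses needed to invoke Lemma~\ref{lem23}. Note first that $\Omega_E(\omega)\in\wTheta_\omega$: it fixes $\omega$, and since $\Delta\equiv 1\pmod 4$ forces $n$ even, $\Omega_E(\omega)\equiv J_2^{\,n}=I_2\pmod 2$; moreover $\epsilon=q_n\omega+q_{n-1}e_n>q_n-q_{n-1}\geq 1$, so $\epsilon>1$.

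The size constraint comes from the companion eigenvalue. By Lemma~\ref{lem8}(iii) the conjugate of $\Lambda$ is $\lambda_2=(c\omega+d)^*=c\omega^*+d$, and since $\lambda_1\lambda_2=\det(\sigma)=\pm 1$ with $\lambda_1=\Lambda>1$ we get $\lvert c\omega^*+d\rvert=1/\Lambda<1$. As $\omega$ is $E$-reduced we have $\lvert\omega^*\rvert<1$, hence $\lvert c\omega^*\rvert<\lvert c\rvert$, and the triangle inequality yields $\lvert d\rvert<\lvert c\rvert+1$, i.e. $\lvert d\rvert\leq\lvert c\rvert$. The sign constraint follows by ruling out $c\leq 0$: if $c=0$ then $\det(\sigma)=ad=\pm 1$ forces $\lvert d\rvert=1$, contradicting $\Lambda=d>1$; if $c<0$, then from $\lvert d\rvert\leq\lvert c\rvert=-c$ one gets $\Lambda=c\omega+d\leq c\omega-c=c(\omega-1)<0$ (since $c<0$ and $\omega>1$), again contradicting $\Lambda>1$. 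Thus $c\geq 1$ and $\lvert d\rvert\leq c$.

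With these in hand Lemma~\ref{lem23} applies directly: if $d\geq 0$ then $c\geq d\geq 0$ and part~(i) gives $\sigma=\Omega_E(\omega)^k$, while if $d<0$ then $c\geq -d\geq 0$ and part~(ii) gives $\sigma=\Omega_E(\omega)^k$. In either case $k\geq 1$, since $k=0$ would force $\sigma=I_2$ and $\Lambda=1$. The homomorphism property of $\Lambda_\omega^{\e}$ (Lemma~\ref{lem9}) then yields $\Lambda_\omega^{\e}(\sigma)=\Lambda_\omega^{\e}(\Omega_E(\omega))^k=\epsilon^k$, proving the first assertion. For the ``in particular'' statement I would combine this with the surjectivity $\Lambda_\omega^{\e}(\wTheta_\omega)=\FFF_\Delta$ of Lemma~\ref{lem21}(i): every unit $>1$ in $\FFF_\Delta$ is realized as some $\Lambda_\omega^{\e}(\sigma)$ and hence equals $\epsilon^k\geq\epsilon$, so no unit lies strictly between $1$ and $\epsilon$; thus $\epsilon$ is the smallest unit exceeding $1$ in $\FFF_\Delta$. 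When $\ep=\operatorname{per}(\omega)$, i.e. $\delta_n=+1$, we have $\mathfrak{N}(\epsilon)=+1$, so $\epsilon\in\FFF_\Delta^+$, and being the smallest unit $>1$ of $\FFF_\Delta$ it is a fortiori the smallest generator $>1$ of the infinite cyclic group $\FFF_\Delta^+$, hence its fundamental unit.

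The main obstacle is the bottom-row analysis that isolates the inequalities $c\geq 1$ and $\lvert d\rvert\leq c$ triggering Lemma~\ref{lem23}: this is where the $E$-reducedness $\lvert\omega^*\rvert<1$ and the norm-$\pm 1$ constraint must be balanced through the conjugate eigenvalue $c\omega^*+d$. Once these bounds are secured, the remainder is a short application of Lemma~\ref{lem23} and of the group isomorphism $\Lambda_\omega^{\e}$.
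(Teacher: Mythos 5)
Your proof is correct and takes essentially the same route as the paper's: both derive the bottom-row bounds $c\geq 1$ and $\lvert d\rvert\leq c$ from $\mathfrak{N}(\Lambda_\omega^{\e}(\sigma))=\det(\sigma)=\pm 1$ combined with $\lvert\omega^*\rvert<1$, then feed $\sigma$ into Lemma~\ref{lem23} and use the homomorphism property of $\Lambda_\omega^{\e}$ (Lemma~\ref{lem9}) to conclude $\Lambda_\omega^{\e}(\sigma)=\epsilon^k$. The only differences are cosmetic: the paper isolates $d=0$ as a separate case handled by hand (a case which in fact cannot occur under the hypothesis $\Delta\equiv 1\pmod 4$, since $\sigma\equiv I_2\pmod 2$ forces $d$ odd, though it matters for the extension in Remark~\ref{rem25}), while you apply Lemma~\ref{lem23} uniformly; and your derivation of the ``in particular'' claim from the surjectivity in Lemma~\ref{lem21}(i) makes explicit a step the paper leaves implicit.
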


\begin{proof}
Let $\sigma=\Big( \begin{smallmatrix} a & b \\ c & d \end{smallmatrix}\Big) \in \wTheta_\omega$ with $\eta:=c\omega +d>1$.
Since ${\mathfrak N} (\eta)=\eta\eta^* =\det (\sigma)=\pm 1$, we have $\lvert \eta^*\rvert =\lvert c\omega^* +d \rvert <1$, leading to
$\eta -\eta^* =c(\omega-\omega^*) >0$, and so $c\geq 1$. On the other hand $\omega^*\in (-1,1)$ yields
$-c +d< \eta^*  =c\omega^*+d <c+d$, showing $-c \leq d\leq c$. Actually we have $-c<d<c$ because
$c-d \equiv 1 \pmod{2}$. Two situations can occur:

(a) $d=0$, leading to $c=1$ (so $\eta=\omega$) and $b=\pm 1$. We get $a+\frac{\pm 1}{\omega}=\omega$, giving
$a>1$ and $\omega =[\overline{(a,\pm 1)}]$ with $\epsilon=q_1 \omega+q_0=\omega=\eta$.

(b) $c> \pm d \geq 1$, leading upon Lemma \ref{lem23} to $e_n=\pm 1$ and $\sigma =\Omega_E (\omega)^k$, $k\geq 1$. Therefore we get
$\eta =\Lambda_\omega^{\e} (\sigma)=\Lambda_\omega^{\e} (\Omega_E (\omega)^k) =\Lambda_\omega^{\e} (\Omega_E (\omega))^k =\epsilon^k $.
\end{proof}

\begin{remark}\label{rem25}
Lemma \ref{lem24} also works when $\Delta \equiv 0\pmod{4}$: if $(-e_1)\cdots (-e_n)=1$, then $\epsilon$ is the generator of the
(infinite cyclic) group $\Lambda_\omega^{\e} (\wTheta_\omega)^+
:=\big\{ \Lambda_\omega^{\e} (\sigma) \mid \sigma\in \wTheta_\omega, \det (\sigma) =1,\operatorname{Tr}(\sigma)>0\}$. When $\Delta=4\Delta_0$, $\Delta_0$ odd and
$B\equiv 0 \pmod{4}$, we also have $\Lambda_\omega^{\e} (\wTheta_\omega)^+ =\FFF_{\Delta_0}^+$.
\end{remark}

\section{Backward continued fractions}\label{sect4}
This section is concerned with backward continued fractions and $B$-reduced QIs.
We consider $\llb a_1,a_2,a_3,\ldots \rrb$ as in equation \eqref{eq1.10}.
The Gauss shift $\Tb$ acts on $[1,\infty)\setminus \Q$ as $\Tb (\llb a_1,a_2,a_3,\ldots\rrb) =\llb a_2,a_3,\ldots\rrb$, or,
in different notation,
\begin{equation*}
\Tb (u)= M(a_1,-1)^{-1} u =\left( \begin{matrix} 0 & -1 \\ 1 & -a_1 \end{matrix} \right)u =
\frac{1}{a_1-u}  =\frac{1}{1-\{ u\}},
\end{equation*}
where $a_1=a_1(u) =1+\lfloor u\rfloor\geq 2$.
The natural extension of the endomorphism $\Tb$ is the automorphism of $[1,\infty)\times [0,1]$ given by
\begin{equation*}
\widetilde{T}_{B} (u,v)=\bigg( \Tb (u) ,\frac{1}{a_1-v}\bigg) =\bigg( \frac{1}{a_1-u},\frac{1}{a_1-v}\bigg) ,
\end{equation*}
acting on the digits as a two-sided shift:
\begin{equation*}
\widetilde{T}_{B} (\llb a_1,a_2\ldots\rrb,\llb a_0,a_{-1},\ldots \rrb^{-1}) =
(\llb a_2,a_3,\ldots \rrb,\llb a_1,a_0,a_{-1},\ldots \rrb^{-1}) ,\qquad a_i\in \N, \ a_i\geq 2 .
\end{equation*}

\begin{center}
\begin{figure}
\includegraphics[scale=0.5,bb = 0 10 430 360]{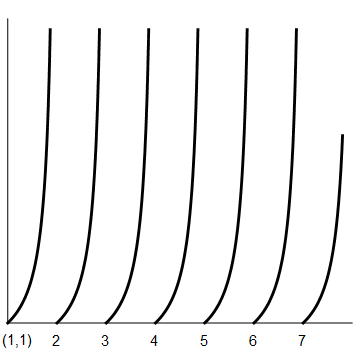}
\includegraphics[scale=0.5,bb = 0 10 280 360]{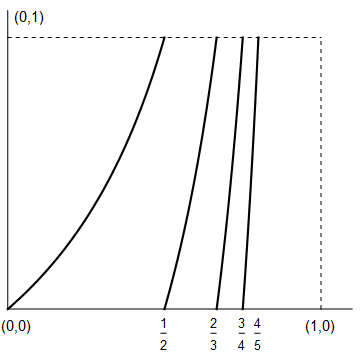}
\caption{The graphs of the maps $\Tb$ and $\oTb$ }\label{Figure2}
\end{figure}
\end{center}

From general ergodic theoretical considerations (\cite{AF}), the infinite measure $(u-v)^{-2} du dv$ is
$\widetilde{T}_{B}$-invariant,
while $d\mu_{B} =\big( \int_0^1 (u-v)^{-2} dv\big) du =\frac{du}{u(u-1)}$ is $\Tb$-invariant.

Conjugating through $J_{B} (x):=\frac{1}{1-x}$, one gets the R\' enyi-Gauss map
$\oTb:=J_{B}^{-1} \Tb J_{\b}$, which acts on $[0,1]$ as
in equation \eqref{eq1.11}
with invariant measure $\nu_{\b} =J_{\b\ast} \mu_{B} = \frac{dx}{x}$ (\cite{Re,AF}).
The endomorphism $\oTb$ is exact in
Rohlin's sense, and $\nu_{\b}$ is the unique $\sigma$-finite Lebesgue absolutely continuous
$\oTb$-invariant measure.

Given $a_i \geq 2$, define $\frac{p_0}{q_0}:=\frac{1}{0}$,
$\frac{p_1}{q_1}:=\frac{a_1}{1}$, $\frac{p_2}{q_2}:= a_1-\frac{1}{a_2}=\frac{a_1a_2-1}{a_2}$, and
\begin{equation*}
\frac{p_k}{q_k}:=\frac{a_k p_{k-1}-p_{k-2}}{a_k q_{k-1}-q_{k-2}} =
a_1 -\cfrac{1}{a_2- \cfrac{1}{\ddots -\cfrac{1}{a_k}}}\, ,\qquad \forall k\geq 2.
\end{equation*}

All algebraic computations with even continued fractions apply to backward continued fractions
taking $e_i=-1$, $\forall i\geq 1$. In particular,
as in Remark \ref{rem18} we have
\begin{equation*}
\begin{split}
\frac{q_n}{q_{n-1}} & = a_n -\frac{1}{\frac{q_{n-1}}{q_{n-2}}} =
a_n -\cfrac{1}{a_{n-1}- \cfrac{1}{\ddots -\cfrac{1}{a_2}}}\, ,\qquad \forall n\geq 2 ,\\
\frac{p_n}{p_{n-1}} & = a_n -\frac{1}{\frac{p_{n-1}}{p_{n-2}}} =
a_n -\cfrac{1}{a_{n-1}- \cfrac{1}{\ddots -\cfrac{1}{a_1}}}\, ,\qquad \forall n\geq 1.
\end{split}
\end{equation*}
We also have
\begin{equation}\label{eq4.1}
p_k>q_k>q_{k-1},\qquad  p_k > p_{k-1}>q_{k-1},
\end{equation}
and quite importantly,
\begin{equation}\label{eq4.2}
p_kq_{k-1}-p_{k-1} q_k =-1,\qquad \forall k\geq 1.
\end{equation}
This shows that the sequence $(\frac{p_k}{q_k})$ is decreasing and $u:=\lim_k \frac{p_k}{q_k} \in [1,\infty)\setminus \Q$ satisfies
\begin{equation}\label{eq4.3}
p_k -q_k u >0,\qquad \forall k\geq 0.
\end{equation}

In the opposite direction, the digits of $u$ are captured by $a_1=a_1(u):=1+\lfloor u\rfloor$ and
\begin{equation*}
a_{n+1}=a_{n+1}(u)=a_1 (\Tb^n (u)) =1+\lfloor \Tb^n (u)\rfloor .
\end{equation*}

The corresponding ECF formulas with $e_i=-1$, $\forall i$ provide
\begin{equation}\label{eq4.4}
\Tb^k (u)=\frac{p_{k-1}-q_{k-1}u}{p_k-q_k u} ,\qquad \forall k\geq 1.
\end{equation}
This gives
\begin{equation*}
u=\frac{\Tb^k (u)p_k -p_{k-1}}{\Tb^k (u) q_k-q_{k-1}} ,\qquad \forall k\geq 1,
\end{equation*}
and so
\begin{equation*}
\Tb (u) \Tb^2 (u)\cdots \Tb^k (u)=\frac{1}{p_k-q_k u},\qquad \forall k\geq 1.
\end{equation*}

\begin{definition}\label{def26}
The quadratic irrational $\omega >1$ is called {\em $B$-reduced} if $0<\omega^* <1$. We denote
\begin{equation*}
\RRR_{\b} (\Delta):=\{ \omega \in \X(\Delta) \mid \omega \ \mbox{$B$-reduced}\}\quad \mbox{and} \quad
\RRR_{\b}:=\bigcup\limits_{\Delta >0} \RRR_{\b} (\Delta) .
\end{equation*}
\end{definition}

Exactly as in Proposition \ref{prop14} one shows

\begin{proposition}\label{prop27}
For every quadratic irrational $\omega >1$, the following are equivalent:
\begin{itemize}
\item[(i)]
$\omega \in \RRR_{\b}$.
\item[(ii)]
$BCF(\omega)$ is periodic, i.e. $\omega =\llb \,\overline{a_1,\ldots,a_n}\,\rrb$.
\item[(iii)]
$\omega$ is a periodic point of the map $T_B$.
\end{itemize}
\end{proposition}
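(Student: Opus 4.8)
The plan is to follow the proof of Proposition~\ref{prop14} essentially verbatim, specializing every even-continued-fraction identity of Section~\ref{sectECF} to the choice $e_i=-1$ (as the text observes, all the algebraic computations survive this substitution, the only change being that the digits $a_i\geq 2$ are no longer required to be even). For the implication (ii)$\Rightarrow$(i) I would invoke the BCF analogue of the Galois formula of Lemma~\ref{lem10}. Running the argument of Lemma~\ref{lem10} with $e_i=-1$ on $\omega=\llb\,\overline{a_1,\ldots,a_n}\,\rrb$ identifies $\omega^*$ with the fixed point of the reversed word, yielding
\begin{equation*}
\omega^*=\frac{1}{\llb\,\overline{a_n,\ldots,a_1}\,\rrb}\in(0,1),
\end{equation*}
since $\llb\,\overline{a_n,\ldots,a_1}\,\rrb>1$. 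The sole inequality used in Lemma~\ref{lem10}, namely $\omega>1>\lvert\omega^*\rvert$, still holds, so the contradiction argument transports and gives $\omega\in\RRR_{\b}$.

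For (i)$\Rightarrow$(ii) I would first record the two transported auxiliary facts. The BCF analogue of Lemma~\ref{lem12} is immediate, as finiteness of $\{\omega\in\X(\Delta)\mid \omega>1,\ 0<\omega^*<1\}$ is precisely the portion of the proof of Lemma~\ref{lem12} treating the range $\omega>1>\omega^*>0$. The BCF analogue of Lemma~\ref{lem13} (forward invariance) follows from $\omega=a_1-\frac{1}{\Tb(\omega)}$: conjugating gives $\omega^*=a_1-\frac{1}{\Tb(\omega)^*}\in(0,1)$, hence $\frac{1}{\Tb(\omega)^*}\in(a_1-1,a_1)$ with $a_1\geq 2$, so $\Tb(\omega)^*\in(0,\frac{1}{a_1-1})\subseteq(0,1)$ and $\Tb(\omega)\in\RRR_{\b}(\Delta)$ by Remark~\ref{rem7}(A). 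With these in hand, finiteness produces $r\geq 0$ and $n\geq 1$ with $\Tb^r(\omega)=\Tb^{r+n}(\omega)$, and uniqueness of BCF expansions gives $a_s=a_{s+n}$ for all $s\geq r+1$. I would then back-propagate exactly as in Proposition~\ref{prop14}: writing $\omega_i:=\Tb^i(\omega)$ and $\beta:=\omega_k^*=\omega_{k+n}^*\in(0,1)$, forward invariance forces $a_k-\frac1\beta,\ a_{k+n}-\frac1\beta\in(0,1)$, so both $a_k$ and $a_{k+n}$ lie in the open interval $(\frac1\beta,1+\frac1\beta)$.

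The step I expect to carry the weight is precisely this digit-recovery, where one must conclude $a_k=a_{k+n}$. Here the BCF structure differs pleasantly from the ECF case: in Proposition~\ref{prop14} the constraining interval had length $2$ and the evenness of the digits was needed to pin down $(a_k,e_k)$, whereas now the interval $(\frac1\beta,1+\frac1\beta)$ has length $1$ and, since $\beta$ is irrational, contains a unique integer, forcing $a_k=a_{k+n}$ at once. Consequently $\omega_{k-1}=a_k-\frac{1}{\omega_k}=a_{k+n}-\frac{1}{\omega_{k+n}}=\omega_{k+n-1}$, and letting $k$ descend to $0$ yields $\omega=\Tb^n(\omega)$, so $BCF(\omega)$ is periodic. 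Finally, (ii)$\Leftrightarrow$(iii) is read off from the shift action $\Tb(\llb a_1,a_2,\ldots\rrb)=\llb a_2,a_3,\ldots\rrb$ recorded at the start of this section (equivalently, from the Möbius form \eqref{eq4.4}), which makes periodicity of the expansion equivalent to $\omega$ being a periodic point of $\Tb$.
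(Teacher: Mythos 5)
Your proposal is correct and coincides with the paper's own treatment: the paper proves Proposition~\ref{prop27} simply by asserting ``Exactly as in Proposition~\ref{prop14} one shows,'' i.e.\ by specializing the ECF argument to $e_i=-1$, which is precisely what you carry out. Your handling of the one step that genuinely changes --- digit recovery via the length-one interval $(\frac{1}{\beta},1+\frac{1}{\beta})$, where integrality alone forces $a_k=a_{k+n}$, replacing the length-two interval plus evenness used in the ECF case --- is exactly the right adaptation.
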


For every $u \in [1,\infty)\setminus \Q$ consider the set
\begin{equation*}
{\mathscr M}_{\b} (u):=\left\{ \sigma =\left( \begin{matrix} p^\prime & -p \\ q^\prime & -q \end{matrix}\right) \in \operatorname{SL}(2,\Z)\ \bigg|\
\begin{matrix} p^\prime >q^\prime >q>0,\  p^\prime >p> q \\
p^\prime -q^\prime u >0,\  E_\sigma (u):=\frac{p-qu}{p^\prime -q^\prime u} > 1 \end{matrix}\right\} .
\end{equation*}

Although the following statement is similar with Lemma \ref{lem22}, we provide a proof for the convenience of the reader.

\begin{lemma}\label{lem28}
Let $u\in [1,\infty)\setminus \Q$.
For every $\sigma =\left( \begin{smallmatrix} p^\prime & -p \\ q^\prime & -q \end{smallmatrix}\right) \in \operatorname{SL}(2,\Z)$, the following are equivalent:
\begin{itemize}
\item[(i)]
$\sigma \in {\mathscr M}_{\b} (u)$.
\item[(ii)]
$\frac{p^\prime}{q^\prime}$ and $\frac{p}{q}$ are consecutive $BCF (u)$-convergents.
\end{itemize}
\end{lemma}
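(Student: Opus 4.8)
The plan is to prove the two implications separately, treating $(\mathrm{ii})\Rightarrow(\mathrm{i})$ as a direct reading-off from the convergent relations and $(\mathrm{i})\Rightarrow(\mathrm{ii})$ by an inductive argument that strips the last $\mathrm{BCF}$ matrix $M(a,-1)$ off $\sigma$ on the right. Throughout I would use the interpretation $E_\sigma(u)=\sigma^{-1}u$, which follows since $\sigma^{-1}=\left(\begin{smallmatrix} -q & p \\ -q' & p'\end{smallmatrix}\right)$ gives $\sigma^{-1}u=\frac{p-qu}{p'-q'u}$.

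For $(\mathrm{ii})\Rightarrow(\mathrm{i})$, suppose $\frac{p'}{q'}=\frac{p_k}{q_k}$ and $\frac{p}{q}=\frac{p_{k-1}}{q_{k-1}}$ are consecutive convergents, necessarily with $k\ge 2$ so that $q=q_{k-1}\ge 1$ (the index $k=1$ would force $q=q_0=0$). Then $\sigma=M(a_1,-1)\cdots M(a_k,-1)=\left(\begin{smallmatrix} p_k & -p_{k-1}\\ q_k & -q_{k-1}\end{smallmatrix}\right)$, and every condition defining ${\mathscr M}_{\b}(u)$ is immediate: the inequalities $p'>q'>q>0$ and $p'>p>q$ are \eqref{eq4.1}, the determinant $+1$ is \eqref{eq4.2}, the condition $p'-q'u=p_k-q_ku>0$ is \eqref{eq4.3}, and $E_\sigma(u)=\frac{p_{k-1}-q_{k-1}u}{p_k-q_ku}=\Tb^k(u)>1$ by \eqref{eq4.4} (being an iterate of $\Tb$, it lies in $(1,\infty)$). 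Hence $\sigma\in{\mathscr M}_{\b}(u)$.

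For the converse I would induct on the bottom-right entry $q\ge 1$ of $\sigma\in{\mathscr M}_{\b}(u)$. The relation $pq'-p'q=1$ forces $\gcd(q,q')=1$. When $q\ge 2$, let $a:=\lceil q'/q\rceil\ge 2$ be the unique integer with $0<aq-q'<q$ (here $a\ge 2$ because $q'>q$), and set $\sigma_0:=\sigma M(a,-1)^{-1}=\left(\begin{smallmatrix} p & p'-ap\\ q & q'-aq\end{smallmatrix}\right)$. Writing $P:=ap-p'$ and $Q:=aq-q'$, I would check $\sigma_0=\left(\begin{smallmatrix} p & -P\\ q & -Q\end{smallmatrix}\right)\in{\mathscr M}_{\b}(u)$ with strictly smaller bottom-right entry $Q<q$: positivity $P>0$ comes from $a\ge q'/q>p'/p$ (the last inequality is just $\frac{p'}{q'}<\frac pq$, i.e. $\det\sigma=1$); the analytic conditions transfer because $p-qu>0$ (as $E_\sigma(u)>0$) and $E_{\sigma_0}(u)=a-\frac{1}{E_\sigma(u)}>a-1\ge 1$; and the two remaining inequalities $p>P$ and $P>Q$ fall out of $\det\sigma_0=Pq-pQ=1$ together with $p\ge 2$, since $p>P\Leftrightarrow p(q-Q)>1$ and $P>Q\Leftrightarrow 1+Q(p-q)>0$. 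By the inductive hypothesis $\frac pq$ and $\frac PQ$ are consecutive convergents, say $\sigma_0=\left(\begin{smallmatrix} p_{k-1} & -p_{k-2}\\ q_{k-1} & -q_{k-2}\end{smallmatrix}\right)$; then $\sigma=\sigma_0M(a,-1)=\left(\begin{smallmatrix} ap_{k-1}-p_{k-2} & -p_{k-1}\\ aq_{k-1}-q_{k-2} & -q_{k-1}\end{smallmatrix}\right)$, and the analytic hypothesis identifies the peeled digit with the true one: since $E_{\sigma_0}(u)=\Tb^{k-1}(u)=a-\frac{1}{E_\sigma(u)}$ with $a\in\Z_{\ge 2}$ and $E_\sigma(u)>1$, the unique representation of the irrational $\Tb^{k-1}(u)>1$ as an integer minus a reciprocal of a number $>1$ gives $a=1+\lfloor\Tb^{k-1}(u)\rfloor=a_k(u)$. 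Therefore $\sigma=\left(\begin{smallmatrix} p_k & -p_{k-1}\\ q_k & -q_{k-1}\end{smallmatrix}\right)$ and $\frac{p'}{q'}=\frac{p_k}{q_k}$, $\frac pq=\frac{p_{k-1}}{q_{k-1}}$ are consecutive convergents.

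The base case $q=1$ I would settle directly. Here $\det\sigma=1$ gives $p'=pq'-1$, while $p'-q'u>0$ and $E_\sigma(u)>1$ translate, after substituting $p'=pq'-1$, into $\frac1{q'}<p-u<\frac1{q'-1}$; irrationality of $u$ then pins $p=1+\lfloor u\rfloor=a_1$, and using $p-u=a_1-u=\frac{1}{\Tb(u)}$ forces $q'-1<\Tb(u)<q'$, i.e. $q'=1+\lfloor\Tb(u)\rfloor=a_2$ and $p'=a_1a_2-1$, so that $\frac pq=\frac{p_1}{q_1}$ and $\frac{p'}{q'}=\frac{p_2}{q_2}$. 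I expect the main obstacle to be keeping $\sigma_0$ inside ${\mathscr M}_{\b}(u)$ under peeling — specifically the inequality $p>P$ that lets the induction proceed, which at first glance seems to require a delicate comparison of $q'/q$ with $p'/p$ but in fact drops out immediately from $\det\sigma_0=1$ and $p\ge 2$; the second delicate point is matching the peeled digit $a$ with the genuine $\mathrm{BCF}$ digit of $u$, which is precisely the role played by the hypothesis $E_\sigma(u)>1$.
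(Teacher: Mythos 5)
Your proof is correct and takes essentially the same route as the paper's: (ii)$\Rightarrow$(i) is read off from \eqref{eq4.1}--\eqref{eq4.4}, and (i)$\Rightarrow$(ii) is proved by induction, stripping the last BCF factor $M(a,-1)$ off the right of $\sigma$ and checking that $\sigma_0=\sigma M(a,-1)^{-1}$ remains in ${\mathscr M}_B(u)$. The differences are only bookkeeping: the paper inducts on the numerator entry $p$ and defines $a=1+\lfloor p^\prime/p\rfloor$, whereas you induct on the denominator entry $q$ with $a=\lceil q^\prime/q\rceil$; your verification of $E_{\sigma_0}(u)>1$ via the relation $E_{\sigma_0}(u)=a-1/E_\sigma(u)$ is tidier than the paper's inequality chain, and you make explicit both the final digit identification $a=a_k(u)$ and the $k\ge 2$ (equivalently $q\ge 1$) convention that the paper leaves implicit by allowing $q_0=0$ in its base case.
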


\begin{proof}
(ii) $\Longrightarrow$ (i) Suppose $\frac{p}{q}=\frac{p_n}{q_n}=\frac{p_n(u)}{q_n(u)}$.
Then \eqref{eq4.1}--\eqref{eq4.4} imply $\sigma \in {\mathscr M}_{\b}(u)$.

(i) $\Longrightarrow$ (ii) We prove by induction that the proposition
\begin{equation*}
\begin{split}
P(m):=`` \bigg( \forall u\in [1,\infty)\setminus \Q\bigg) & \bigg( \forall \sigma_0=\left( \begin{matrix}
p & -p_0 \\ q & -q_0 \end{matrix}\right) \in {\mathscr M}_{\b} (u),\ p_0 <m\bigg) \\ &
\bigg( \frac{p}{q}\ \mbox{\rm and}\ \frac{p_0}{q_0}\ \mbox{\rm are consecutive $BCF(u)$-convergents} \bigg)"
\end{split}
\end{equation*}
holds for every $m\geq 2$.

When $m=2$ we get $p_0=1$, $q_0=0$, $q=1$, so $\sigma_0=\Big( \begin{smallmatrix} p & -1 \\ 1 & 0 \end{smallmatrix}\Big)$.
Since $E_{\sigma_0}(u)=\frac{1}{p-u}>1$ and $p-u=p-qu >0$, we infer $p>u>p-1$. This shows that $u=p-\frac{1}{\ddots}$, hence
$\frac{p}{q}=\frac{p}{1}=\frac{p_1}{q_1}$ and $\frac{p_0}{q_0}=\frac{1}{0}$  are consecutive $BCF(u)$-convergents.

Suppose $P(m)$ holds for every $2\leq m\leq m_0$. Let
$\sigma =\Big( \begin{smallmatrix} p^\prime & -p \\ q^\prime & -q \end{smallmatrix} \Big) \in {\mathscr M}_{\b}(u)$ with $p=m_0$.
Then $a:=1+\big\lfloor \frac{p^\prime}{p}\big\rfloor \geq 2$. Let $p_0:=ap-p^\prime$, $q_0:=aq-q^\prime$. From
$a-1 \leq \frac{p^\prime}{p} <a$ we get $p\geq p_0=ap-p^\prime >0$. Actually $p>p_0$ because $p=p_0$ would imply
$p\mid p^\prime$, thus contradicting $(p,p^\prime)=1$.  So $p>p_0 >0$. Since $(p,p^\prime)=1$ we also have $a-1 < \frac{p^\prime}{p}$,
hence $q_0=aq-q^\prime < (\frac{p^\prime}{p}+1) q-q^\prime =\frac{p^\prime q+pq-pq^\prime}{p}=\frac{pq-1}{p} <q$.
On the other hand $q_0> \frac{p^\prime}{p} q-q^\prime=-\frac{1}{p} \geq -\frac{1}{2}$, showing $q>q_0\geq 0$. The inequality $p_0>q_0$ follows from
$p_0-q_0=a(p-q)-(p^\prime-q^\prime) > \frac{p^\prime}{p} (p-q)-(p^\prime -q^\prime) = \frac{pq^\prime-p^\prime q}{p} =\frac{1}{p} >0$,
while $p^\prime -q^\prime u >0$ and $\frac{p^\prime}{q^\prime} < \frac{p}{q}$ yield $p-qu>0$.

Finally, $E_{\sigma_0}(u)=\frac{p_0-q_0 u}{p-qu} >1$ is equivalent to $u> \frac{p-p_0}{q-q_0}$. The latter holds because
$E_\sigma (u)=\frac{p-qu}{p^\prime -q^\prime u} >0$ and $p^\prime -q^\prime u>0$ entail $u>\frac{p^\prime-p}{q^\prime -q}$, while
$\frac{p^\prime-p}{q^\prime -q} \geq \frac{p-p_0}{q-q_0} =\frac{p^\prime -(a-1)p}{q^\prime -(a-1)q}$ is equivalent to
the manifestly true $(a-2)(p^\prime q-pq^\prime)=2-a \leq 0$.
\end{proof}

As a result of \eqref{eq4.2}, here we always have $\widetilde{\Omega}_B (\omega)=\Omega_B (\omega)$.
The $B$-reduced quadratic irrationals $\omega$ will hereby be ordered by the spectral radius
of the matrix $\Omega_B (\omega)$. A precise asymptotic formula for the cardinality of the set
\begin{equation*}
\SSS_{\b} (\alpha,\beta;N):=\left\{ \left( \begin{matrix} p^\prime & -p \\ q^\prime & -q \end{matrix}\right)\in \operatorname{SL}(2,\Z) \
\bigg|\  \begin{matrix}
p\geq \alpha q,\ p^\prime \geq \beta p  \\ p^\prime >q^\prime >q \geq 0,\  p^\prime -q \leq N \end{matrix} \right\} ,
\qquad \alpha,\beta \geq 1,
\end{equation*}
as $N\rightarrow \infty$, will be proved in Section \ref{sect6}.
Finally, the approximation arguments detailed in Section \ref{approx} in the ECF situation will apply
ad litteram to the BCF situation, where all $e_i$'s are equal to $-1$.
This will allow us to conclude that $\lvert \SSS_{\b} (\alpha,\beta;N)\rvert$ provides an accurate approximation of
$r_B(\alpha,\beta;R)$, where $e^R=N^2 \rightarrow \infty$, concluding the proof of Theorem \ref{thm4}.

\section{Some estimates involving Euler's totient sums}
To derive asymptotic estimates for the number of $B$-reduced and $E$-reduced quadratic irrationals $\omega$ with
$(\omega,\omega^\ast) \in [\alpha ,\infty) \times [-\frac{1}{\beta_2},\frac{1}{\beta_1}]$ we will need
some detailed number theoretical estimates involving Euler's totient function.
We consider the following sums:

\begin{equation}\label{eq5.1}
\begin{split}
S_0(N) & :=\sum\limits_{m\leq N} \varphi (m) =\frac{N^2}{2\zeta(2)} +O(N\log N), \\
S_0^O (N) & :=\sum\limits_{\substack{m\leq N \\ m\, \operatorname{odd}}} \varphi (m) =\sum\limits_{a\leq \frac{N}{2}} \varphi (4a),\qquad
S_0^{\e} (N) := \sum\limits_{\substack{m\leq N \\ m\, \operatorname{even}}} \varphi (2m) ,\\
S_1 (N) & := \sum\limits_{m\leq N} \frac{\varphi(m)}{m} = \frac{N}{\zeta (2)}+O(\log N) ,\\
S_1^O (N) & := \sum\limits_{\substack{m\leq N \\ m\, \operatorname{odd}}} \frac{\varphi (m)}{m} =\sum\limits_{a\leq \frac{N}{2}} \frac{\varphi(4a)}{2a}  ,\qquad
S_1^{\e} (N) :=\sum\limits_{\substack{m\leq N \\ m\, \operatorname{even}}} \frac{\varphi (2m)}{m} ,\\
S_2 (N) & :=\sum\limits_{m\leq N} \frac{\varphi (m)}{m^2} =\frac{1}{\zeta (2)} \bigg( \log N+\gamma -\frac{\zeta^\prime (2)}{\zeta(2)} \bigg)
+O (N^{-1}\log N)  , \\
S_2^O (N) & := \sum\limits_{\substack{m\leq N \\ m\, \operatorname{odd}}} \frac{\varphi (m)}{m^2},\qquad
S_2^{\e} (N) :=\sum\limits_{\substack{m\leq N \\ m\, \operatorname{even}}} \frac{\varphi (2m)}{m^2}
=\sum\limits_{a\leq \frac{N}{2}} \frac{\varphi (4a)}{4a^2} .
\end{split}
\end{equation}

The estimates for $S_0(N)$ and $S_1(N)$ are well known. A proof of estimate for $S_2 (N)$ can be found in \cite[Cor. 4.5]{Bo}
(see also Chapter 3 in \cite{Te}). It relies essentially on some version of Perron's integral formula.
Estimates for $S_2^O(N)$ and $S_2^E(N)$ are derived in analogous manner here, but require additional care.

For every positive integer $\ell$, define
\begin{equation*}
C(\ell) := \frac{\varphi(\ell)}{\zeta(2)\ell} \prod\limits_{p\vert \ell} \bigg( 1-\frac{1}{p^2}\bigg)^{-1},
\end{equation*}
with $C(2)=C(4)=\frac{2}{3\zeta(2)}$. By Lemmas 2.1 and 2.2 in \cite{BG} we infer
\begin{equation}\label{eq5.2}
S_0^O (N) =\frac{C(2) N^2}{2}+O (N\log N)
=\frac{N^2}{3\zeta(2)} +O (N\log N),
\end{equation}
\begin{equation}\label{eq5.3}
S_0^{\e} (N) =\sum\limits_{a\leq \frac{N}{2}} \varphi (4a) =
\frac{4C(4)N^2}{8} +O (N\log N) =
\frac{N^2}{3\zeta(2)} +O (N\log N) ,
\end{equation}
\begin{equation}\label{eq5.4}
S_1^O (N) =C(2) N +O (\log^2 N) =
\frac{2N}{3\zeta(2)} +O (\log^2 N) ,
\end{equation}
\begin{equation}\label{eq5.5}
S_1^{\e} (N) =\sum\limits_{a\leq \frac{N}{2}} \frac{\varphi(4a)}{2a}
=\frac{4C(4) N}{4}+O (\log^2 N)
=\frac{2N}{3\zeta(2)} +O (\log^2 N ) .
\end{equation}

To estimate $S_2^O (N)$ and $S_2^{\e} (N)$, we follow closely the proof
of \cite[Lemma 4.4]{Bo} and \cite[Chapter 3]{Te}. We will first estimate the sums
\begin{equation*}
\widetilde{S}_2^O (N):= \sum\limits_{\substack{m\leq N \\ m \,\operatorname{odd}}} \frac{\varphi(m)}{m^2}(N-m)^2 \qquad
\mbox{\rm and} \qquad \widetilde{S}_2^{\e} (N) :=\sum\limits_{\substack{m\leq N \\ m\equiv 0 \pmod{4}}} \frac{\varphi(m)}{m^2}(N-m)^2 ,
\end{equation*}
employing the Perron formula
\begin{equation*}
\frac{1}{\pi i} \int_{\sigma_0 -i\infty}^{\sigma_0+i\infty} \frac{y^s}{s(s+1)(s+2)} \, ds =\begin{cases}
0 & \mbox{\rm if $0\leq y\leq 1$} \\ (1-y^{-1})^2 & \mbox{\rm if $y\geq 1$.} \end{cases} \qquad (\sigma_0 >0)
\end{equation*}

We consider the Dirichlet series
\begin{equation}\label{eq5.6}
\begin{split}
\zeta_O (s) & := \sum\limits_{\substack{m=1 \\ m\, \operatorname{odd}}}^\infty \frac{1}{m^s} =\frac{1}{1+\sum\limits_{k=1}^\infty \frac{\varphi(2^k)}{2^{ks}}}
\prod\limits_p \bigg( 1 + \sum\limits_{k=1}^\infty \frac{\varphi(p^k)}{p^{ks}} \bigg) \\ &
=\frac{2^s-2}{2^s-1} \cdot \frac{\zeta (s-1)}{\zeta(s)}\qquad \mbox{\rm and}
\end{split}
\end{equation}
\begin{equation*}
\begin{split}
\zeta_{\e} (s) & :=\sum\limits_{\substack{m=1 \\ m\equiv 0 \pmod{4}}}^\infty
\frac{1}{m^s} =\frac{\sum\limits_{k=2}^\infty \frac{\varphi(2^k)}{2^{ks}}}{1+\sum\limits_{k=1}^\infty \frac{\varphi(2^k)}{2^{ks}}}
\prod\limits_p \bigg( 1 + \sum\limits_{k=1}^\infty \frac{\varphi(p^k)}{p^{ks}} \bigg)
\\ & =\frac{1}{2^{s-1}(2^s-1)} \cdot \frac{\zeta(s-1)}{\zeta(s)}  \qquad (\operatorname{Re} s >2).
\end{split}
\end{equation*}

Employing the Perron formula with $y=\frac{N}{m}$ and \eqref{eq5.6}, we infer
\begin{equation*}
\begin{split}
\widetilde{S}_2^O (N) & := \sum\limits_{\substack{m\leq N \\ m\, \operatorname{odd}}} \varphi(m)\, \frac{N^2}{m^2} \bigg( 1-\frac{m}{N}\bigg)^2 \\ &
=\frac{1}{\pi i} \int_{\sigma_0-i\infty}^{\sigma_0+i\infty} \sum\limits_{\substack{ m=1 \\ m\, \operatorname{odd}}}^\infty \frac{\varphi(m)}{m^{s+2}} \cdot
\frac{N^{s+2}}{s(s+1)(s+2)}\, ds
 = \frac{1}{2\pi i} \int_{\sigma_0-i\infty}^{\sigma_0+i\infty} g_N^O (s)\, ds, \qquad (\sigma_0>0)
\end{split}
\end{equation*}
where
\begin{equation*}
\begin{split}
g_N^O (s) & =\frac{2N^{s+2}}{s(s+1)(s+2)} \cdot \frac{2^{s+2}-2}{2^{s+2}-1} \cdot \frac{\zeta(s+1)}{\zeta(s+2)} \, ds \\ &
= \frac{2N^{s+2} (2^{s+2}-2)}{(s+1)(s+2)(2^{s+2}-1)\zeta(s+2)} \bigg( \frac{1}{s^2}+\frac{\gamma}{s}+O(1)\bigg) \qquad (s\rightarrow 0)
\end{split}
\end{equation*}
defines a meromorphic function in the region $\operatorname{Re} s > -2$ with a removable singularity at $s=-1$ and a simple pole at $s=0$.

Moving the contour of integration exactly as in \cite[Lemma 4.4]{Bo} we get
\begin{equation*}
\widetilde{S}_2^O (N) =\underset{s=0}{\operatorname{Res}} \ g_N^O (s) =h_N^{O \,  \prime} (0)+O(N),
\end{equation*}
where
\begin{equation*}
h_N^O (s):=\frac{2N^{s+2}(2^{s+2}-2)(1+\gamma s)}{(s+1)(s+2)(2^{s+2}-1)\zeta (s+2)} .
\end{equation*}

Employing the logarithmic derivative of $h_N^O$, we get
\begin{equation}\label{eq5.7}
\widetilde{S}_2^O (N) =\frac{2N^2}{3\zeta(2)} \bigg( \log N +\gamma +\frac{2\log 2}{3}-\frac{3}{2} -\frac{\zeta^\prime(2)}{\zeta(2)}\bigg) +O(N).
\end{equation}

In similar fashion we find
\begin{equation*}
\widetilde{S}_2^{\e} (N)
 = \frac{1}{2\pi i} \int_{\sigma_0-i\infty}^{\sigma_0+i\infty} g_N^E (s)\, ds
 =\underset{s=0}{\operatorname{Res}}\  g_N^{\e} (s) +O(N) =h_N^{\e \,\prime} (0)+O(N),
\end{equation*}
where
\begin{equation*}
\begin{split}
g_N^{\e} (s)  &
= \frac{2N^{s+2}}{(s+1)(s+2)2^{s+1}(2^{s+2}-1)\zeta(s+2)} \bigg( \frac{1}{s^2}+\frac{\gamma}{s}+O(1)\bigg) \quad \mbox{\rm and} \\
h_N^{\e} (s) & =\frac{2N^{s+2} (1+\gamma s)}{(s+1)(s+2)2^{s+1}(2^{s+2}-1)\zeta(s+2)} .
\end{split}
\end{equation*}

Employing the logarithmic derivative of $h_N^{\e}$ we get
\begin{equation}\label{eq5.8}
\widetilde{S}_2^{\e} (N) =\frac{N^2}{6\zeta(2)} \bigg( \log N +\gamma -\frac{7\log 2}{3}-\frac{3}{2} -\frac{\zeta^\prime(2)}{\zeta(2)}\bigg) +O(N).
\end{equation}

Employing $N^2 S_0^O (N)-2N S_1^O (N) +S_0^O (N)=\widetilde{S}_2^O (N)$ and \eqref{eq5.2}, \eqref{eq5.4}, \eqref{eq5.7}, we infer
\begin{equation*}
S_2^O (N)=\sum\limits_{\substack{m\leq N \\ m\, \operatorname{odd}}} \frac{\varphi(m)}{m^2} =
\frac{2}{3\zeta(2)} \bigg( \log N +\gamma +\frac{2\log 2}{3} -\frac{\zeta^\prime (2)}{\zeta(2)}\bigg)
+O (N^{-1}\log^2 N) ,
\end{equation*}
so for every $\theta >1$,
\begin{equation}\label{eq5.9}
S_2^O (N) -S_2^O \bigg( \frac{N}{\theta}\bigg) = \frac{2\log \theta}{3\zeta(2)} +O (N^{-1}\log^2 N) .
\end{equation}

Employing $N^2 S_2^{\e} (N) -2N S_1^{\e} (N)+S_0^{\e} (N)=\widetilde{S}_2^{\e} (2N)$ and \eqref{eq5.3}, \eqref{eq5.5},
\eqref{eq5.8}, we infer
\begin{equation*}
S_2^{\e} (N) =\sum\limits_{\substack{m\leq N \\ m\,\operatorname{even}}} \frac{\varphi(2m)}{m^2}=
\frac{2}{3\zeta(2)} \bigg( \log N +\gamma -\frac{4\log 2}{3} -\frac{\zeta^\prime (2)}{\zeta(2)}\bigg) +O (N^{-1}\log^2 N) ,
\end{equation*}
and so for every $\theta >1$,
\begin{equation}\label{eq5.10}
S_2^{\e} (N) -S_2^{\e} \bigg( \frac{N}{\theta}\bigg) =\frac{2\log \theta}{3\zeta(2)} +O (N^{-1}\log^2 N) .
\end{equation}

\section{Distribution of $B$-reduced quadratic irrationals}\label{sect6}
In this section we prove a precise asymptotic formula for the cardinality of the set $\SSS_B(\alpha,\beta;N)$ considered at the end of Section \ref{sect4}.
First, we will introduce some number theoretical tools that will play a central role in the proof.

For $\Omega \subset \R^2$ and $q,h\geq 1$, denote
\begin{equation*}
\NN_{q,h} (\Omega) := \sum\limits_{\substack{(u,v)\in \Omega \\ uv \equiv h \pmod{q}}} 1,\qquad \NN_q (\Omega) :=\NN_{q,1}(\Omega) .
\end{equation*}
Using Weil bounds for Kloosterman sums one can show (cf., e.g., \cite[Proposition A3]{BZ}) that, if $(h,q)=1$, then for every $I_1,I_2$ intervals,
\begin{equation}\label{eq6.1}
\NN_{q,h}(I_1\times I_2)=\frac{\varphi(q)}{q^2}\, \lvert I_1 \rvert \, \lvert I_2 \rvert
+O_\varepsilon \bigg( q^{1/2+\varepsilon} \Big( 1+\frac{\lvert I_1\rvert}{q}\Big)\Big( 1+\frac{\lvert I_2\rvert}{q}\Big) \bigg),\qquad \forall \varepsilon >0 .
\end{equation}
As long as $(h,q)=1$, the proof of Lemma 2 in \cite{Us} still works when replacing $\pm 1$ by $h$ and employing
$(q,m,nh)=(q,m,n)$, and one gets

\begin{lemma}\label{lem29}
Let $q$, $h$ be integers, $q\geq 2$, $(h,q)=1$. For every integer $c$ and interval $I$ with $\vert I\rvert <q$,
consider the linear function $f(x)=c\pm x$ such that $f(I) \subseteq [0,q]$. Then
\begin{equation*}
\NN_{q,h} (\{ (x,y)\mid x\in I,\ 0\leq y\leq f(x) \})=\frac{\varphi(q)}{q^2} \int_I f(x)\, dx +O_\varepsilon (q^{1/2+\varepsilon}),
\qquad \forall \varepsilon >0 .
\end{equation*}
\end{lemma}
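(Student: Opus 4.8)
The plan is to follow the argument of Lemma 2 in \cite{Us}, counting the points over the variable $x$ and extracting the main term from the innermost count over $y$. Since $(h,q)=1$, the congruence $xy\equiv h\pmod q$ forces $(x,q)=1$, and for each such $x$ the admissible $y$ lie in the single residue class $y\equiv h\bar x\pmod q$, where $\bar x$ denotes the inverse of $x$ modulo $q$. Because $f(I)\subseteq[0,q]$, the inner count $\#\{\,0\le y\le f(x):y\equiv h\bar x\ (q)\,\}$ equals, up to an $O(1/q)$ handled exactly as in \cite{Us}, the quantity $\frac{f(x)}{q}-\psi\!\big(\frac{f(x)-h\bar x}{q}\big)+\psi\!\big(\frac{-h\bar x}{q}\big)$, where $\psi(t)=\{t\}-\tfrac12$ is the sawtooth function. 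Summing over $x\in I$ with $(x,q)=1$ then splits the count into a main term $\frac1q\sum_{x\in I,(x,q)=1}f(x)$ and two sawtooth sums.

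For the main term I would insert $\mathbf 1[(x,q)=1]=\sum_{d\mid(x,q)}\mu(d)$ and sum the linear function $f$ along each arithmetic progression $\{x\in I:d\mid x\}$; since $f$ has slope $\pm1$ and $0\le f\le q$ on $I$, each progression contributes $\frac1d\int_I f+O(q)$, so that
\begin{equation*}
\frac1q\sum_{\substack{x\in I\\(x,q)=1}}f(x)=\frac{1}{q}\sum_{d\mid q}\frac{\mu(d)}{d}\int_I f(x)\,dx+O\!\Big(\frac{\tau(q)\,q}{q}\Big)=\frac{\varphi(q)}{q^2}\int_I f(x)\,dx+O_\varepsilon(q^\varepsilon),
\end{equation*}
using $\sum_{d\mid q}\mu(d)/d=\varphi(q)/q$. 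This already produces the stated main term with an error well inside $O_\varepsilon(q^{1/2+\varepsilon})$.

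The heart of the matter, and the step I expect to be the main obstacle, is bounding the sawtooth sums $\sum_{x\in I,(x,q)=1}\psi(\frac{h\bar x}{q})$ and $\sum_{x\in I,(x,q)=1}\psi(\frac{f(x)-h\bar x}{q})$. Here I would replace $\psi$ by a truncated Fourier (or Vaaler) approximation $\psi(t)=-\sum_{1\le|k|\le K}\frac{e(kt)}{2\pi i k}+(\text{small})$, with $e(t):=e^{2\pi it}$. After detecting coprimality and completing the resulting incomplete sums over the short interval $I$ (legitimate because $\lvert I\rvert<q$), the frequency-$k$ contributions become Kloosterman sums in which the multiplier $h$ enters only through $h\bar x$; the identity $(q,m,nh)=(q,m,n)$, valid since $(h,q)=1$, shows that the greatest common divisors, and hence the Weil bound $\lvert S(m,n;q)\rvert\le\tau(q)\,(m,n,q)^{1/2}q^{1/2}$, are exactly the same as in the case $h=\pm1$ treated in \cite{Us}. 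Summing the Weil estimate over $1\le|k|\le K$ and optimizing the truncation length $K$ then yields $O_\varepsilon(q^{1/2+\varepsilon})$ for each sawtooth sum, which combines with the main term to give the assertion. The one point requiring genuine care is that the second sum involves both $x$ and $\bar x$ (through $f(x)=c\pm x$ together with $h\bar x$), so it produces a true Kloosterman sum rather than a Ramanujan sum, and the full strength of the Weil bound is needed there.
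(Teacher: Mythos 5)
Your proposal is correct and matches the paper's approach exactly: the paper proves this lemma simply by invoking the proof of Lemma 2 in \cite{Us} with $\pm 1$ replaced by $h$, noting that since $(h,q)=1$ the identity $(q,m,nh)=(q,m,n)$ keeps the Weil bound for the resulting Kloosterman sums unchanged --- which is precisely the key point you isolate. Your write-up just spells out the details of that Ustinov argument (sawtooth decomposition, M\"obius for the main term, Vaaler truncation and completion for the error), so there is nothing to add.
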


First, we estimate $\NN_{p,-1}(\Omega^-_p (\alpha,\beta;N))$, where
\begin{equation}\label{eq6.2}
\Omega_p^- (\alpha,\beta;N):=\bigg\{ (u,v)\ \bigg| \ u\geq \beta p,\  0\leq v\leq\frac{p}{\alpha} ,\
u-v\leq N \bigg\} .
\end{equation}

When $p>\frac{\alpha N}{\alpha \beta -1}$ we have $\beta p > N+\frac{p}{\alpha}$, and so $\Omega_p^- (\alpha,\beta;N)=\emptyset$.
When $p\leq \frac{\alpha N}{\alpha \beta-1}$ we have $\beta p \leq N+\frac{p}{\alpha}$, giving
\begin{equation}\label{eq6.3}
\operatorname{Area} (\Omega^-_p (\alpha,\beta;N)) = \begin{cases}
(N-\beta p)\frac{p}{\alpha}+\frac{p^2}{2\alpha^2} & \mbox{\rm if $0<p\leq \frac{N}{\beta}$} \\
\frac{1}{2} ( N+\frac{p}{\alpha}-\beta p)^2 & \mbox{\rm if $\frac{N}{\beta} \leq p\leq \frac{\alpha N}{\alpha\beta-1}.$} \end{cases}
\end{equation}

When $p\leq \frac{N}{\beta}$, apply estimate (\ref{eq6.1}) with  $I_1\times I_2 =[\beta p,N] \times [0,\frac{p}{\alpha}]$,
and Lemma \ref{lem29} with $f(x)=x-N$ and $I=[N,N+\frac{p}{\alpha}]$, of length $\frac{p}{\alpha}\leq p$,  together with (\ref{eq6.3}). For the case
$\frac{N}{\beta} \leq p \leq \frac{\alpha N}{\alpha \beta-1}$, apply estimate (\ref{eq6.1}) with
$I_1\times I_2 =[\beta p,N+ \frac{p}{\alpha}] \times [0,\frac{p}{\alpha}]$ and Lemma \ref{lem29} with $f(x)=x-N$ and
$I=[\beta p ,N+\frac{p}{\alpha}]$, of length $\leq \frac{p}{\alpha}$, together with (\ref{eq6.3}), to get

\begin{equation}\label{eq6.4}
\begin{split}
& \lvert \SSS_B  (\alpha,\beta;N)\rvert = \sum\limits_{1\leq p\leq \frac{\alpha N}{\alpha\beta -1}} \NN_{p,-1} (\Omega^-_p (\alpha,\beta;N)) \\ & =
\sum\limits_{1\leq p \leq \frac{ N}{\beta}} \frac{\varphi (p)}{p^2}
\bigg( (N-\beta p)\, \frac{p}{\alpha} +\frac{p^2}{2\alpha^2} \bigg)  +\sum\limits_{\frac{N}{\beta} < p \leq \frac{\alpha N}{\alpha\beta -1}}
\frac{\varphi(p)}{2p^2} \bigg( N+\frac{p}{\alpha}-\beta p\bigg)^2 +O_\varepsilon (N^{3/2+\varepsilon})  \\
& \quad =\frac{N}{\alpha}\, S_1 \bigg( \frac{N}{\beta}\bigg) -\frac{\beta}{\alpha}\, S_0 \bigg( \frac{N}{\beta}\bigg)
+\frac{1}{2\alpha^2} \, S_0 \bigg( \frac{N}{\beta}\bigg)
+\frac{N^2}{2}\bigg( S_2 \bigg( \frac{\alpha N}{\alpha\beta-1}\bigg) -S_2 \bigg( \frac{N}{\beta}\bigg) \bigg) \\
& \qquad +\frac{(1-\alpha\beta)^2}{2\alpha^2} \bigg( S_0 \bigg( \frac{\alpha N}{\alpha\beta -1}\bigg) -S_0 \bigg( \frac{N}{\beta}\bigg)\bigg) \\ & \qquad
+\frac{(1-\alpha\beta)N}{\alpha} \bigg( S_1 \bigg( \frac{\alpha N}{\alpha\beta-1}\bigg) -S_1 \bigg( \frac{N}{\beta}\bigg) \bigg)
+O_\varepsilon (N^{3/2+\varepsilon}) .
\end{split}
\end{equation}

Combining \eqref{eq6.4} with \eqref{eq5.1} we infer
\begin{equation}\label{eq6.5}
\begin{split}
\lvert \SSS_{\b} (\alpha,\beta;N)\rvert & = \frac{N^2}{2\zeta(2)} \, \log \bigg(\frac{\alpha\beta}{\alpha\beta-1}\bigg) +O_\varepsilon (N^{3/2+\varepsilon}) \\
& =\frac{N^2}{2\zeta(2)} \iint_{[\alpha,\infty) \times [0,\frac{1}{\beta}]} \frac{du\, dv}{(u-v)^2} +O_\varepsilon (N^{3/2+\varepsilon}) .
\end{split}
\end{equation}

Theorem \ref{thm4} follows from \eqref{eq6.5} and the approximation arguments\footnote{For the purpose of this
approximation $e_i=-1$ for all $i$. Also, the $a_i$'s are not necessarily even, but only $a_i\geq 2$ is needed.} in Section \ref{approx},
taking $N=e^{R/2}$.
Corollary \ref{cor5} follows taking $\beta=1$.

\section{Estimating the cardinality of the sets $\SSS_\pm (\alpha,\beta;N)$}\label{sect7}

To the end of parameterizing $\SSS(\alpha,\beta;N)$ defined in Subsection \ref{sub2.3}, write
\begin{equation*}
\SSS(\alpha,\beta;N)= \SSS_{+}(\alpha,\beta;N) \cup \SSS_{-}(\alpha,\beta;N),
\end{equation*}
where $\SSS_{i}(\alpha,\beta;N)= \SSS(\alpha,\beta;N) \cap \{e=i \}$, $i= \pm 1$, are disjoint sets.

We start by proving the asymptotic formula
\begin{equation}\label{eq7.1}
\lvert \SSS_{+}(\alpha,\beta;N)\rvert =\frac{N^2}{6\zeta(2)} \log \bigg(\frac{\alpha\beta+1}{\alpha\beta}\bigg) +O_\varepsilon (N^{3/2+\varepsilon}) .
\end{equation}
For every $N\geq 2$, $\alpha,\beta \geq 1$ consider the sets
\begin{equation*}
\begin{split}
\AAA_1 (\alpha,\beta;N) & := \left\{ (m,u,v) \  \bigg| \   \begin{matrix} & u\geq \beta m,\
0\leq v\leq \frac{m}{\beta},\ u+v \leq N  \\ & m\, \operatorname{odd},\  uv\equiv 1 \hspace{-0.8pt} \pmod{m},\ u,v \, \operatorname{even}\end{matrix}  \right\} , \\
\AAA_2 (\alpha,\beta;N) & := \left\{ (m,u,v) \  \bigg| \  \begin{matrix} & u\geq \beta m,\
0\leq v\leq \frac{m}{\alpha},\  u+v \leq N  \\ & m\, \operatorname{even},\  uv\equiv 1 \hspace{-0.8pt} \pmod{2m},\ u,v \, \operatorname{odd}\end{matrix}  \right\}.
\end{split}
\end{equation*}
The last condition in the definition of $\AAA_2(\alpha,\beta;N)$ is obsolete as $uv\equiv 1 \pmod{2m}$ implies $u,v$ odd.

\begin{lemma}\label{lem30}
The map
\begin{equation*}
\Phi:\SSS_{+}(\alpha,\beta;N) \rightarrow \AAA_1(\alpha,\beta;N)\cup \AAA_2(\alpha,\beta;N),\quad\Phi \left( \begin{matrix} p^\prime & p \\
q^\prime & q \end{matrix} \right) := (p, p^\prime ,q)
\end{equation*}
is a bijection.
\end{lemma}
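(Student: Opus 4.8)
The plan is to prove the bijection by exhibiting an explicit two-sided inverse. Given a triple $(m,u,v)$ in the target set, the congruence $uv\equiv1\pmod m$ guarantees that $q':=\frac{uv-1}{m}$ is an integer, so I would define $\Psi(m,u,v):=\left(\begin{smallmatrix} u & m \\ q' & v\end{smallmatrix}\right)$ and show that $\Psi$ maps $\AAA_1(\alpha,\beta;N)\cup\AAA_2(\alpha,\beta;N)$ into $\SSS_+(\alpha,\beta;N)$ and is a two-sided inverse of $\Phi$. The determinant is automatic: $\det\Psi(m,u,v)=uv-mq'=1$, so $\Psi(m,u,v)\in\operatorname{SL}(2,\Z)$, with $e=+1$, which is consistent with the form $\left(\begin{smallmatrix} p' & p \\ q' & q\end{smallmatrix}\right)$ that $\Phi$ reads off.

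First I would verify that $\Phi$ is well defined. For $\sigma=\left(\begin{smallmatrix} p' & p \\ q' & q\end{smallmatrix}\right)\in\SSS_+(\alpha,\beta;N)$ (so $e=+1$), writing $(m,u,v)=(p,p',q)$, the chain $p'>p>q>0$ gives $u>m>v>0$; the bound $\operatorname{Tr}(\sigma)=p'+q\le N$ gives $u+v\le N$; and $p\ge\alpha q$, $p'\ge\beta p$ translate into the upper bound on $v$ and the lower bound $u\ge\beta m$ recorded in the definitions. The parity dichotomy is the crux: if $\sigma\equiv I_2\pmod 2$ then $m=p$ is even while $u,v$ are odd, and since the entry $q'$ is even we get $uv-1=mq'\equiv0\pmod{2m}$, i.e.\ $uv\equiv1\pmod{2m}$, placing $\Phi(\sigma)\in\AAA_2$; if $\sigma\equiv J_2\pmod 2$ then $m$ is odd, $u,v$ are even, and $\det\sigma=1$ gives $uv=mq'+1\equiv1\pmod m$, placing $\Phi(\sigma)\in\AAA_1$.

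Injectivity is immediate, since $q'=\frac{uv-1}{m}$ is recovered from $\Phi(\sigma)=(m,u,v)$, so $\sigma$ is determined. For surjectivity I would run the verification for $\Psi$: integrality of $q'$ comes from $uv\equiv1\pmod m$; the parity of $q'$ (even in the $\AAA_2$ case, by $uv\equiv1\pmod{2m}$, and odd in the $\AAA_1$ case automatically, since $uv$ is even and $m$ is odd) yields $\Psi(m,u,v)\equiv I_2$ resp.\ $J_2\pmod2$. The strict inequalities are then forced by parity together with the congruences: $u\ge\beta m\ge m$ and $v\le m/\alpha\le m$ become \emph{strict} because $u$ and $v$ have parity opposite to $m$, so $u\ne m\ne v$; the congruence with $m\ge2$ forces $v\ge1$, hence $v>0$ and $uv>1$, giving $q'>0$; $q'<u$ follows from $v<m$; and $q'>v$ is equivalent to $v(u-m)>1$. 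A direct check then gives $\Phi\circ\Psi=\mathrm{id}$ and $\Psi\circ\Phi=\mathrm{id}$.

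The main obstacle I anticipate lies not in the determinant or the parity bookkeeping, which are routine, but in establishing the strict chain $p'>q'>q$ (that is $u>q'>v$) for $\Psi$, since it does \emph{not} follow from the defining inequalities of $\AAA_1$ and $\AAA_2$ alone and must be extracted from the congruences. The delicate step is $q'>v$: here $v(u-m)=1$ can only occur for $v=1,\ u=m+1$, and this is ruled out in $\AAA_2$ because $uv=m+1\equiv1\pmod{2m}$ would force $m\equiv0\pmod{2m}$, and ruled out in $\AAA_1$ because there $v$ is even. A secondary point is to exclude the degenerate triples with $m=1$ (which would give $q'=v=0$); since $p\ge2$ in $\SSS$, one should read the definition of $\AAA_1$ with $m\ge2$, so that $uv\equiv1\pmod m$ indeed forces $v>0$.
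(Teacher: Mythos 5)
Your proposal is correct and follows essentially the same route as the paper's proof: both invert $\Phi$ explicitly by setting $q^\prime=\frac{uv-1}{m}$ (forced by $\det=1$), and both verify membership through the parity dichotomy ($\sigma\equiv J_2\pmod 2$ corresponding to $\AAA_1$, $\sigma\equiv I_2\pmod 2$ to $\AAA_2$) together with the inequality chain $v<m<u$. If anything you are more careful than the paper, whose proof only records $q\leq\frac{uv-1}{m}=q^\prime<p^\prime$ and thus glosses over both the strictness $q^\prime>q$ required in the definition of $\SSS$ (your $v(u-m)>1$ argument, ruled out by parity in $\AAA_1$ and by the mod $2m$ congruence in $\AAA_2$) and the degenerate triples with $m=1$, $v=0$, which you correctly flag as needing to be excluded for the map to be onto.
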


\begin{proof}
The map $\Phi$ is well defined because $\sigma=\left( \begin{smallmatrix} p^\prime & p \\ q^\prime & q \end{smallmatrix}\right)
\equiv I_2$ or $J_2 \pmod{2}$ entails $p$ odd $\Longrightarrow$ $p^\prime,q$ even and $p$ even $\Longrightarrow$ $p^\prime ,q$ odd.
We show that for every $(m,u,v)\in\AAA_1 (\alpha,\beta;N) \cup \AAA_2 (\alpha,\beta;N)$, there exists a unique
$\sigma=\left( \begin{smallmatrix} p^\prime & p \\ q^\prime & q \end{smallmatrix}\right)
\in \SSS_{+}(\alpha,\beta;N)$ such that $\Phi(\sigma)=(m,u,v)$.

Suppose first $(m,u,v)=(p,p^\prime,q) \in \AAA_1(\alpha,\beta;N)$. We must have $p^\prime q-pq^\prime =1$, or, equivalently,
$q^\prime :=\frac{uv-1}{p}$. Since $uv -1$ is odd, we have that $q^\prime $ odd, so $\sigma \equiv J_2 \pmod{2}$.

When $(m,u,v)=(p,p^\prime,q)\in \AAA_2(\alpha,\beta;N)$, we similarly have $q^\prime:=\frac{uv-1}{p}$. The condition $uv \equiv 1\pmod{2p}$ gives $\sigma \equiv I_2 \pmod{2}$.

In both situations we have $v<m<u$, so $v=q\leq \frac{uv-1}{m}=q^\prime < u=p^\prime$.
\end{proof}

As a corollary we obtain
\begin{equation*}
\begin{split}
A_1(\alpha,\beta;N):=\lvert \AAA_1 (\alpha,\beta;N) \rvert & =\sum\limits_{\substack{p\geq 1 \\ p\, \operatorname{odd}}}
\sum\limits_{\substack{u\geq \beta p,\, 0\leq v\leq \frac{p}{\alpha} \\ u+v \leq N \\ uv \equiv 1 \pmod{p} \\
u,v\, \operatorname{even}, \,\frac{uv-1}{p}\, \operatorname{odd}}} 1
=\sum\limits_{\substack{p\geq 1 \\ p\, \operatorname{odd}}}
\sum\limits_{\substack{(u,v)\in\Omega_p^+(\alpha,\beta;N) \\ uv \equiv 1 \pmod{p} \\
u,v\, \operatorname{even}, \,\frac{uv-1}{p}\, \operatorname{odd}}} 1 \quad \mbox{\rm and}
 \\
A_2(\alpha,\beta;N):= \lvert \AAA_2(\alpha,\beta;N)\rvert &
= \sum\limits_{\substack{p\geq 1 \\ p\, \operatorname{even}}}
\sum\limits_{\substack{u\geq \beta p,\ 0\leq v\leq \frac{p}{\alpha} \\ u+v\leq N \\
uv \equiv 1 \pmod{2p}}} 1
= \sum\limits_{\substack{p\geq 1 \\ p\, \operatorname{even}}}
\sum\limits_{\substack{(u,v) \in \Omega_p^+(\alpha,\beta;N) \\
uv \equiv 1 \pmod{2p}}} 1,
\end{split}
\end{equation*}
where we consider the region
\begin{equation*}
\Omega_p^+ (\alpha,\beta;N):= \bigg\{ (u,v)\ \bigg|\
u\geq \beta p,\  0\leq v\leq \frac{p}{\alpha},\ u+v\leq N \bigg\} .
\end{equation*}

When $p>\frac{N}{\beta}$ we have $\Omega^+_p (\alpha,\beta;N)=\emptyset$.
When $p\leq \frac{N}{\beta}$ we have
\begin{equation}\label{eq7.2}
\operatorname{Area} (\Omega^+_p (\alpha,\beta;N)) =\begin{cases}
(N-\beta p)\frac{p}{\alpha} -\frac{p^2}{2\alpha^2} & \mbox{\rm if $0\leq p \leq \frac{\alpha N}{\alpha\beta+1}$} \\
\frac{(N-\beta p)^2}{2} & \mbox{\rm if $\frac{\alpha N}{\alpha\beta+1} \leq p\leq \frac{N}{\beta}$} .
\end{cases}
\end{equation}

Writing $u= 2a, \ v= 2b$, we have that $(u,v) \in \Omega_p^+ (\alpha,\beta;N)\Longleftrightarrow (a,b) \in \tfrac{1}{2} \Omega_p^+ (\alpha,\beta;N)$,
and that $\tfrac{uv-1}{p}= \tfrac{4ab-1}{p} \text{ is odd}
\Longleftrightarrow 4ab-1 \equiv p \pmod{2p} \Longleftrightarrow 4ab \equiv p+1 \pmod{2p} \Longleftrightarrow 2ab \equiv \tfrac{p+1}{2} \pmod{p} \Longleftrightarrow ab \equiv
\overline{2} \cdot  \tfrac{p+1}{2} \pmod{p}$, where $\overline{2} \cdot 2 \equiv 1 \pmod{p}$, so that
  \begin{equation*}
A_1(\alpha,\beta;N) = \sum_{\substack{1\leq p\leq \frac{N}{\beta} \\ p\, \operatorname{odd}}} \mathcal{N}_{p,\overline{2}
 \cdot \tfrac{p+1}{2}} ( \tfrac{1}{2}\Omega_p^+ (\alpha,\beta;N)), \qquad
A_2 (\alpha,\beta;N)  = \sum_{\substack{1\leq p\leq \frac{N}{\beta} \\ p\, \operatorname{even}}} \mathcal{N}_{2p}(\Omega_p^+ (\alpha,\beta;N)).
\end{equation*}

First, we estimate $A_1 (\alpha,\beta;N)$.
Here $p$ is odd,  $(\frac{p+1}{2},p)=1$,
$(\overline{2},p)=1$, so $(\overline{2}\cdot \frac{p+1}{2},p) =1$.
When $p\leq \frac{\alpha N}{\alpha\beta+1}$, we apply estimate \eqref{eq6.1} with
$I_1 \times I_2 =[\beta p,N-\frac{p}{\alpha}]\times [0,\frac{p}{\alpha}]$ and Lemma \ref{lem29} with
$f(x)=N-x$ and $I=[N-\frac{p}{\alpha},N]$, of length $\frac{p}{\alpha}\leq p$, together with \eqref{eq7.2}.
For the case $\frac{\alpha N}{\alpha\beta+1} \leq p\leq \frac{N}{\beta}$, we apply Lemma \ref{lem29} with
$f(x)=N-x$ and $I=[\beta p,N]$, of length $\leq \frac{p}{\alpha}$, together with \eqref{eq7.2}, to get
\begin{equation}\label{eq7.3}
\begin{split}
& A_1(\alpha,\beta;N) =\frac{1}{4} \sum\limits_{\substack{1\leq p\leq \frac{N}{\beta} \\ p\, \operatorname{odd}}}
\frac{\varphi(p)}{p^2}\, \operatorname{Area} (\Omega_p^+ (\alpha,\beta;N)) \\
& =\frac{1}{4} \sum\limits_{\substack{1\leq p\leq \frac{\alpha N}{\alpha\beta+1} \\ p\, \operatorname{odd}}}
\frac{\varphi(p)}{p^2} \bigg( (N-\beta p) \frac{p}{\alpha} -\frac{p^2}{2\alpha^2} \bigg)
+\frac{1}{4} \sum\limits_{\substack{\frac{\alpha N}{\alpha\beta+1} \leq p\leq \frac{N}{\beta} \\ p\, \operatorname{odd}}}
\frac{\varphi(p)}{p^2} \cdot \frac{(N-\beta p)^2}{2} +O_\varepsilon (N^{3/2+\varepsilon}) \\
& \qquad = \frac{N}{4\alpha} \, S_1^O \bigg( \frac{\alpha N}{\alpha\beta+1}\bigg) -\frac{\beta}{4\alpha} \, S_0^O \bigg( \frac{\alpha N}{\alpha\beta+1}\bigg)
-\frac{1}{8\alpha^2}\, S_0^O \bigg( \frac{\alpha N}{\alpha \beta+1}\bigg) \\
& \qquad \qquad + \frac{N^2}{8} \bigg( S_2^O \bigg( \frac{N}{\beta}\bigg) -S_2^O \bigg( \frac{\alpha N}{\alpha \beta+1}\bigg) \bigg)
-\frac{\beta N}{4} \bigg( S_1^O \bigg( \frac{N}{\beta}\bigg) -S_1^O \bigg( \frac{\alpha N}{\alpha\beta+1}\bigg) \bigg) \\
& \qquad \qquad +\frac{\beta^2}{8} \bigg( S_0^O \bigg( \frac{N}{\beta}\bigg) -S_0^O \bigg( \frac{\alpha N}{\alpha\beta+1}\bigg)\bigg)
+O_\varepsilon (N^{3/2+\varepsilon}) .
\end{split}
\end{equation}
Combining \eqref{eq7.3} with \eqref{eq5.2}, \eqref{eq5.3} and \eqref{eq5.9}, we infer after a short calculation
\begin{equation}\label{eq7.4}
A_1 (\alpha,\beta) =\frac{N^2}{12\zeta(2)} \log \bigg(\frac{\alpha \beta +1}{\alpha\beta}\bigg) +O_\varepsilon (N^{3/2+\varepsilon}).
\end{equation}

Next, we estimate $A_2 (\alpha,\beta;N)$.
In this case we have
\begin{equation*}
\begin{split}
& A_2(\alpha,\beta;N) =\frac{1}{4} \sum\limits_{\substack{1\leq p\leq \frac{N}{\beta} \\ p\, \operatorname{even}}}
\frac{\varphi(2p)}{p^2}\, \operatorname{Area} (\Omega_p^+ (\alpha,\beta;N)) \\
& =\frac{1}{4} \sum\limits_{\substack{1\leq p\leq \frac{\alpha N}{\alpha\beta+1} \\ p\, \operatorname{even}}}
\frac{\varphi(2p)}{p^2} \bigg( (N-\beta p) \frac{p}{\alpha} -\frac{p^2}{2\alpha^2} \bigg)
+\frac{1}{4} \sum\limits_{\substack{\frac{\alpha N}{\alpha\beta+1} \leq p\leq \frac{N}{\beta} \\ p\, \operatorname{even}}}
\frac{\varphi(2p)}{p^2} \cdot \frac{(N-\beta p)^2}{2} +O_\varepsilon (N^{3/2+\varepsilon}) \\
& \qquad = \frac{N}{4\alpha} \, S_1^E \bigg( \frac{\alpha N}{\alpha\beta+1}\bigg) -\frac{\beta}{4\alpha} \, S_0^E \bigg( \frac{\alpha N}{\alpha\beta+1}\bigg)
-\frac{1}{8\alpha^2}\, S_0^E \bigg( \frac{\alpha N}{\alpha \beta+1}\bigg) \\
& \qquad \qquad + \frac{N^2}{8} \bigg( S_2^E \bigg( \frac{N}{\beta}\bigg) -S_2^E \bigg( \frac{\alpha N}{\alpha \beta+1}\bigg) \bigg)
-\frac{\beta N}{4} \bigg( S_1^E \bigg( \frac{N}{\beta}\bigg) -S_1^E \bigg( \frac{\alpha N}{\alpha\beta+1}\bigg) \bigg) \\
& \qquad \qquad +\frac{\beta^2}{8} \bigg( S_0^E \bigg( \frac{N}{\beta}\bigg) -S_0^E \bigg( \frac{\alpha N}{\alpha\beta+1}\bigg)\bigg)
+O_\varepsilon (N^{3/2+\varepsilon}) .
\end{split}
\end{equation*}
Combining this with \eqref{eq5.3}, \eqref{eq5.5} and \eqref{eq5.10}, we infer
\begin{equation}\label{eq7.5}
A_2 (\alpha,\beta) =\frac{N^2}{12\zeta(2)} \log \bigg(\frac{\alpha \beta +1}{\alpha\beta}\bigg) +O_\varepsilon (N^{3/2+\varepsilon}).
\end{equation}

The estimate \eqref{eq7.1} follows from \eqref{eq7.4} and \eqref{eq7.5}.

To estimate $\lvert \SSS_{-}(\alpha,\beta;N)\rvert$, consider the region $\Omega_p^- (\alpha,\beta;N)$ defined by \eqref{eq6.2}
and employ $p^\prime q-pq^\prime =-1$, which gives $p^\prime q\equiv -1 \pmod{p}$.
We first observe as above that
\begin{equation*}
\begin{split}
\lvert \SSS_{-}(\alpha,\beta;N)\rvert & =
\sum\limits_{\substack{1\leq p\leq \frac{\alpha N}{\alpha\beta-1} \\ p\, \operatorname{odd}}}
\NN_{p, \overline{2}\cdot \frac{p-1}{2}} (\tfrac{1}{2} \Omega_p^- (\alpha,\beta;N))
+ \sum\limits_{\substack{1\leq p\leq \frac{\alpha N}{\alpha\beta-1} \\ p\, \operatorname{even}}}
\NN_{2p,-1} (\Omega^-_p (\alpha,\beta;N)) \\
& =: B_1 (\alpha,\beta;N) +B_2(\alpha,\beta;N) .
\end{split}
\end{equation*}

Proceeding exactly as in Section \ref{sect6} and as in
the estimation for $A_1(\alpha,\beta;N)$ and $A_2(\alpha,\beta;N)$ above, the only difference being $uv \equiv -1 \pmod{p}$ in place of $uv\equiv 1\pmod{p}$, we show that
\begin{equation*}
\begin{aligned}
B_1 (\alpha,\beta;N) & =\frac{N^2}{12\zeta(2)} \log \bigg(\frac{\alpha\beta}{\alpha\beta-1}\bigg)+O_\varepsilon (N^{3/2+\varepsilon}),\\
B_2(\alpha,\beta;N)&  = \frac{N^2}{12\zeta(2)} \log \bigg(\frac{\alpha\beta}{\alpha\beta-1}\bigg) +O_\varepsilon (N^{3/2+\varepsilon}),
\end{aligned}
\end{equation*}
and therefore
\begin{equation}\label{eq7.6}
\lvert \SSS_{-}(\alpha,\beta;N)\rvert = \frac{N^2}{6\zeta(2)} \log \bigg(\frac{\alpha\beta}{\alpha\beta-1}\bigg)
+O_\varepsilon (N^{3/2+\varepsilon}).
\end{equation}

Combining \eqref{eq7.1} and \eqref{eq7.6}, we get
\begin{equation}\label{eq7.7}
\lvert \SSS_- (\alpha,\beta_1;N)\rvert+\lvert \SSS_+ (\alpha,\beta_2;N)\rvert =
C(\alpha,\beta_1,\beta_2) N^2 +O_\varepsilon (N^{3/2+\varepsilon}),
\end{equation}
with $C(\alpha,\beta_1,\beta_2)$ as in Theorem \ref{thm1}.

\section{Distribution of ECF-reduced quadratic irrationals}\label{approx}
This section completes the proofs of Theorems \ref{thm1} and \ref{thm4} through a careful analysis of the error resulted while
approximating $r_E(\alpha,\beta_1,\beta_2;R)$ by
$\lvert \SSS_- (\alpha,\beta_1;N)\rvert +\lvert \SSS_+(\alpha,\beta_2;N)\rvert$ when $N=e^{R/2}\rightarrow \infty$.

First, we show that the error resulting from replacing the spectral radius of $\widetilde{\Omega}_E (\omega)$ by
the trace is negligible.

\begin{lemma}\label{lem31}
${\mathfrak r} (\widetilde{\Omega}_E (\omega)^k) < \operatorname{Tr} (\widetilde{\Omega}_E (\omega)^k) \leq {\mathfrak r} (\widetilde{\Omega}_E (\omega)^k) +\frac{1}{2}$,
$\quad\forall \omega \in \RRR_{\e}, \forall k\geq 1$.
\end{lemma}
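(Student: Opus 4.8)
The plan is to reduce both inequalities to the eigenvalue structure of $\widetilde{\Omega}_E(\omega)$ and then exploit the integrality of the trace. First I would record that $\det(\widetilde{\Omega}_E(\omega))=+1$: when $\delta_n=+1$ this is $\det(\Omega_E(\omega))=\delta_n=+1$ by \eqref{eq2.1}, and when $\delta_n=-1$ it is $\det(\Omega_E(\omega)^2)=(-1)^2=+1$. Consequently $M:=\widetilde{\Omega}_E(\omega)^k\in\operatorname{SL}(2,\Z)$ for every $k\geq 1$, so in particular $\operatorname{Tr}(M)\in\Z$. As established after Lemma \ref{lem8}, the spectral radius $\lambda:={\mathfrak r}(\widetilde{\Omega}_E(\omega))=q_\ell\omega+q_{\ell-1}e_\ell>1$ is a real eigenvalue of $\widetilde{\Omega}_E(\omega)$, and since the determinant equals $1$ the second eigenvalue is $\lambda^{-1}\in(0,1)$. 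Raising to the $k$-th power, the eigenvalues of $M$ are $\lambda^k$ and $\lambda^{-k}$, so ${\mathfrak r}(M)=\lambda^k>1$ and $\operatorname{Tr}(M)={\mathfrak r}(M)+{\mathfrak r}(M)^{-1}$.

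The left inequality ${\mathfrak r}(M)<\operatorname{Tr}(M)$ is then immediate from ${\mathfrak r}(M)^{-1}>0$. For the right inequality it suffices to show ${\mathfrak r}(M)\geq 2$, since that yields ${\mathfrak r}(M)^{-1}\leq\frac12$ and hence $\operatorname{Tr}(M)={\mathfrak r}(M)+{\mathfrak r}(M)^{-1}\leq{\mathfrak r}(M)+\frac12$. Here I would invoke integrality: because ${\mathfrak r}(M)>1$ we have $\operatorname{Tr}(M)={\mathfrak r}(M)+{\mathfrak r}(M)^{-1}>2$, and since $\operatorname{Tr}(M)$ is an integer this forces $\operatorname{Tr}(M)\geq 3$. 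Solving ${\mathfrak r}(M)+{\mathfrak r}(M)^{-1}\geq 3$ under ${\mathfrak r}(M)>1$ gives ${\mathfrak r}(M)\geq\frac{3+\sqrt 5}{2}>2$, which is slightly stronger than required and in fact delivers the strict bound ${\mathfrak r}(M)^{-1}<\frac12$.

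There is no serious obstacle here: the entire content is the observation that the trace of the determinant-one integer matrix $M$ is an integer exceeding $2$, hence at least $3$. The only points requiring a word of justification are the determinant computation that places $\widetilde{\Omega}_E(\omega)$—and therefore all its powers—in $\operatorname{SL}(2,\Z)$, and the fact that its eigenvalues are real with product $1$, both of which are already available from \eqref{eq2.1} and the spectral radius formula derived after Lemma \ref{lem8}.
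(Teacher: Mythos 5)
Your proof is correct, but the crucial step is handled by a genuinely different mechanism than in the paper. Both arguments share the same setup: by the discussion following Lemma \ref{lem8}, the matrix $\widetilde{\Omega}_E(\omega)$ fixes $\omega$, has determinant $+1$ (since $\delta_\ell=+1$), and has real eigenvalues $\eta=q_\ell\omega+q_{\ell-1}e_\ell>1$ and $\eta^{-1}$, so $\operatorname{Tr}(\widetilde{\Omega}_E(\omega)^k)=\eta^k+\eta^{-k}$ and everything reduces to showing $\eta^{-k}\leq\frac12$. The paper gets this by proving $\eta\geq 2$ from the ECF digit structure: the recursion gives $q_n-q_{n-1}\geq q_{n-1}-q_{n-2}\geq\cdots\geq q_1-q_0=1$, and at least one inequality is strict, since otherwise all $e_i=-1$, all $a_i=2$, forcing $\omega=[\,\overline{(2,-1)}\,]=1$, a contradiction. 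You instead exploit integrality: $\widetilde{\Omega}_E(\omega)^k\in\operatorname{SL}(2,\Z)$ has integer trace, and $\eta^k+\eta^{-k}>2$ forces $\operatorname{Tr}\geq 3$, hence ${\mathfrak r}(\widetilde{\Omega}_E(\omega)^k)\geq\frac{3+\sqrt5}{2}$ and ${\mathfrak r}^{-1}<\frac12$. Your route is purely arithmetic, never touches the continued fraction recursion, applies verbatim to any hyperbolic element of $\operatorname{SL}(2,\Z)$ with positive trace, and yields a strictly stronger (and strict) bound; the paper's route is more structural and produces the explicit statement $\eta\geq 2$ for the base matrix, which is reused immediately in Corollary \ref{cor32} to get $k\leq\log_2 N$. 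Note that your argument applied with $k=1$ gives $\eta\geq\frac{3+\sqrt5}{2}>2$, so that corollary (and everything downstream) survives intact under your approach.
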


\begin{proof}
First, we show $\eta:={\mathfrak r} (\widetilde{\Omega}_E (\omega)) \geq 2$.
We have ${\mathfrak r} (\Omega_E (\omega)) =q_n \omega +q_{n-1}e_n \geq q_n-q_{n-1}$ and
\begin{equation*}
q_n-q_{n-1} \geq q_{n-1}-q_{n-2} \geq \cdots \geq q_1 -q_0 =1.
\end{equation*}
At least one of the inequalities above is strict, or else $e_1=\cdots =e_n=-1$ and $a_1=\cdots =a_n=2$,
giving $\omega =[\overline{(2,-1)}]=1$, contradiction. We infer ${\mathfrak r} (\Omega_E (\omega)) \geq 2$. The inequality $\eta \geq 2$ follows
replacing $n$ by $2n$ when $\delta_n=-1$.
This leads to $\eta^k < \operatorname{Tr} (\widetilde{\Omega}_E (\omega)^k) =\eta^k +\eta^{-k} \leq \eta^k +\eta^{-1} \leq \eta^k+\frac{1}{2}$.
\end{proof}

\begin{cor}\label{cor32}
$\TT_k(N)=\emptyset$ whenever $k>\log_2 N$.
\end{cor}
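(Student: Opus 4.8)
The plan is to read this off directly from Lemma \ref{lem31}, which already isolates the only nontrivial ingredient, namely the uniform lower bound ${\mathfrak r}(\widetilde{\Omega}_E(\omega)) \geq 2$. Writing $\eta := {\mathfrak r}(\widetilde{\Omega}_E(\omega))$, I would first recall why the trace of a power of $\widetilde{\Omega}_E(\omega)$ is controlled by $\eta$: since $\det(\Omega_E(\omega)) = \delta_n$ by \eqref{eq2.1}, the matrix $\widetilde{\Omega}_E(\omega)$ has determinant $+1$ in both cases of \eqref{eq1.8} (it equals $\Omega_E(\omega)$ when $\delta_n = +1$, and $\Omega_E(\omega)^2$ when $\delta_n = -1$, with determinant $\delta_n^2 = +1$). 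Hence its two eigenvalues are $\eta$ and $\eta^{-1}$, both positive, and those of $\widetilde{\Omega}_E(\omega)^k$ are $\eta^k$ and $\eta^{-k}$, so that $\operatorname{Tr}(\widetilde{\Omega}_E(\omega)^k) = \eta^k + \eta^{-k}$; this is exactly the identity already used in the proof of Lemma \ref{lem31}.

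Combining this with $\eta \geq 2$ from Lemma \ref{lem31} gives, for every $\omega \in \RRR_{\e}$ and every $k \geq 1$, the chain
\begin{equation*}
\operatorname{Tr}(\widetilde{\Omega}_E(\omega)^k) = \eta^k + \eta^{-k} \geq \eta^k \geq 2^k .
\end{equation*}
Now suppose $(\omega,k) \in \TT_k(N)$. By the definition of $\TT_k(N) = \TT_k(1,1,1;N)$, this forces $\operatorname{Tr}(\widetilde{\Omega}_E(\omega)^k) \leq N$, and the previous display then yields $2^k \leq N$, i.e. $k \leq \log_2 N$. Taking the contrapositive, if $k > \log_2 N$ then no admissible pair $(\omega,k)$ can exist, so $\TT_k(N) = \emptyset$.

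I do not anticipate any genuine obstacle here: the whole corollary is an immediate consequence of the estimate $\eta \geq 2$ established in Lemma \ref{lem31} together with the elementary eigenvalue bookkeeping $\operatorname{Tr}(\sigma^k) = \eta^k + \eta^{-k}$ valid for any determinant-one $2\times 2$ matrix $\sigma$ with positive spectral radius $\eta$. The only point worth stating explicitly is that the constraints $\omega \geq 1$ and $\omega^* \in [-1,1]$ built into $\TT_k(N)$ are automatic for $\omega \in \RRR_{\e}$, so the effective restriction is solely the trace inequality $\operatorname{Tr}(\widetilde{\Omega}_E(\omega)^k) \leq N$, which is precisely what the bound $2^k \leq N$ contradicts once $k > \log_2 N$.
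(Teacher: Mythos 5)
Your proof is correct and follows essentially the same route as the paper: both arguments combine the bound $\eta = {\mathfrak r}(\widetilde{\Omega}_E(\omega)) \geq 2$ from Lemma \ref{lem31} with the identity $\operatorname{Tr}(\widetilde{\Omega}_E(\omega)^k) = \eta^k + \eta^{-k}$ to deduce $2^k \leq N$ from the trace constraint, hence $k \leq \log_2 N$. Your additional remarks (the determinant-one verification and the observation that the constraints $\omega \geq 1$, $\omega^* \in [-1,1]$ are vacuous for $\omega \in \RRR_E$) are correct bookkeeping that the paper leaves implicit.
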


\begin{proof}
Let $(\omega,k)\in\TT_k(N)$. We have $\eta^k < \eta^k+\eta^{-k} =\operatorname{Tr} (\widetilde{\Omega}_E (\omega)^k) \leq N$,
giving $k\leq \frac{\log N}{\log \eta} \leq \frac{\log N}{\log 2} =\log_2 N$.
\end{proof}

Denote
\begin{equation*}
T_k (\alpha,\beta_1,\beta_2;N)  :=\lvert \TT_k (\alpha,\beta_1,\beta_2;N)\rvert =
\sum\limits_{\substack{\omega \in \RRR_{\e} \\ \operatorname{Tr} (\widetilde{\Omega}_E (\omega)^k) \leq N \\
\omega \geq \alpha,\,-\frac{1}{\beta_2}\leq \omega^* \leq \frac{1}{\beta_1} }} 1 =T_1 (\alpha,\beta_1,\beta_2;N^{1/k}) .
\end{equation*}
Upon Lemma \ref{lem31} we have, with $r_E(\alpha,\beta_1,\beta_2;R)$ as in \eqref{eq1.9} and $N=e^{R/2}$,
\begin{equation}\label{eq8.1}
T_1 \bigg( \alpha,\beta_1,\beta_2;N-\frac{1}{2}\bigg) \leq r_{\e} (\alpha,\beta_1,\beta_2;R)\leq T_1 (\alpha,\beta_1,\beta_2;N) .
\end{equation}

Assume $\alpha,\beta_1,\beta_1 \geq 1$ with $\alpha \beta_1 >1$. Consider
\begin{equation*}
S (\alpha,\beta_1,\beta_2;N) :=\sum\limits_{k=1}^\infty T_k (\alpha,\beta_1,\beta_2;N) .
\end{equation*}
By Corollary \ref{cor20} we can write\footnote{Recall that $((a_1,e_1),\ldots ,(a_n,e_n))\in \WW_E^+ $ and $\omega:= [ \, \overline{(a_1,e_1),\ldots, (a_n,e_n)}\,]$
imply $\widetilde{\Omega}_E (\omega)=\Omega_E (\omega)$.}
\begin{equation}\label{eq8.2}
S (\alpha,\beta_1,\beta_2;N) =
\vert \WW_E^+(\alpha, \beta_1, \beta_2;N) \vert  =S_-(\alpha,\beta_1;N)+S_+(\alpha,\beta_2;N),
\end{equation}
with $S_-(\alpha,\beta_1;N)$ collecting the contribution of terms with $\omega^* \in (0, \frac{1}{\beta_1}]$ and
$S_+ (\alpha,\beta_2;N)$ collecting the contribution of terms with $\omega^* \in [-\frac{1}{\beta_2},0)$.

By Corollary \ref{cor32} and $T_k (\alpha,\beta_1,\beta_2;N) \ll N^{2/k}$ we infer\footnote{Note that the set $\SSS_+ (1,1;N)$ is finite, but
$\SSS_-(1,1;N)$ and $\SSS_B (1,1;N)$ may be a priori infinite.}
\begin{equation}\label{eq8.3}
\begin{split}
S (\alpha,\beta_1,\beta_2;N) &
=T_1 (\alpha,\beta_1,\beta_2;N)+O \bigg( \sum\limits_{2\leq k\leq \log_2 N} N^{2/k}\bigg) \\
& =T_1 (\alpha,\beta_1,\beta_2;N) +O(N\log N) .
\end{split}
\end{equation}

\begin{lemma}\label{lem33}
{\em (i)} $\displaystyle \Big| \Big\{ \sigma=\Big( \begin{smallmatrix} p^\prime & ep \\ q^\prime & eq \end{smallmatrix}\Big) \in \SSS_+\
\Big\vert \  q(q^\prime +eq)\leq N,\   p^\prime \leq AN\Big\}\Big| = O_{A,\varepsilon}(N^{3/2+\varepsilon})$.\vspace{0.2cm}

{\em (ii)} $\displaystyle \Big| \Big\{ \sigma=\Big( \begin{smallmatrix} p^\prime & -p \\ q^\prime & -q \end{smallmatrix}\Big) \in \SSS_+\  \Big| \
p(p-q)\leq N,\     p^\prime \leq AN \Big\} \Big| = O_{A,\varepsilon}(N^{3/2+\varepsilon})$.\vspace{0.2cm}
\end{lemma}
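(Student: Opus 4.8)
The plan is to establish both bounds by a single lattice-point counting scheme, and I expect the resulting estimate to be in fact $O_A(N^{3/2})$, slightly stronger than the stated $O_{A,\varepsilon}(N^{3/2+\varepsilon})$. Throughout I discard the congruence conditions $\sigma\equiv I_2$ or $J_2\pmod 2$ defining $\SSS$, since they only shrink the sets and so can only decrease the counts. In each part I use the relation $\det(\sigma)=+1$ to pin down two of the four entries of $\sigma$ in terms of the other two: the admissible solutions then lie on a single arithmetic progression whose length is capped by $p'\le AN$, while the remaining ``hyperbola'' constraint ($q(q'+eq)\le N$ in (i), $p(p-q)\le N$ in (ii)) controls the two free entries. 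Summing and splitting the outer variable at $\sqrt N$ will produce the bound.

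For (i), fix $e\in\{ \pm 1\}$ and the bottom-row entries $q,q'$ with $q'>q\ge 1$ and $q(q'+eq)\le N$. Writing $\det(\sigma)=e(p'q-pq')=1$ in the form $p'q-pq'=e$, the integer solutions $(p',p)$ form the progression $(p'_0,p_0)+s(q',q)$, $s\in\Z$ (empty unless $\gcd(q,q')=1$), along which $p'$ advances in steps of $q'$; hence the number of admissible $p'\in(0,AN]$ is at most $AN/q'+1$, and $p$ is then determined. The contribution of the $+1$ is $O(N\log N)$, since the number of pairs $(q,q')$ with $q(q'+eq)\le N$ is $O(N\log N)$ by the divisor (hyperbola) bound. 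For the remaining term I sum $AN/q'$ over $q'$: for fixed $q'$ the number of admissible $q$ is trivially $\le q'$, and is also $\ll_A N/q'$ once $q'>\sqrt N$ (the constraint then confines $q$ to two short tails, using $q'<p'\le AN$). Splitting at $\sqrt N$,
\[
\sum_{q'\le\sqrt N}\frac{AN}{q'}\cdot q'\ +\ \sum_{q'>\sqrt N}\frac{AN}{q'}\cdot\frac{N}{q'}\ \ll_A\ N^{3/2}\ +\ N^2\!\!\sum_{q'>\sqrt N}\frac{1}{q'^2}\ \ll_A\ N^{3/2}.
\]

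Part (ii) is the mirror image, with the roles of rows and columns interchanged. Here $e=-1$ and $\det(\sigma)=pq'-p'q=1$. Fixing $p,q$ with $p>q\ge1$ and $p(p-q)\le N$, the solutions $(q',p')$ lie on a progression of $p'$-step $p$, so the number of admissible $p'\le AN$ is $O(AN/p+1)$; the $+1$ again contributes $O(N\log N)$. For the main term, $p(p-q)\le N$ confines $q$ to the interval $(p-N/p,\,p)$, whose integer count is $\le\min(p,\,N/p+1)$, so that
\[
\sum_{p}\frac{AN}{p}\,\min\!\Big(p,\frac{N}{p}+1\Big)\ \ll_A\ \sum_{p\le\sqrt N}AN\ +\ \sum_{p>\sqrt N}\frac{AN^2}{p^2}\ \ll_A\ N^{3/2}.
\]

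The one delicate point --- and the crux of the estimate --- is the contribution of small $q'$ in (i) (resp.\ small $p$ in (ii)), where the determinant equation admits up to $\asymp AN/q'$ solutions $p'$ and naively threatens a bound of size $N^2$. This is exactly offset by the hyperbola constraint, which for small $q'$ confines $q$ to only $O(q')$ values rather than $O(N/q')$; the product of the two effects saturates precisely at the threshold $N^{3/2}$, and splitting the outer sum at $\sqrt N$ is what makes this balance explicit. This is the same mechanism governing the main terms in Sections \ref{sect6} and \ref{sect7}, so beyond bookkeeping the two signs $e=\pm1$ and the two congruence classes of $\sigma$ I anticipate no further obstacle.
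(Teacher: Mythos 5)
Your proof is correct, and it takes a genuinely different route from the paper's. The paper's argument is divisor-based: in (i) with $e=+1$ it notes that $q(q'+q)\le N$ forces $q\le \sqrt N$, fixes the pair $(p',q)$ (at most roughly $AN\cdot\sqrt N$ choices), and uses the determinant relation $pq'=p'q-1$ to conclude that $p$ runs over divisors of $p'q-1$, hence over $O_{A,\varepsilon}(N^{\varepsilon})$ values, with $q'$ then determined; the case $e=-1$, $q\ge\sqrt N$ is handled symmetrically via $k:=q'-q\le\sqrt N$ and the identity $(p'-p)q=pk-1$, and part (ii) likewise (fix $p,q'$, resp.\ $\ell:=p-q$ and $q'$, and apply the divisor bound again). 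You instead fix the bottom row $(q',q)$ (resp.\ the pair $(p,q)$ in (ii)), observe that the top rows solving the determinant equation form an arithmetic progression with $p'$-step $q'$ (resp.\ $p$) inside $(0,AN]$, and then balance the resulting factor $AN/q'$ against the number of admissible $q$, which the constraint limits to $\ll\min(q',N/q')$; the split at $\sqrt N$ finishes. This trades the divisor-function bound for elementary lattice-point counting, and it actually yields the sharper estimate $O_A(N^{3/2})$ with no $N^{\varepsilon}$ loss --- a cleaner statement, though immaterial for Theorems \ref{thm1} and \ref{thm4}, whose error terms are dominated by the Kloosterman-sum estimates; what the paper's argument buys in exchange is brevity. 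Two small inaccuracies in your write-up, neither fatal: for $e=-1$ the bound $\#\{q\}\ll N/q'$ when $q'>\sqrt N$ comes from the two roots of the downward parabola $q\mapsto q(q'-q)$ (together with the trivial bound $\#\{q\}\le q'\le 2\sqrt N\ll N/q'$ in the range $\sqrt N<q'\le 2\sqrt N$), not from $q'<p'\le AN$ as your parenthesis suggests; and in (ii) your displayed sum silently replaces $N/p+1$ by $N/p$ for $p>\sqrt N$, dropping a cross term $\sum_{\sqrt N<p\le AN}AN/p\ll_A N\log N$, which is harmless but should be accounted for.
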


\begin{proof} (i) When $e=+1$ we get $q\leq \sqrt{N}$ and $pq^\prime =p^\prime q-1 < AN^{3/2}$.
Fix $p^\prime$ and $q$. The number of admissible values for $p$ is at most the number of divisors of $p^\prime q-1$,
so it is $O_{A,\varepsilon} (N^\varepsilon)$. Hence the number of $\sigma$'s is $O_{A,\varepsilon}(N^{3/2+\varepsilon})$.

When $e=-1$ we consider two cases:

(i$_1$) $q\leq \sqrt{N}$. Fixing $p^\prime$ and $q$, the number of admissible values for $p$ is at most the number of divisors of
$p^\prime q+1$, so it is again $O_{A,\varepsilon}(N^\varepsilon )$.

(i$_2$) $q\geq \sqrt{N}$. In this case $0< k:=q^\prime -q \leq \frac{N}{q}\leq \sqrt{N}$ and
$(p^\prime -p)q =pk-1$. Fixing $p$ and $k$, the number of admissible values for $q$ is $O_{A,\varepsilon}(N^\varepsilon)$ as above,
and $p$, $k$ and $q$ completely determine $\sigma$.

(ii) The proof is similar. Consider first $p\leq \sqrt{N}$ and fix $p$ and $q^\prime$, which limits the number of admissible values for
$p^\prime$ to $O_{A,\varepsilon} (N^\varepsilon)$. In the second case $p\geq \sqrt{N}$ gives
$0< \ell:=p-q\leq \frac{N}{p}\leq \sqrt{N}$. We proceed as in (i$_2$). Fix $\ell$ and $q^\prime$ and  observe
that the equality $q(p^\prime -q^\prime)+1=\ell q^\prime$ limits the number of admissible values for $q$ to $O_{A,\varepsilon}(N^\varepsilon)$.
\end{proof}

Employing equality \eqref{eq2.5} and $\Te^n (\omega)=\omega$, we get
\begin{equation}\label{eq8.4}
\bigg| \omega -\frac{p_{n-1}}{q_{n-1}} \bigg| =  \bigg| \frac{e_n p_{n-1}+\omega p_n}{e_n q_{n-1} +\omega q_n} -\frac{p_{n-1}}{q_{n-1}}\bigg| =
\bigg| \frac{e_n}{q_{n-1} (q_n +\frac{e_n}{\omega} q_{n-1})}\bigg| \leq \frac{1}{q_{n-1}(q_n -q_{n-1})}.
\end{equation}
Employing equality \eqref{eq2.7}, we get
\begin{equation}\label{eq8.5}
\bigg| -\frac{1}{\omega^*} -\frac{e_n p_n}{p_{n-1}} \bigg| =
\bigg| \frac{\omega^* e_n q_n -e_n p_n}{\omega^* q_{n-1} -p_{n-1}} -\frac{e_n p_n}{p_{n-1}} \bigg|
=\bigg| \frac{\omega^*}{p_{n-1}(p_{n-1}-\omega^* q_{n-1})}\bigg| \leq \frac{1}{p_{n-1}(p_{n-1}-q_{n-1})}  .
\end{equation}
Furthermore, we have $\omega > \frac{p_{n-1}}{q_{n-1}}$ when $e_n=+1$ and $\omega < \frac{p_{n-1}}{q_{n-1}}$ when $e_n=-1$.
When $e_n=+1$ we also have $-\frac{1}{\omega^*} > \frac{p_n}{p_{n-1}}$, so $\frac{p_n}{p_{n-1}} \geq \beta_2  \Longrightarrow
0> -\omega^* \geq -\frac{1}{\beta_2}$, while when $e_n=-1$ we have $\frac{1}{\omega^*} > \frac{p_n}{p_{n-1}}$, so
$\frac{p_n}{p_{n-1}} \geq \beta_1 \Longrightarrow 0 < \omega^* \leq \frac{1}{\beta_1}$.

Lemma \ref{lem33} will be applied with $A=1$ (making the error term independent of $\beta_2$) for $e=+1$, and with
$A=1+\frac{\alpha}{\alpha\beta_1 -1}$ when $e=-1$.

Combining Lemma \ref{lem33} and \eqref{eq8.2}, \eqref{eq8.4}, \eqref{eq8.5}, \eqref{eq7.7}, we infer
\begin{equation}\label{eq8.6}
\begin{split}
S (\alpha,\beta_1, \beta_2;N) & \leq \sum\limits_{\substack{\sigma \in \SSS_{-} (\alpha,\beta_1-\frac{1}{N};N)\cup \SSS_{+}(\alpha-\frac{1}{N},\beta_2-\frac{1}{N};N) \\
\min\{ q(q^\prime -q),p(p-q)\} \geq N}} 1 +O_{\alpha,\beta_1,\varepsilon} (N^{3/2+\varepsilon}) \\ &
\leq \lvert \SSS_{-} (\alpha,\beta_1-\tfrac{1}{N};N)\rvert
+\lvert \SSS_+ (\alpha-\tfrac{1}{N},\beta_2-\tfrac{1}{N};N)\rvert  +O_{\alpha,\beta_1,\varepsilon} (N^{3/2+\varepsilon}) \\
& =C(\alpha,\beta_1,\beta_2) N^2 + O_{\alpha,\beta_1,\varepsilon}(N^{3/2+\varepsilon}) ,
\end{split}
\end{equation}
and also
\begin{equation}\label{eq8.7}
\begin{split}
S (\alpha,\beta_1, \beta_2;N) & \geq
\sum\limits_{\substack{\sigma \in \SSS_{-} ( \alpha+\frac{1}{N},\beta_1;N)
\cup \SSS_+ ( \alpha,\beta_2;N) \\  \min\{ q(q^\prime -q),p(p-q)\} \geq N }} 1  \\ &
=\lvert \SSS_{-} ( \alpha+\tfrac{1}{N},\beta_1; N ) \rvert
+\lvert \SSS_{+} ( \alpha,\beta_2; N ) \rvert + O_{\alpha,\beta_1,\varepsilon}(N^{3/2+\varepsilon}) \\
& =C(\alpha,\beta_1,\beta_2) N^2 + O_{\alpha,\beta_1,\varepsilon}(N^{3/2+\varepsilon}) .
\end{split}
\end{equation}
Taking $N=e^{R/2}$ and combining \eqref{eq8.1}, \eqref{eq8.3}, \eqref{eq8.6} and \eqref{eq8.7}, we infer
\begin{equation*}
r_E (\alpha,\beta_1,\beta_2;R)=C(\alpha,\beta_1,\beta_2)e^R +O_{\alpha,\beta_1,\varepsilon} (e^{(3/4+\varepsilon)R}) .
\end{equation*}
This proves Theorem \ref{thm1}.

The analysis of the error resulting while
approximating $r_B (\alpha,\beta;R)$ by
$\lvert \SSS_B (\alpha,\beta;N)\rvert $ is similar.
In this case all $e_i$'s are equal to $-1$. This completes the proof of Theorem \ref{thm4}.

\section*{Appendix}
To illustrate the difference between $E$-reduced QIs, $B$-reduced QIs and reduced QIs, we consider a few examples.
We set $a=2k$, $a_1=2k_1$, $a_2=2k_2$, with $k,k_1,k_2 \in \N$.

Here we denote
$$
[b_1,b_2,b_3,\ldots ]:=b_1+\frac{1}{b_2+\cfrac{1}{b_3+\ldots}},\qquad b_i \in \N.
$$
We say that the QI $\omega >1$ is (regular) reduced if $\omega^* \in (-1,0)$.
It is well-known that this is equivalent with $\omega=[\,\overline{b_1,\ldots,b_d}\,]$ for
some $d\geq 1$ and $b_1,\ldots,b_d \in \N$.

\begin{Ex1*}
$\omega =[\, \overline{(a,-1)}\,]=a-\frac{1}{\omega}$ has minimal polynomial $X^2-2kX +1$ and
$\operatorname{disc}(\omega)=4(k^2-1)$.

The $E$-reduced QI
$\omega=k+\sqrt{k^2-1}>1$ is not reduced as $\omega^*=k-\sqrt{k^2-1} \in [0,1]$.

The largest eigenvalue of $\Omega_E(\omega)=\left( \begin{smallmatrix} a & -1 \\ 1 & 0 \end{smallmatrix}\right)$ is
${\mathfrak r}(\Omega_E(\omega))=k+\sqrt{k^2-1}$. Here $\widetilde{\Omega}_E(\omega)=\Omega_E(\omega)^2$ and
so $\varrho_E(\omega)=4\log (k+\sqrt{k^2-1})$.
\end{Ex1*}

\begin{Ex2*}
$\omega =[\, \overline{(a_1,1),(a_2,-1)}\,]=a_1+\frac{1}{a_2-\frac{1}{\omega}}$ has minimal polynomial
$k_2 X^2 -(1+2k_1k_2)X+k_1$ and $\operatorname{disc}(\omega)=4k_1^2k_2^2+1$.

The $E$-reduced QI
$\omega=\frac{2k_1k_2+1+\sqrt{4k_1^2k_2^2+1}}{2k_2}$ is not reduced as
$\omega^*=\frac{2k_1}{2k_1k_2+1+\sqrt{4k_1^2 k_2^2+1}} \in (0,1)$.

The largest eigenvalue of $\Omega_E(\omega)=\left( \begin{smallmatrix} a_1 & 1 \\ 1 & 0 \end{smallmatrix}\right)
\left( \begin{smallmatrix} a_2 & -1 \\ 1 & 0 \end{smallmatrix}\right)=
\left( \begin{smallmatrix} a_1 a_2+1 & -a_1 \\ a_2 & -1 \end{smallmatrix}\right)$ is
${\mathfrak r}(\Omega_E(\omega))=2k_1k_2+\sqrt{4k_1^2k_2^2+1}$. Here $\widetilde{\Omega}_E(\omega)=\Omega_E(\omega)^2$ and
so $\varrho_E(\omega)=4\log (2k_1k_2+\sqrt{4k_1^2k_2^2+1})$.
\end{Ex2*}

\begin{Ex3*}
$\omega =[\, \overline{(a_1,-1),(a_2,1)}\,]=a_1-\frac{1}{a_2+\frac{1}{\omega}}$ has minimal polynomial
$k_2 X^2 +(1-2k_1k_2)X-k_1$ and $\operatorname{disc}(\omega)=4k_1^2k_2^2+1$.

The $E$-reduced QI
$\omega=\frac{2k_1k_2-1+\sqrt{4k_1^2k_2^2+1}}{2k_2}$ is also reduced as
$\omega^*=\frac{-2k_1}{2k_1k_2-1+\sqrt{4k_1^2 k_2^2+1}} \in (-1,0)$.

The largest eigenvalue of $\Omega_E(\omega)=\left( \begin{smallmatrix} a_1 & -1 \\ 1 & 0 \end{smallmatrix}\right)
\left( \begin{smallmatrix} a_2 & 1 \\ 1 & 0 \end{smallmatrix}\right)=
\left( \begin{smallmatrix} a_1 a_2-1 & a_1 \\ a_2 & 1 \end{smallmatrix}\right)$ is
${\mathfrak r}(\Omega_E(\omega))=2k_1k_2+\sqrt{4k_1^2k_2^2+1}$. Here $\widetilde{\Omega}_E(\omega)=\Omega_E(\omega)^2$ and
so $\varrho_E(\omega)=4\log (2k_1k_2+\sqrt{4k_1^2k_2^2+1})$.

Finally notice that the equality
$$
\left( \begin{matrix} a_1 & -1 \\ 1 & 0 \end{matrix}\right) \left( \begin{matrix} a_2 & 1 \\ 1 & 0 \end{matrix}\right)=
\left( \begin{matrix} a_1-1 & 1 \\ 1 & 0 \end{matrix}\right) \left( \begin{matrix} 1 & 1 \\ 1 & 0 \end{matrix}\right)
\left( \begin{matrix} a_2-1 & 1 \\ 1 & 0 \end{matrix}\right)
$$
provides
$$
[\, \overline{(a_1,-1),(a_2,1)}\,] =[\, \overline{a_1-1,1,a_2-1}\,]=a_1-1+\frac{1}{1+\cfrac{1}{a_2-1+\cfrac{1}{a_1-1+\ldots}}}.
$$
In particular $[\, \overline{(2,-1),(2,1)}\,] =[\, \overline{1}\,]=\frac{1+\sqrt{5}}{2}=G$.
We have
$\Omega (G)=\left( \begin{smallmatrix} 1 & 1 \\ 1 & 0 \end{smallmatrix}\right)$, ${\mathfrak r}(\Omega (G))=G$, $\widetilde{\Omega}(G)=\Omega(G)^2$, and
$\varrho (G)=4\log (G)< \varrho_E (G) =4\log (2+\sqrt{5})$.
\end{Ex3*}

\begin{Ex4*}
$\omega =[\, \overline{(a_1,-1),(a_2,-1)}\,]=a_1-\frac{1}{a_2-\frac{1}{\omega}}$ with $a_1\neq a_2$ has minimal polynomial
$\ell_2 X^2 -2d\ell_1 \ell_2 X +\ell_1$ where $d:=(k_1,k_2)$, $k_1=\ell_1 d$, $k_2=\ell_2 d$,
$(\ell_1,\ell_2)=1$, and $\operatorname{disc}(\omega)=4\ell_1\ell_2 (\ell_1 \ell_2 d^2-1)$.

The $E$-reduced QI
$\omega =\ell_1 d+\sqrt{\ell_1^2 d^2 -\frac{\ell_1}{\ell_2}}$ is not
reduced as $\omega^*=\ell_1 d - \sqrt{\ell_1^2 d^2-\frac{\ell_1}{\ell_2}} \in (0,1)$.

The largest eigenvalue of $\Omega_E(\omega)=\left( \begin{smallmatrix} a_1 & -1 \\ 1 & 0 \end{smallmatrix}\right)
\left( \begin{smallmatrix} a_2 & -1 \\ 1 & 0 \end{smallmatrix}\right)=
\left( \begin{smallmatrix} a_1 a_2-1 & -a_1 \\ a_2 & -1 \end{smallmatrix}\right)$ is
${\mathfrak r}(\Omega_E(\omega))=2k_1k_2-1+2\sqrt{k_1k_2(k_1k_2-1)}$. Here $\widetilde{\Omega}_E(\omega)=\Omega_E(\omega)$ and
so $\varrho_E(\omega)=2\log (2k_1k_2-1+2\sqrt{k_1k_2(k_1k_2-1)})$.
\end{Ex4*}

\begin{Ex5*}
$\omega=\llb a,\overline{b}\,\rrb =a-\frac{b}{2}+\frac{\sqrt{b^2-4}}{2}$ with $a\geq 2$, $b>2$, $a\neq b$ has minimal polynomial $X^2 -(2a-b)X+a^2-ab+1$. Then
$\omega^* =a-\frac{b}{2}-\frac{\sqrt{b^2-4}}{2}$ and $\omega^* \notin (0,1)$, or else we get
$b+\sqrt{b^2-4} < 2a<b+\sqrt{b^2-4}$, which yields $a=b$ - contradiction.
\end{Ex5*}

\begin{Ex6*}
The $B$-reduced QI $\omega =\llb \, \overline{3,6}\,\rrb=\frac{3+\sqrt{7}}{2}$ is not reduced as $\omega^* \notin (-1,0)$.

The largest eigenvalue of $\Omega_B(\omega)=\left( \begin{smallmatrix} 3 & -1 \\ 1 & 0 \end{smallmatrix}\right)
\left( \begin{smallmatrix} 6 & -1 \\ 1 & 0 \end{smallmatrix}\right)=
\left( \begin{smallmatrix} 17 & -3 \\ 6 & -1 \end{smallmatrix}\right)$ is
${\mathfrak r}(\Omega_B(\omega)) =8+3\sqrt{7}$ and $\varrho_B(\omega) =2\log (8+3\sqrt{7})$.
Note also that $\sqrt{7}=3-\frac{1}{\omega}=\llb 3, \overline{3,6}\,\rrb$.
\end{Ex6*}

\section*{Acknowledgments}
The research of the second author was partially supported by a 2020 University of Illinois Supplemental Summer Block Grant.

\end{document}